\newcommand{\R}{\ensuremath{\mathbb{R}}}
\newcommand{\N}{\ensuremath{\mathbb{N}}}
\renewcommand{\Im}{{\rm Im~}}
\newcommand{\Id}{\mathrm{Id}}
\newcommand{\sgn}{\mathrm{sign}}
\DeclareMathOperator\arctanh{arctanh}
\def\p{\partial}
\def\e{\varepsilon}
\newtheorem {theorem} {Theorem}
\newtheorem {definition} {Definition}
\newtheorem {proposition}{Proposition}
\newtheorem {corollary}{Corollary}
\newtheorem {lemma}{Lemma}
\newtheorem {remark}{Remark}
\crefname{equation}{}{}
\Crefname{equation}{}{}
\crefname{section}{Section}{}
\crefname{proposition}{Proposition}{}
\crefname{theorem}{Theorem}{Theorems}
\crefname{lemma}{Lemma}{Lemmas}
\crefname{corollary}{Corollary}{Corollaries}
\crefname{definition}{Definition}{Definitions}
\begin{document}
\renewcommand{\arraystretch}{1.5}

\title{Averaging theory and catastrophes}
\author[Pedro C.C.R. Pereira, Mike R. Jeffrey, and Douglas D. Novaes]
{Pedro C.C.R. Pereira$^1$, Mike R. Jeffrey$^2$, and Douglas D. Novaes$^3$}

\address{School of Engineering Mathematics and Technology, University of Bristol, Ada Lovelace Building, Bristol BS8 1TW, UK$^2$}
\address{Universidade Estadual de Campinas (UNICAMP), Departamento de Matem\'{a}tica, Instituto de Matemática, Estatística e Computação Científica (IMECC) - Rua S\'{e}rgio Buarque de Holanda, 651, Cidade
Universit\'{a}ria Zeferino Vaz, 13083--859, Campinas, SP, Brazil$^{1,3}$}
\email{pedro.pereira@ime.unicamp.br$^1$}
\email{mike.jeffrey@bristol.ac.uk$^2$}
\email{ddnovaes@unicamp.br$^3$}

\keywords{Averaging theory, ordinary differential equations, bifurcation theory, catastrophe theory}

\subjclass[2020]{34C29, 37G10, 37G15, 58K35}

\begin{abstract}
When a dynamical system is subject to a periodic perturbation, the averaging method can be applied to obtain an autonomous leading order `guiding system', placing the time dependence at higher orders. Recent research focused on investigating invariant structures in non-autonomous differential systems arising from hyperbolic structures in the guiding system, such as periodic orbits and invariant tori. Complementarily, the effect that bifurcations in the guiding system have on the original non-autonomous one has also been recently explored, albeit less frequently. This paper extends this study by providing a broader description of the dynamics that can emerge from non-hyperbolic structures of the guiding system. Specifically, we prove here that $\mathcal{K}$-universal bifurcations in the guiding system `persist' in the original non-autonomous one, while non-versal bifurcations, such as the transcritical and pitchfork, do not. We illustrate the results on examples of a fold, a transcritical, a pitchfork, and a saddle-focus.
\end{abstract}

\maketitle

\section{Introduction}\label{sec:intro}

The method of averaging allows time-dependent singular perturbations of autonomous dynamical systems to be moved to higher orders in the perturbation parameter. The leading order term, sometimes called the {\it guiding} system, is then time-independent, but captures the average of the time-varying perturbation. Typically, if the perturbation is periodic, one can then show that equilibria of the guiding system constitute periodic orbits in the full system, see e.g. \cite{Guckenheimer1983,Sanders2007} and with more generality \cite{LliNovTei2014,NS21}. Other invariant structures have also been studied, for example periodic orbits of the time-independent system leading to invariant tori \cite{NP24,PEREIRA20231}. To describe what happens when bifurcations occur in the guiding system is a harder problem, and only solved for limited cases, such as a fold or Hopf bifurcation under certain conditions in \cite{CANDIDO20204555,Guckenheimer1983}. Bifurcation theory itself cannot generally be directly applied because of the singular nature of the systems. In this paper we provide the necessary theory to study whether bifurcations `persist' under averaging.

In essence here we will study systems of the form $\dot X=\e f(t,X,\mu,\e)$, where $\mu \in \R^k$ is some parameter. Such equations describe the effect of a time-varying perturbation $\e f(t,X,\mu,\e)$ near time-independent invariants, e.g. near the equilibrium $h=0$ of an autonomous system $\dot X =h(X)$, and these occur commonly, for example, in studying small perturbations of oscillators, particularly using Melnikov methods (see e.g. \cite{Guckenheimer1983,Sanders2007}). The quantity $X$ then varies on the timescale $t/\e$, making the perturbation singular over non-vanishing intervals of time $[0,t]$.
	
However, according to standard results of averaging theory \cite{Sanders2007}, after change of variables, we can write this for some $l\in\mathbb N$ as
\begin{equation} \label{eq:averagedsystemintro}
	\dot x=\e^\ell g_\ell(x,\mu)+\e^{\ell+1}R_\ell(t,x,\mu,\e),
\end{equation}
where the leading order of the perturbation is given as a regular autonomous perturbation. Through a time rescaling this becomes $\dot x = g_\ell(x,\mu) + \e R_\ell(t/\e^\ell,x,\mu,\e)$, so that time-dependence enters only as a perturbation (albeit singular) of an otherwise autonomous system. Here $f,g_\ell$, and $R_\ell$ are differentiable functions we will specify more completely later.

What happens when such a system undergoes a bifurcation has received relatively little attention. 
In \cite{Guckenheimer1983} it is shown that certain one-parameter bifurcations (a fold or a Hopf) of $\dot x = \e g_1(x,\mu)$ persist after being perturbed as above. Somewhat subtly, these results actually prove the existence of branches of equilibria or cycles around their bifurcation points, not of the bifurcations themselves, and moreover assume certain forms of system that are not entirely general. In \cite{CANDIDO20204555}, it is shown in more detail that a Hopf bifurcation in $\dot x = \e^\ell g_\ell(x,\mu)$ persists as a Neimark-Sacker bifurcation in the time-$T$ map of the original non-autonomous differential system, creating invariant tori.

Compared to bifurcations in averaging theory, the literature on other aspects of time-dependent perturbations of autonomous systems is extensive. Such investigations have been around since Poincar\'e's study of systems of the form $\dot u=g(u)+\e h(u,t,\e)$, from which are derived the origins of homoclinic tangles and chaos \cite{Poincare1890,Poincare1899,HOLMES1990137}. 
They remain novel today in multi-variable and multi-timescale problem, notably in models of neuron bursting via mixed-mode oscillations, see e.g. \cite{Krupa2008}. A simpler example is the singularly perturbed pendulum, $\ddot u=-\sin(u)+\e\sin(t/\e)$, e.g. \cite{Holmes1988}.

Here we will show indeed that a broad class of bifurcations of the leading order guiding system $\dot x = g_\ell(x,\mu)$ (in any number of parameters) `persists' when carried over to the original time-dependent system. 
We will use an idea from \cite{j22cat,j23bgconditions,j24cat} of looking only at the {\it catastrophe} underlying any bifurcation, which considers only the numbers of equilibria involved in a bifurcation (so-called $\mathcal{K}$-equivalence), taking no interest in topological equivalence classes. This provides an essential simplification making it possible to prove classes of bifurcation that do or do not `persist' under averaging.

Our interest will particularly be in families whose guiding systems exhibit non-hyperbolic equilibria that induce bifurcations, so that the rescaled form of \cref{eq:averagedsystemintro} can be, for instance, a simple fold point with a small singular time-dependent perturbation, written as $\dot x = x^2 + \e h(t/\e)$. 
A notable application of this is to seasonal differential models, i.e., differential systems with time varying parameters. For example, consider the family $\dot u=u^2 + p$ depending on a parameter $p$. What happens if, in fact, $p$ undergoes small time fluctuations? We will show that the average value $\mu_p$ of $p(t)$ can play the role of a bifurcation parameter whose variation precipitates catastrophes of periodic solutions.

We will also show that bifurcations that are not stable, except under restrictions such as symmetries important in applications, may nevertheless form stable systems under averaging. As an illustration, we will consider systems with transcritical and pitchfork bifurcations, as those appear frequently in the literature. We will show how, if one of these non-generic bifurcations appears in the guiding system, the addition of a time-varying singular perturbation generically produces stable bifurcations of periodic solutions in the averaged system.

The paper is arranged as follows: in \Cref{sec:results}, we present an overview of our main results, written as a practical summary for the non-specialist in either averaging or singularities and catastrophes. We add to this in \Cref{sec:examples} by illustrating applications to some simple examples, and particularly to the physical application of systems with time-varying parameters. The remainder of the paper contains the results presented more formally: in \Cref{sec:prelim}, we introduce known concepts extracted from both singularity theory and the averaging method that are needed to discuss and prove our results, before proving our main results in \Cref{sec:proofs}, and collecting a few auxiliary results of a more technical nature.

%%%%%%%%%%%%%%%%%%%%%%%%%%%%%%%%%%%%%%%%%%%%%%%%%%%%%%%%%%%%
%%%%%%%%%%%%%%%%%%%%%%%%%%%%%%%%%%%%%%%%%%%%%%%%%%%%%%%%%%%%

\section{Overview of results} \label{sec:results}

Consider a $(k+1)$-parameter family of $n$-dimensional systems in the form
\begin{equation} \label{eq:standardsystemintro}
\begin{aligned}
	\dot X &=  \sum_{i=1}^N \e^iF_i (t,X,\mu) + \e^{N+1} \tilde{F}(t,X,\mu,\e) \\
		&{\rm for}\quad  (t,X,\mu,\e) \in \R \times D \times \Sigma  \times (-\e_0,\e_0),
\end{aligned}
\end{equation}
where $D$ is an open, bounded neighbourhood of the origin in $\R^n$; $\Sigma$ is an open, bounded neighbourhood of $\mu_* \in \R^k$; $\e_0>0$;  $N \in \N^*$; and the functions $F_i$ and $\tilde{F}$ are of class $C^\infty$ in $\R \times D \times \Sigma \times (-\e_0,\e_0)$, and $T$-periodic in the variable $t$ in $\R \times \overline{D} \times \overline{\Sigma} \times[-\e_0,\e_0]$. 

We concern ourselves with $T$-periodic solutions of \cref{eq:standardsystemintro}. Let $X(t,t_0,X_0,\mu,\e)$ be the solution of this system satisfying $X(t_0,t_0,X_0,\mu,\e)=X_0$. Suppose that the parameters $(\mu,\e)$ are fixed. To find $T$-periodic solutions, we will study the so-called stroboscopic Poincaré map $\Pi$, which is defined by
\begin{equation}\label{eq:stroboscopicdefinition}
	\Pi(X_0,\mu,\e) = X(T,0,X_0,\mu,\e).
\end{equation}
Since all functions present in the system are $T$-periodic, a fixed point of the Poincaré map corresponds to a $T$-periodic solution of \cref{eq:standardsystemintro}. 

If we allow the parameters to vary, different maps emerge, giving birth to a $(k+1)$-parameter family of maps. In order to obtain a geometric picture of how the fixed points of $\Pi$ change as the parameters $(\mu,\e)$ vary, we define the catastrophe surface $M_\Pi$ of $\Pi$ as the set of triples $(X,\mu,\e)$ such that $\Pi(X,\mu,\e)=X$. 
\begin{definition} \label{definitionMpi}
	The catastrophe surface $M_\Pi$ of the Poincaré map $\Pi$ is defined by
	\begin{equation}
		M_\Pi := \{(X,\mu,\e) \in D \times \Sigma \times (-\e_0,\e_0) : \Pi(X,\mu,\e) = X\}.
	\end{equation}
\end{definition}
This definition is inspired by a similar concept appearing in Thom's catastrophe theory (see, for example, \cite{hayes1993catastrophe}). We remark that the term ``surface" is used only for reasons of custom, and does not imply that $M_\Pi$ is, globally or locally, a regular manifold in $D \times \Sigma \times (-\e_0,\e_0)$. We will see in \cref{maintheorem} below that typically $M_\Pi$ is not a manifold for the cases we will be treating. 

In this paper, we provide results locally characterising the catastrophe surface of $\Pi$ near bifurcation points for determinate classes of systems of the form \eqref{eq:standardsystemintro}. Crucially, the results show that the knowledge of an averaged form of the system - the so-called guiding system - is, in many instances, sufficient to fully describe $M_\Pi$. Essentially, we can infer in those cases that only the averaged effect of the time-dependent terms of \eqref{eq:standardsystemintro} alter the qualitative behaviour of $T$-periodic solutions.

We will need to distinguish between variables, bifurcation parameters, and perturbation parameters, and to do this we use the notion of {\it fibred maps} summarized in \Cref{sec:fibredmap}. We then give a brief introduction to the method of averaging in \Cref{sec:averagingintro}. These set up the main result in \Cref{sec:maintheorem}, showing what we define as `persistence' of catastrophes under averaging, and this is then refined to describe [non]-persistence of [non]-stable bifurcations in \Cref{sec:bifstab}-\ref{sec:bifnonstab}. Lastly, we give a comment on topological equivalence in \Cref{sec:topeq}.

%%%%%%%%%%%%%%%%%%%%%%%%%%%%%%%%%%%%%%%%%%%%%%%%%%%%%%%%%%%%
\subsection{Fibred maps}\label{sec:fibredmap}

%Characterizing the catastrophe surface is not a matter of geometry only. 
Below we will be studying how the geometry of a catastrophe surface is preserved under changes of coordinates, but we will also need to preserve the different roles variables versus bifurcation or perturbation parameters ($x,\mu,\e,$ respectively). 

Consider the simplest case $n=k=1$ (one variable and one bifurcation parameter), and compare the maps $X\mapsto \Pi_1(X,\mu,\e)=X+ X^2-\mu$ and $X \mapsto \Pi_2(x,\mu,\e) = \mu^2$. The catastrophe surfaces of those maps are, respectively, given by $M_1=\{(X,\mu,\e): X^2=\mu\}$ and $M_2:=\{(X,\mu,\e): \mu^2=X\}$. It is thus clear that $M_2$ can be obtained from $M_1$ from the rigid transformation of coordinates that rotates around the $\e$-axis by 90 degrees. Geometrically, thus, $M_1$ and $M_2$ are essentially identical. 
However, the different roles played by the coordinate $X$ and the parameters $\mu$ and $\e$, mean that $\Pi_1$ undergoes a fold bifurcation at $\mu=0$ and any $\e \in (-\e_0,\e_0)$, while $\Pi_2$ has exactly one fixed point $X^*=\mu^2$ for any pair $(\mu,\e)$. Hence, even though $M_1$ and $M_2$ are geometrically indistinguishable, the dynamics represented by them are certainly not equivalent.

This happens because the ambient space of $M_\Pi$ is the product between the space of coordinates and the space of parameters. Thus, if we want a tool that guarantees that two systems are dynamically related by comparing their catastrophe surfaces, then more than geometric properties alone, we also need the difference between parameters and coordinates to be preserved. We do this using the concept of fibred maps.

\begin{definition}
	Let $U \subset D \times \Sigma \times (-\e_0,\e_0)$ be a neighbourhood of the origin. In the context established in this paper, a map $\Phi=(\Phi_1,\Phi_2,\Phi_3): U \to \R^n \times \R^k \times \R$ is said to be:
	\begin{itemize}
		\item weakly fibred if it is of the form $\Phi(x,\mu,\e) = (\Phi_1(x,\mu,\e),\Phi_2(\mu,\e),\Phi_3(\mu,\e));$
		\item strongly fibred if it is of the form $\Phi(x,\mu,\e) = (\Phi_1(x,\mu,\e),\Phi_2(\mu,\e),\Phi_3(\e));$
		\item weakly or strongly fibred to the $m$-th order at $p \in U$ if its $m$-jet at $p$ is, respectively, weakly or strongly fibred.
	\end{itemize}
\end{definition}

For instance, when considering $M_1$ and $M_2$ as above, it is clear that, even though those surfaces are geometrically identical, no fibred diffeomorphism exists taking one into the other. 
Compare this to the map $X \mapsto \Pi_3(X,\mu,\e) = (X-\mu)^2 + X - \mu$, which still undergoes a fold bifurcation at $\mu=0$, but has a catastrophe surface $M_3= \{(X,\mu,\e) : (X-\mu)^2=\mu\}$, which can be obtained by transforming $M_1$ via the fibred diffeomorphism $\Phi(X,\mu,\e) = (X+\mu,\mu,\e)$.
This can be seen as a consequence of the assertion that fibred diffeomorphisms, by ensuring the separation of coordinates and parameters, preserve the dynamical aspects of the catastrophe surface. This admittedly somewhat vague assertion is simply a more explicit statement of ideas already present in the literature (see, for instance, the definitions of topological equivalence of families in \cite{Arnol'd1994,Kuznetsov2023}).

While weak fibration is sufficient to ensure the proper separation of coordinates and parameters, strong fibration is needed for applications such as averaging, when we need to distinguish between the perturbation parameter $\e$ and other parameters, in particular here we will usually assume $\e$ to be fixed, while studying the bifurcation family arising from varying $\mu$. 

Slightly altering the example we have already discussed, we consider the map $ X \mapsto \Pi_4(X,\mu,\e) = (X-\mu+\e)^2 +X-\mu+\e$, which, for each fixed $\e$, still has a fold as we vary $\mu$. Its catastrophe surface $M_4=\{(X,\mu,\e) : (X-\mu+\e)^2=\mu-\e\}$ is equal to the image of $M_1$ via the strongly fibred diffeomorphism $\Phi(X,\mu,\e) = (X+\mu,\mu+\e,\e)$.

It is easy to verify that the composition of two fibred maps is still fibred. It also holds that the inverse of a fibred diffeomorphism is itself fibred. These observations culminate in the following important result concerning the germs of fibred local diffeomorphisms, the proof of which can be found in \Cref{sec:fibred}.
\begin{proposition}\label{propositionfibredgroup}
	The class of germs of weakly fibred local diffeomorphisms near the origin is a group with respect to composition of germs, and so is the class of germs of strongly fibred local diffeomorphisms near the origin.  
\end{proposition}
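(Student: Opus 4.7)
The plan is to verify the three group axioms — closure, identity, and inverses — for both the weakly and strongly fibred settings, all in the category of germs of local diffeomorphisms near the origin. The identity map is clearly both weakly and strongly fibred, so the content lies in closure under composition and under taking inverses.

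For closure under composition, I would argue directly from the definition. Given $\Phi=(\Phi_1(x,\mu,\e),\Phi_2(\mu,\e),\Phi_3(\mu,\e))$ and $\Psi=(\Psi_1(x,\mu,\e),\Psi_2(\mu,\e),\Psi_3(\mu,\e))$ weakly fibred, the composition $\Psi\circ\Phi$ has first component $\Psi_1(\Phi_1(x,\mu,\e),\Phi_2(\mu,\e),\Phi_3(\mu,\e))$, depending on $(x,\mu,\e)$, and second and third components $\Psi_j(\Phi_2(\mu,\e),\Phi_3(\mu,\e))$ for $j=2,3$, depending only on $(\mu,\e)$. The strongly fibred case is identical, with the additional observation that if $\Phi_3$ depends only on $\e$ and $\Psi_3$ depends only on its last argument, then $\Psi_3\circ\Phi_3$ depends only on $\e$.

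The main obstacle — and the essential content of the proposition — is showing that inverses preserve the fibration structure. Here I would exploit the block lower-triangular shape of the Jacobian forced by the fibration. Writing $\Phi=(\Phi_1,\Phi_2,\Phi_3)$, the Jacobian at the origin has the form
\begin{equation*}
D\Phi=\begin{pmatrix} D_x\Phi_1 & D_\mu\Phi_1 & \partial_\e\Phi_1 \\ 0 & D_\mu\Phi_2 & \partial_\e\Phi_2 \\ 0 & D_\mu\Phi_3 & \partial_\e\Phi_3 \end{pmatrix},
\end{equation*}
whose invertibility is equivalent to the invertibility of $D_x\Phi_1$ together with the invertibility of the lower-right $(k+1)\times(k+1)$ block. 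Applying the inverse function theorem to the map $(\mu,\e)\mapsto(\Phi_2(\mu,\e),\Phi_3(\mu,\e))$ produces local inverses $\mu=\widetilde\Phi_2(\nu,\eta)$ and $\e=\widetilde\Phi_3(\nu,\eta)$ depending only on $(\nu,\eta)$. Substituting these back into $\Phi_1(x,\mu,\e)=y$ and solving for $x$ via the implicit function theorem (using invertibility of $D_x\Phi_1$) yields $x=\widetilde\Phi_1(y,\nu,\eta)$, so $\Phi^{-1}$ is weakly fibred.

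For the strongly fibred case, the added constraint $\Phi_3=\Phi_3(\e)$ reduces the lower-right block to an upper-triangular shape whose invertibility is equivalent to $D_\mu\Phi_2$ being invertible and $\partial_\e\Phi_3\neq 0$. Then $\Phi_3$ is itself a one-dimensional local diffeomorphism, giving $\e=\widetilde\Phi_3(\eta)$ depending only on $\eta$; plugging this into $\Phi_2(\mu,\widetilde\Phi_3(\eta))=\nu$ and solving for $\mu$ gives $\mu=\widetilde\Phi_2(\nu,\eta)$, after which $x$ is recovered as before. The resulting inverse satisfies the strong fibration condition. Closure, identity, and inverses together give the group structure in both cases.
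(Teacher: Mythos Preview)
Your proof is correct, but it takes a different route from the paper's. The paper first invokes the existence of the inverse $\Psi=\Phi^{-1}$ as a local diffeomorphism, and then deduces the fibred structure of $\Psi$ by differentiating the identities $\Psi_j\circ\Phi=\mathrm{pr}_j$ with respect to $x$ (and, in the strongly fibred case, also $\mu$) and using the nonsingularity of $D_x\Phi_1$, $D_\mu\Phi_2$, $\Phi_3'$ to kill the unwanted partial derivatives of $\Psi_2$ and $\Psi_3$. By contrast, you \emph{construct} the inverse from the bottom up: first invert the $(\mu,\e)$ block (or, in the strong case, first $\e$ then $\mu$), then solve for $x$ via the implicit function theorem, so the fibred form of $\Phi^{-1}$ is manifest by construction. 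Both approaches rest on the same block-triangular Jacobian observation; yours is more constructive and makes the triangular structure of the inversion transparent, while the paper's is slightly slicker in that it never needs to explicitly parametrise the inverse, only to rule out dependence on the wrong variables.
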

The concept of germs of local diffeomorphisms is introduced with more detail in \Cref{sec:germsandkequivalence}.

%%%%%%%%%%%%%%%%%%%%%%%%%%%%%%%%%%%%%%%%%%%%%%%%%%%%%%%%%%%%
\subsection{Averaging method and guiding system} \label{sec:averagingintro}

The averaging method allows us to simplify \cref{eq:standardsystemintro} by transforming it into a system that does not depend on time up to the $N$-th order of $\e$. More precisely, we are supplied with a smooth $T$-periodic change of variables $X \rightarrow x(t,X,\mu,\e)$ transforming \cref{eq:standardsystemintro} into
\begin{equation} \label{eq:completeaveragedsystemintro}
	\dot x = \sum_{i=1}^N \e^i g_i(x,\mu) + \e^{N+1} \, r_N (t,x,\mu,\e),
\end{equation}
where $r_N$ is $T$-periodic in $T$ and each of the functions on the right-hand side are smooth. The periodicity of this change of variables allows us to conclude that $T$-periodic solutions of \cref{eq:completeaveragedsystemintro} correspond one-to-one with $T$-periodic solutions of \cref{eq:standardsystemintro}. 

Further details about the transformation taking \cref{eq:standardsystemintro} into \cref{eq:completeaveragedsystemintro} will be provided in \Cref{sec:averagingpresentation} (in particular, \cref{lemmaaveragingtransform}), here we give just a brief overview of which elements of \cref{eq:completeaveragedsystemintro} will be used to deduce general properties of the catastrophe surface.

We obtain $g_1$ by
\begin{equation}
	g_1(x,\mu) = \frac{1}{T} \int_0^T F_1(t,x,\mu) dt,
\end{equation}
the average of $F_1$ over $t \in [0,T]$. If $g_1$ does not vanish identically, then $\ell =1$ and we are done. However, if $g_1 =0$, we proceed similarly, defining $g_2$ to be the average of an expression involving the functions $F_1$ and $F_2$ over $t \in[0,T]$. Once again, we have to check whether $g_2=0$. If not, $\ell=2$ and we are done, otherwise we have to continue in the same fashion. We do so until we reach the first $g_\ell$ that does not vanish identically. The expressions used to calculate the functions $g_i$ and other details about the transformation of variables taking \cref{eq:standardsystemintro} into \cref{eq:completeaveragedsystemintro} will be provided in \Cref{sec:averagingpresentation}.

Note that the change of variables provided by the averaging method is the identity for $t=0$, so that $M_\Pi$ can be identified with the catastrophe surface of the stroboscopic Poincaré map of \cref{eq:completeaveragedsystemintro}. Henceforth, we will always take into account this identification, since as a rule we will be analysing \cref{eq:completeaveragedsystemintro} instead of \cref{eq:standardsystemintro} directly.

Assume that at least one of the elements of $\{g_1,\ldots, g_{N-1}\}$ is non-zero and let $\ell \in \{1,\ldots, N-1\}$ be the first positive integer for which $g_\ell$ does not vanish identically. Then, \cref{eq:completeaveragedsystemintro} can be rewritten as
\begin{equation} \label{eq:completeellaveragedsystemintro}
	\dot x = \e^\ell g_\ell(x,\mu) + \e^{\ell+1} \, R_\ell (t,x,\mu,\e),
\end{equation}
where 
\begin{equation}\label{eq:definitionRell}
	R_\ell(t,x,\mu,\e) = \sum_{j=0}^{N-\ell-1} \e^j g_{j+\ell+1}(x,\mu) + \e^{N+1}r_N(t,x,\mu,\e).
\end{equation} 
The system $\dot x = g_\ell(x,\mu)$, obtained by truncating \cref{eq:completeellaveragedsystemintro} at the $\ell$-th order of $\e$ and rescaling time, is called the \textit{guiding system} of \cref{eq:standardsystemintro}. Our aim will be to infer properties of $M_\Pi$ from the singularity type appearing in the guiding system.

%%%%%%%%%%%%%%%%%%%%%%%%%%%%%%%%%%%%%%%%%%%%%%%%%%%%%%%%%%%%
\subsection{Statement of the main result} \label{sec:maintheorem}

A celebrated result of the averaging method is that, if the guiding system has a simple equilibrium, then \cref{eq:standardsystemintro} has a $T$-periodic orbit for small $\e$ (see \cite{Sanders2007}). Complementarily, our main result concerns the case when
\begin{equation}\label{gguide}
	\dot x = g_\ell (x,\mu)
\end{equation}
has a singular equilibrium point at the origin for $\mu$ equal to a critical value $\mu_*$. Without loss of generality, we assume that $\mu_*=0$, that is,
\begin{enumerate}[label=(H\arabic*)]
	\item \label{hypothesissingular1} $g_\ell(0,0) =0$;
	\item \label{hypothesissingular2} $\det \left(\frac{\partial g_\ell}{\partial x}(0,0)\right)  = 0$.
\end{enumerate}

In that case, we can state the following general result, which assumes that the family $g_\ell(x,\mu)$ containing the singular equilibrium is $\mathcal{K}$-universal, that is, ``stable" in the sense of contact or $\mathcal{K}$-equivalence. More details about this concept, which is very useful in singularity theory (see, for instance, \cite{Golubitsky1985,Martinet76,Montaldi_2021}), will be given in \Cref{sec:germsandkequivalence}
\begin{theorem} \label{maintheorem}
	Let $\dot x =g_\ell(x,\mu)$ be the guiding system associated with \cref{eq:standardsystemintro}, and assume that the vector field $x \mapsto g_\ell(x,0)$ has a singular equilibrium at the origin, i.e., \cref{hypothesissingular1,hypothesissingular2} hold.  If the germ of $x \mapsto g_\ell(x,0)$ at $x=0$ has finite codimension and the $k$-parameter family $(x,\mu) \mapsto g_\ell(x,\mu)$ is a $\mathcal{K}$-universal unfolding of this germ, then there are neighbourhoods $U,V \subset \R^{n+k+1}$ of the origin and a strongly fibred diffeomorphism $\Phi: U \to V$ such that $\Phi(x,0,0)=(x,0,0)$, and the catastrophe surface $M_\Pi$ of the family of Poincaré maps $\Pi(x,\mu,\e)$ satisfies
	\begin{equation}
		M_{\Pi} \cap V = \Phi \left((Z_{g_\ell} \times \R) \cap U \right) \cup V_{\e=0},
	\end{equation}
	where $Z_{g_\ell}=\{(x,\mu) \in \R^{n+k} : g_\ell(x,\mu)  = 0\}$ and $V_{\e=0}:=\{(X,\mu,0) \in V\}$. Additionally, the set $Z_{g_\ell} \times \{0\}$ is invariant under $\Phi$. 
\end{theorem}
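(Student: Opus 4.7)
My plan is to study the stroboscopic Poincaré map $\Pi$ through its displacement function $D(x,\mu,\e) := \Pi(x,\mu,\e) - x$, and then, after factoring out the unavoidable power of $\e^\ell$, to apply the $\mathcal{K}$-versal unfolding theorem to straighten the zero set of the reduced displacement onto the product $Z_{g_\ell} \times \R$.

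At $\e=0$ the averaged equation \cref{eq:completeellaveragedsystemintro} reduces to $\dot x = 0$, so $\Pi(x,\mu,0) \equiv x$ and hence $V_{\e=0} \subset M_\Pi$, already accounting for the second component in the claim. For $\e \neq 0$, the standard asymptotic expansion of the solution to \cref{eq:completeellaveragedsystemintro} yields $\Pi(x,\mu,\e) = x + \e^\ell T g_\ell(x,\mu) + O(\e^{\ell+1})$, so that in particular $\partial_\e^j D(x,\mu,0) = 0$ for every $0 \le j < \ell$. Applying Hadamard's lemma $\ell$ times in the $\e$ variable I may therefore write
\begin{equation}
D(x,\mu,\e) = \e^\ell F(x,\mu,\e)
\end{equation}
with $F$ a smooth germ satisfying $F(x,\mu,0) = T g_\ell(x,\mu)$; for $\e\neq 0$, membership in $M_\Pi$ is equivalent to $F(x,\mu,\e) = 0$.

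I next view $F(x,\mu,\e)$ as a $(k+1)$-parameter unfolding of the germ $T g_\ell(\cdot,0)$, which is $\mathcal{K}$-equivalent to $g_\ell(\cdot,0)$. Since $g_\ell(x,\mu)$ is by hypothesis a $\mathcal{K}$-universal $k$-parameter unfolding of this germ, and since $F(x,\mu,0) = T g_\ell(x,\mu)$ is the trivial extension at $\e=0$, the \emph{relative} form of the $\mathcal{K}$-versal unfolding theorem supplies smooth germs $\phi:(\R^{n+k+1},0) \to (\R^n,0)$, $h:(\R^{k+1},0) \to (\R^k,0)$, and a matrix-valued germ $M:(\R^{n+k+1},0) \to \mathrm{GL}(n,\R)$ satisfying $\phi(x,\mu,0)=x$, $h(\mu,0)=\mu$, $M(x,\mu,0) = T\,I_n$, and
\begin{equation}
F(x,\mu,\e) = M(x,\mu,\e)\, g_\ell\bigl(\phi(x,\mu,\e),\, h(\mu,\e)\bigr).
\end{equation}
I then set $\Psi(x,\mu,\e) := \bigl(\phi(x,\mu,\e),\, h(\mu,\e),\, \e\bigr)$, which is strongly fibred by construction and whose Jacobian at the origin is upper triangular with identity blocks on the diagonal, hence invertible. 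The desired strongly fibred diffeomorphism is $\Phi := \Psi^{-1}$, whose strong fibration follows from \cref{propositionfibredgroup}, and which satisfies $\Phi(x,0,0) = (x,0,0)$ since $\Psi(x,0,0) = (x,0,0)$.

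Invertibility of $M$ finally identifies $\{F=0\}$ locally with $\Psi^{-1}(Z_{g_\ell} \times \R) = \Phi(Z_{g_\ell} \times \R)$, and taking the union with $V_{\e=0}$ produces the claimed description of $M_\Pi \cap V$; invariance of $Z_{g_\ell} \times \{0\}$ under $\Phi$ is immediate from the fact that $\Psi$, and hence $\Phi$, restricts to the identity on $\{\e=0\}$. The principal technical obstacle is the relative form of the $\mathcal{K}$-versal unfolding theorem: the textbook versality statement produces an inducing map and trivialization only abstractly, and upgrading it to respect the boundary conditions $\phi(x,\mu,0)=x$, $h(\mu,0)=\mu$, $M(x,\mu,0) = T\,I_n$ requires Mather's homotopy method applied to the interpolating family $F_s(x,\mu,\e) := F(x,\mu,s\e)$ for $s \in [0,1]$, constructing the coordinate change as the time-$1$ flow of a vector field built from the $\mathcal{K}$-tangent space to $T g_\ell$.
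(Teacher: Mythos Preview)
Your proof is correct and takes a genuinely different route from the paper's. Both approaches reduce to identifying the zero set of the order-$\ell$ displacement $F=\Delta_\ell$ with $Z_{g_\ell}\times\R$ via a strongly fibred diffeomorphism, but they diverge at the key step of producing the fibred change of coordinates.

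The paper applies only the \emph{standard} $\mathcal{K}$-versality theorem to induce $\Delta_\ell$ from $g_\ell$, obtaining an inducing map $h:(\R^{k+1},0)\to(\R^k,0)$ with merely $h(0,0)=0$. It then proves an auxiliary lemma (\cref{propositioninduceduniversal}) showing that $\mathcal{K}$-\emph{universality} of $g_\ell$ forces $\det\bigl(\partial h/\partial\mu(0,0)\bigr)\neq 0$; only then is $h_{\rm ex}(\mu,\e)=(h(\mu,\e),\e)$ a local diffeomorphism, and \cref{theoremzeroesKequivalence} finishes. Your approach instead invokes the \emph{relative} (parametrised) versality theorem to obtain directly $h(\mu,0)=\mu$, $\phi(x,\mu,0)=x$, $M(x,\mu,0)=T I_n$, so that the Jacobian of $\Psi$ is automatically unitriangular and no auxiliary lemma is needed.

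Each approach buys something. Yours is cleaner and in fact uses only $\mathcal{K}$-versality, not universality, so it proves a slightly stronger statement; it also yields the sharper conclusion $\Phi|_{\e=0}=\mathrm{id}$ rather than just $\Phi(x,0,0)=(x,0,0)$. On the other hand, the paper's argument is entirely self-contained: the needed \cref{propositioninduceduniversal} is a short linear-algebra computation, whereas the relative versality statement you invoke is a genuine strengthening of the textbook theorem and requires the Mather homotopy argument you sketch (solving the infinitesimal equation using the Malgrange preparation theorem to split $\partial_s F_s$ against the extended $\mathcal{K}$-tangent space of $g_\ell$ with $(\mu,\e)$-dependent coefficients). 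You correctly flag this as the principal technical obstacle; to make the proof complete you would need to either cite a precise reference for that relative statement or carry out the homotopy argument in full.
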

Observe that the set $Z_{g_\ell} \times \R$ appearing in the theorem is the catastrophe surface of the Poincaré map of the extended guiding system $\dot x = g_\ell(x,\mu), \,\dot t=1$. Hence, the theorem says that $M_\Pi$ consists in the union of two sets: a trivial part corresponding to $\e=0$, since every point is a fixed point of $\Pi$ in that case; and a non-trivial part that is, near the origin, the image under a strongly fibred diffeomorphism of the catastrophe surface of the extended guiding system. 
An illustration is given in \cref{fig:foldMPi} for a fold catastrophe. 

\begin{figure}[h!]\centering
	\includegraphics[width=0.65\textwidth]{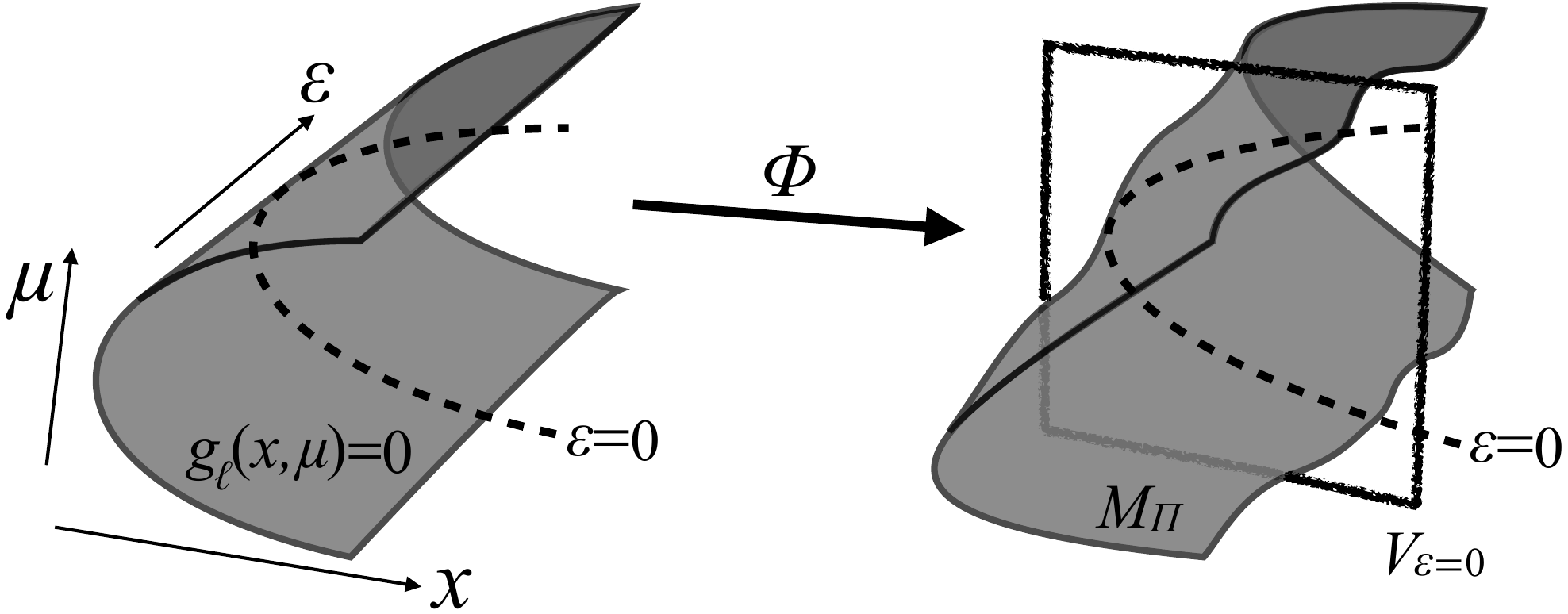}\vspace{-0.2cm}
	\caption{\sf The catastrophe surface $Z_{g_\ell}$ of the guiding system (left, suspended through $\e\in\mathbb R$), and the catastrophe surface $M_\Pi$ of the time-dependent system (right). $M_\Pi$ is the image of $Z_{g_\ell}$ under the diffeomorphism $\Phi$, and $Z_{g_\ell}\times\{0\}$ is invariant under $\Phi$. 
	}\label{fig:foldMPi}
\end{figure}
%

%%%%%%%%%%%%%%%%%%%%%%%%%%%%%%%%%%%%%%%%%%%%%%%%%%%%%%%%%%%%
\subsection{Persistence of bifurcation diagrams for stable families}\label{sec:bifstab}

A specially illustrative way to look at \cref{maintheorem} is as ensuring `persistence' of the well-known bifurcation diagrams of fixed points for $\mathcal{K}$-universal (also known as stable) families.

For a general family of vector-fields $\dot x = F(x,\eta)$, the bifurcation diagram of equilibria is the subset of the coordinate-parameter space defined by $\{(x,\eta): F(x,\eta)=0\}$. Analogously, for a general family of maps $(x,\eta) \mapsto P(x,\eta)$, the bifurcation diagram of fixed points is $\{(x,\eta): P(x,\eta)=x\}$.

In the averaging method, the guiding system can be seen as the first non-trivial approximation of a system. It is thus desirable to determine to which degree this approximation allows us to extrapolate qualitative properties to the original system. 

In the case treated in this paper, the guiding system is actually a family of vector fields undergoing some local bifurcation. The original system \cref{eq:standardsystemintro}, however, has one extra perturbative parameter $\e$ and is non-autonomous, so that the manner of comparison of its qualitative properties with those of the guiding system is not obvious. 

To make this comparison possible, we fix $\e \neq 0$ small and compare the bifurcation diagrams of $\dot x = g_\ell(x,\mu)$ and $(x,\mu) \mapsto \Pi(x,\mu,\e)$, that is, we see the parameter $\e$ as a perturbation of the bifurcation diagram of the guiding system. We can then reinterpret \cref{maintheorem} as stating that, for $\mathcal{K}$-universal families, the bifurcation diagrams of fixed points of the perturbed maps are actually $\mathcal{O}(\e)$ perturbations of the bifurcation diagram of equilibria of the guiding system, as follows.

\begin{theorem}\label{theorembifdiagram}
	Under the hypotheses of \cref{maintheorem}, the bifurcation diagram $\mathcal{D}_{\ell,0} : =\{(x,\mu) \in D \times \Sigma: g_\ell(x,\mu)=0\}$ is locally a smooth manifold of codimension $k$ near the origin. For $\e\neq0$ sufficiently small, the perturbed bifurcation diagrams $\mathcal{D}_\e:=\{(x,\mu) \in D \times \Sigma: \Pi(x,\mu,\e)=x\}$ are also smooth manifolds of codimension $k$ near the origin, which are $\mathcal{O}(\e)$-close to $\mathcal{D}_{\ell,0}$.
\end{theorem}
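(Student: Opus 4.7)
The plan is to read off both claims directly from the structural description of $M_\Pi$ supplied by \cref{maintheorem}. First I would invoke that theorem to obtain the strongly fibred local diffeomorphism $\Phi=(\Phi_1,\Phi_2,\Phi_3)$ with $\Phi(x,0,0)=(x,0,0)$, the decomposition $M_\Pi\cap V=\Phi((Z_{g_\ell}\times\R)\cap U)\cup V_{\e=0}$, and the invariance $\Phi(Z_{g_\ell}\times\{0\})=Z_{g_\ell}\times\{0\}$.

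Next, to establish that $\mathcal{D}_{\ell,0}=Z_{g_\ell}$ is a smooth submanifold of the stated codimension near the origin, I would show that the full Jacobian $Dg_\ell(0,0)\colon\R^{n+k}\to\R^n$ has rank $n$ and then apply the implicit function theorem. Surjectivity is a linear-algebraic consequence of the $\mathcal{K}$-versality hypothesis on $(x,\mu)\mapsto g_\ell(x,\mu)$: evaluating the module-theoretic versality identity $T\mathcal{K}g_\ell(\cdot,0)+\R\{\partial_{\mu_i}g_\ell|_{\mu=0}\}=\mathcal{E}_n^n$ at $x=0$ collapses $\mathcal{E}_n^n$ to its fibre $\R^n$, annihilates the ideal contribution $\langle g_\ell(\cdot,0)\rangle\cdot\mathcal{E}_n^n$ (since $g_\ell(0,0)=0$), and leaves the finite-dimensional identity $\mathrm{Im}(\partial_x g_\ell(0,0))+\mathrm{span}\{\partial_{\mu_i}g_\ell(0,0)\}=\R^n$.

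To identify $\mathcal{D}_\e$ for small $\e\neq 0$, I would use strong fibration to associate with each $\e$ the local diffeomorphism
\begin{equation*}
\phi_\e(x,\mu):=\bigl(\Phi_1(x,\mu,\Phi_3^{-1}(\e)),\;\Phi_2(\mu,\Phi_3^{-1}(\e))\bigr)
\end{equation*}
of a neighbourhood of the origin in $D\times\Sigma$. Slicing the decomposition of $M_\Pi$ at the level $\{\e=\mathrm{const}\}$ then yields $\mathcal{D}_\e=\phi_\e(Z_{g_\ell})$, so $\mathcal{D}_\e$ is a smooth manifold of the same codimension as $Z_{g_\ell}$. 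The invariance property from \cref{maintheorem} reads $\phi_0(Z_{g_\ell})=Z_{g_\ell}=\mathcal{D}_{\ell,0}$, so both diagrams arise as images of the same set under the smoothly varying $\e$-family $\{\phi_\e\}$. Smoothness of $\Phi$ in all variables then gives $\|\phi_\e(z)-\phi_0(z)\|=\mathcal{O}(\e)$ uniformly for $z$ in a small neighbourhood of the origin, which translates immediately into an $\mathcal{O}(\e)$ bound on the Hausdorff distance between $\mathcal{D}_\e$ and $\mathcal{D}_{\ell,0}$.

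The main subtlety I anticipate is the first step: turning the module-theoretic $\mathcal{K}$-versality condition into the required rank-$n$ statement on $Dg_\ell(0,0)$ requires carefully tracking which summands of the $\mathcal{K}$-tangent space survive, and which collapse, upon evaluation at the singular point. Once this is in hand, the remainder is a clean geometric consequence of \cref{maintheorem} and smooth dependence of $\Phi$ on $\e$.
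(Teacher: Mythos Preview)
Your proposal is correct and follows essentially the same route as the paper. Both arguments establish that $(x,\mu)\mapsto g_\ell(x,\mu)$ is a submersion at the origin and then use the strongly fibred diffeomorphism $\Phi$ from \cref{maintheorem} to express $\mathcal{D}_\e$ as the image of $Z_{g_\ell}\times\{\Phi_3^{-1}(\e)\}$, whence smoothness of $\Phi$ gives the $\mathcal{O}(\e)$ estimate.

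Two minor differences worth noting. First, where you sketch the submersion property by evaluating the versality identity at the origin, the paper simply cites \cite[Proposition~14.3]{Montaldi_2021}; your argument is essentially a proof of that cited fact. (A small caution: with the paper's definition of $T_{\mathcal{K},e}f$ the vector fields vanish at the origin, so the $\partial_x g_\ell(0,0)$ contribution in your displayed identity actually drops out as well---the $\partial_{\mu_i}g_\ell(0,0)$ already span $\R^n$ on their own, and your conclusion stands.) Second, the paper derives the manifold structure of $\mathcal{D}_\e$ directly from the fact that $(x,\mu)\mapsto\Delta_\ell(x,\mu,\e)$ remains a submersion for small $\e$, rather than from $\mathcal{D}_\e=\phi_\e(Z_{g_\ell})$; both are immediate and equivalent.
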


This will be proven in \Cref{sec:proofbif}, and can essentially be stated as \textit{persistence} of the bifurcation diagram from the guiding system \cref{gguide} to the full $\e$-perturbed system \cref{eq:standardsystemintro} for small values of $\e$. The statement of the result in terms of persistence of qualitative properties of the guiding system is intended to mirror a selection of results in the area (see \cite[Chapter 6]{Sanders2007} and, more recently, \cite{CANDIDO20204555,PEREIRA20231}.

%%%%%%%%%%%%%%%%%%%%%%%%%%%%%%%%%%%%%%%%%%%%%%%%%%%%%%%%%%%%
\subsection{Stabilisation of non-stable families}\label{sec:bifnonstab}

For non-stable families the bifurcation diagrams will not typically persist, as we show below for the transcritical and pitchfork bifurcations, which instead form a pair of folds and a cusp, respectively. The analysis of those two families is only meant to illustrate how the method can still be applied (not directly and with due caution) if a non-versal bifurcation appears in the guiding system, by `embedding' this bifurcation into a larger versal family. 

The specific choice of the transcritical and the pitchfork is motivated by the fact that they are one-parameter families that appear often in the literature, due to their natural connection with symmetries or other constraints of the system.  For instance, the pitchfork bifurcation is versal in the context of germs having $\mathbb{Z}_2$-symmetry, whereas the transcritical is versal if germs are required to have $0$ as an equilibrium point (for more details, see \cite[Chapter 23]{Montaldi_2021}). The stabilisation process explained in this section can thus be seen as the effect time-periodic perturbations have in breaking symmetries - or, more generally, removing constraints - of the model.

\subsubsection{Transcritical}
\begin{theorem} \label{theoremtranscriticalcatastrophe}
	Let $n=1$ and suppose that the guiding system $\dot x =g_\ell(x,\mu)$ undergoes a transcritical bifurcation at the origin for $\mu=0$. If
	\begin{equation}
		g_{\ell+1}(0,0) \neq 0,
	\end{equation} 
	then there are neighbourhoods $U,V \subset \R^{1+1+1}$ of the origin and a strongly fibred diffeomorphism $\Phi: U \to V$ such that the catastrophe surface $M_\Pi$ of the family of Poincaré maps $\Pi(x,\mu,\e)$ satisfies
	\begin{equation}
		M_\Pi \cap V = \Phi \left(\{(y,\theta,\eta) \in \R \times \R \times \R : \eta= y^2- \theta^2\} \cap U \right) \cup V_{\e=0},
	\end{equation}
	where $V_{\e=0}:=\{(X,\mu,0) \in U \}$. Additionally, $\Phi(0,0,0)=(0,0,0)$,
	\begin{equation}
		\sgn\left(\Phi'_3(0)\right) = \sgn\left(\frac{\partial^2 g_\ell}{\partial x^2}(0,0)\right) \cdot \sgn \left(g_{\ell+1}(0,0)\right),
	\end{equation} and
	\begin{equation}
		(Z_{g_\ell} \times \{0\}) \cap V = \Phi\left(\{(y,\theta,0) \in \R \times \R \times \R: y^2-\theta^2=0\} \cap U\right).
	\end{equation}
\end{theorem}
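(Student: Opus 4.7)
The plan is to reduce the fixed-point equation $\Pi(x,\mu,\e)=x$ to a scalar equation of the form $\e=\psi(x,\mu)$ and then apply a fibred version of the Morse splitting lemma to bring the resulting saddle-type function $\psi$ into the normal form corresponding to $\eta=y^{2}-\theta^{2}$.

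First, the standard averaging expansion of the stroboscopic map, which follows from the form of \cref{eq:completeellaveragedsystemintro}, gives
\[
\Pi(x,\mu,\e)-x=T\e^{\ell}g_{\ell}(x,\mu)+T\e^{\ell+1}g_{\ell+1}(x,\mu)+O(\e^{\ell+2})=\e^{\ell}H(x,\mu,\e),
\]
with $H$ smooth, $H(x,\mu,0)=Tg_{\ell}(x,\mu)$, and $\partial_{\e}H(0,0,0)=Tg_{\ell+1}(0,0)\neq 0$ by hypothesis. Hence $M_{\Pi}=\{\e=0\}\cup\{H=0\}$ locally; the first piece is $V_{\e=0}$, so it remains to describe $\{H=0\}$. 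The implicit function theorem in the variable $\e$ produces a smooth $\psi(x,\mu)$ with $\psi(0,0)=0$ such that $\{H=0\}=\{\e=\psi(x,\mu)\}$ near the origin, with leading behaviour $\psi=-g_{\ell}/g_{\ell+1}(0,0)+O(g_{\ell}^{2})$.

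Second, the transcritical hypothesis lets us write $g_{\ell}(x,\mu)=x\,q(x,\mu)$ with $q(0,0)=0$, $q_{x}(0,0)\neq 0$, $q_{\mu}(0,0)\neq 0$. Setting $a=\tfrac12\partial_{xx}g_{\ell}(0,0)$, $b=\partial_{x\mu}g_{\ell}(0,0)$, $c=g_{\ell+1}(0,0)$, the function $\psi$ has a non-degenerate indefinite Hessian at the origin, and completion of the square gives the leading form
\[
\psi(x,\mu)=-\tfrac{a}{c}\Bigl(\bigl(x+\tfrac{b}{2a}\mu\bigr)^{2}-\bigl(\tfrac{b}{2a}\mu\bigr)^{2}\Bigr)+O_{3}.
\]
A parametric Morse (splitting) reduction performed in $x$ with $\mu$ held fixed yields a fibred change $x=X(y,\mu)$ with $X(0,0)=0$ and a smooth residual $r(\mu)=\tfrac{b^{2}}{4ac}\mu^{2}+O(\mu^{3})$ such that $\psi(X(y,\mu),\mu)=-(a/c)y^{2}+r(\mu)$. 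Since $r''(0)\neq 0$, a further one-dimensional Morse change $\mu=\Phi_{2}(\theta)$ combined with a linear rescaling of $\theta$ normalises $r$ to $(a/c)\theta^{2}$, so in the new coordinates $\psi(\Phi_{1}(y,\theta),\Phi_{2}(\theta))=-(a/c)(y^{2}-\theta^{2})$.

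Defining $\Phi_{3}(\eta)=-(a/c)\eta$ and composing with the previous changes gives, by \cref{propositionfibredgroup}, a strongly fibred diffeomorphism $\Phi$ with $\Phi(0,0,0)=(0,0,0)$ whose image of the model set $\{\eta=y^{2}-\theta^{2}\}$ coincides with $\{H=0\}$; adjoining $V_{\e=0}$ recovers $M_{\Pi}\cap V$, and the last statement of the theorem about $Z_{g_{\ell}}\times\{0\}$ follows from restricting the construction to $\e=\eta=0$. The sign of $\Phi_{3}'(0)$ is read off from the explicit formulae, yielding the stated product $\sgn\Phi_{3}'(0)=\sgn(\partial_{xx}g_{\ell}(0,0))\sgn(g_{\ell+1}(0,0))$ after the appropriate choice of orientation in the rescaling of $\theta$. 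The principal technical obstacle is the fibred nature of the Morse reduction: the classical two-variable Morse lemma would suffice for a general saddle, but strong fibration forces the reduction to be carried out one variable at a time (first normalising the $x$-direction with $\mu$ held as parameter, then normalising the residual function of $\mu$), and the hypothesis $g_{\ell+1}(0,0)\neq 0$ is precisely what guarantees non-degeneracy at both stages.
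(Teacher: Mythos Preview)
Your approach is genuinely different from the paper's. The paper proceeds via $\mathcal{K}$-versality machinery: it first shows (Proposition~\ref{propositiontransn1}) that $\Delta_\ell$ is $\mathcal{K}$-induced from the fold unfolding $y^2+\eta$, obtaining $\Delta_\ell=Q(\zeta^2+h(\mu,\e))$, and then applies the same versality argument a second time to the inducing function $h$ (which itself has a fold in $\mu$), yielding the canonical form $\Delta_\ell=Q(\zeta^2+S(a^2+b))$. Your route --- solving $H=0$ for $\e$ by the implicit function theorem and then applying a parametric Morse reduction in two stages --- is more elementary and avoids the $\mathcal{K}$-equivalence apparatus entirely. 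What your approach buys is directness; what the paper's buys is a uniform framework that also handles the pitchfork and, more importantly, \cref{maintheorem}.

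Two points need tightening. First, the factorisation $g_\ell(x,\mu)=x\,q(x,\mu)$ does not follow from the paper's definition of transcritical (\cref{def:transcriticaln1}), which is phrased via $\mathcal{K}$-equivalence to $\mu x+x^2$ and does not force a persistent zero at $x=0$. What you actually need --- and what \emph{does} follow from that definition --- is that $g_\ell$ has $g_\ell(0,0)=0$, $\nabla g_\ell(0,0)=0$, $\partial_{xx}g_\ell(0,0)\neq 0$, and an indefinite nondegenerate Hessian in $(x,\mu)$ at the origin (the zero set being two transversal curves). This is enough for your two-stage Morse reduction to go through, and in particular guarantees $r''(0)\neq 0$ with sign opposite to $\sgn(\partial_{xx}\psi)$.

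Second, the appeal to ``the appropriate choice of orientation in the rescaling of $\theta$'' to obtain the stated sign of $\Phi_3'(0)$ is not valid: rescaling $\theta$ does not change the sign of $\theta^2$, and the strongly-fibred constraint forbids swapping the roles of $y$ and $\theta$. Your construction forces $\Phi_3'(0)$ to have the sign of $\partial_{xx}\psi(0,0)=-\partial_{xx}g_\ell(0,0)/g_{\ell+1}(0,0)$, i.e.\ $-\sgn(\partial_{xx}g_\ell)\sgn(g_{\ell+1})$, which is the opposite of what the theorem asserts. (A direct check on the normal-form example $g_\ell=x^2+\mu x$, $g_{\ell+1}(0,0)=1$, confirms your sign; the paper's own derivation contains the same discrepancy at \cref{eq:transcriticalproof1}.) So the method is sound, but the final sign claim cannot be rescued by an orientation choice.
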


\subsubsection{Pitchfork}
\begin{theorem}\label{theorempitchforkcatastrophe}
	Let $n=1$ and suppose that the guiding system $\dot x =g_\ell(x,\mu)$ undergoes a pitchfork bifurcation at the origin for $\mu=0$. If
	\begin{equation}
		g_{\ell+1}(0,0) \neq 0,
	\end{equation} 
	then there are neighbourhoods $U,V \subset \R^{n+1+1}$ of the origin and a weakly fibred diffeomorphism $\Phi: U \to V$ such that the catastrophe surface $M_\Pi$ of the family of Poincaré maps $\Pi(x,\mu,\e)$ satisfies
	\begin{equation}
		M_\Pi \cap V = \Phi \left(\{(y,\theta,\eta) \in \R \times \R \times \R : y^3+ y \theta + \eta=0 \} \cap U \right) \cup V_{\e=0},
	\end{equation}
	where $V_{\e=0}:=\{(X,\mu,0) \in U \}$. Additionally, $\Phi(0,0,0)=(0,0,0)$, and $\Phi$ is strongly fibred to the first order at the origin.
\end{theorem}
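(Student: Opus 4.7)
The plan is to view the non-trivial portion of $M_\Pi$ as the zero locus of a smooth displacement function and to recognise its germ at the origin as a $\mathcal{K}$-universal unfolding of the cusp germ $x^3$ in the two parameters $(\mu,\e)$. Using the averaging expansion \eqref{eq:completeellaveragedsystemintro} together with the trivial identity $\Pi(x,\mu,0)\equiv x$, and mirroring the setup of \Cref{maintheorem,theoremtranscriticalcatastrophe}, I would first write
\begin{equation*}
\Pi(x,\mu,\e)-x \;=\; \e^\ell\,q(x,\mu,\e),
\end{equation*}
where $q$ is smooth, $q(x,\mu,0)=T\,g_\ell(x,\mu)$, and $\partial_\e q(0,0,0)=T\,g_{\ell+1}(0,0)$ (any Melnikov-type contribution at the origin arising from \eqref{eq:definitionRell} vanishes because $\partial_x g_\ell(0,0)=0$). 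Consequently $M_\Pi\cap V = V_{\e=0}\cup\{q=0\}$, and the theorem is reduced to a local normal form for $\{q=0\}$.

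The pitchfork hypothesis supplies $g_\ell(0,\mu)\equiv 0$, $\partial_x^2 g_\ell(0,0)=0$, $\partial_x^3 g_\ell(0,0)\neq 0$, and $\partial_\mu\partial_x g_\ell(0,0)\neq 0$, which translate to $q=\partial_x q=\partial_x^2 q=0$ and $\partial_x^3 q\neq 0$ at the origin, so $x\mapsto q(x,0,0)$ has the cusp germ $x^3$. The $\mathcal{K}$-universality of the two-parameter unfolding $q(x,\mu,\e)$ then follows (see \cref{sec:germsandkequivalence}) from nonsingularity of the Jacobian of $(\mu,\e)\mapsto(q(0,\mu,\e),\partial_x q(0,\mu,\e))$ at the origin, which a direct computation yields as
\begin{equation*}
\begin{pmatrix} \partial_\mu q(0,0,0) & \partial_\e q(0,0,0) \\ \partial_\mu\partial_x q(0,0,0) & \partial_\e\partial_x q(0,0,0) \end{pmatrix}
\;=\; \begin{pmatrix} 0 & T\,g_{\ell+1}(0,0) \\ T\,\partial_\mu\partial_x g_\ell(0,0) & \ast \end{pmatrix},
\end{equation*}
invertible by the standing hypotheses. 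Applying the classical cusp universal-unfolding theorem then produces neighbourhoods $U,V$ of the origin, a nowhere-vanishing smooth $c(x,\mu,\e)$, and a local diffeomorphism
\[
\Phi(x,\mu,\e) \;=\; \bigl(y(x,\mu,\e),\,\theta(\mu,\e),\,\eta(\mu,\e)\bigr),
\]
with $\Phi(0,0,0)=(0,0,0)$ and $q=c\,(y^3+\theta y+\eta)$. The form of $\Phi$ is automatically weakly fibred, and equating zero sets gives $M_\Pi\cap V = V_{\e=0}\cup\Phi(\{y^3+\theta y+\eta=0\}\cap U)$.

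To upgrade to \emph{strongly fibred to the first order} at the origin, I evaluate the normal form along $y=0$: writing $x_0(\mu,\e)$ for the smooth solution (well defined by the implicit function theorem since $\partial_x y\neq 0$) of $y(x_0,\mu,\e)=0$ with $x_0(0,0)=0$, one has $\eta(\mu,\e) = q(x_0(\mu,\e),\mu,\e)/c(x_0(\mu,\e),\mu,\e)$. Differentiating in $\mu$ at the origin and using $\partial_x q(0,0,0)=0$ together with the trivial-branch identity $\partial_\mu q(0,0,0)=T\,\partial_\mu g_\ell(0,0)=0$, I obtain $\partial_\mu\eta(0,0)=0$; combined with $\partial_\e\eta(0,0)=T\,g_{\ell+1}(0,0)/c(0,0,0)\neq 0$ from the transversality above, this is precisely the assertion that the 1-jet of $\Phi_3$ depends only on $\e$. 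I expect this last step to be the main technical point: the classical cusp theorem only outputs a weakly fibred $\Phi$, and promoting its 1-jet to a strongly fibred form requires the pitchfork's trivial branch to propagate through the normal-form reduction, which is exactly what forces $\partial_\mu q$ (rather than merely $\partial_\mu g_\ell$) to vanish at the origin.
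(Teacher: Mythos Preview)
Your argument is correct and matches the paper's route: the paper likewise factors $\Delta_\ell$ (your $q$) through the cusp $\mathcal{K}$-versal unfolding as $Q\,(\zeta^3+a\zeta+b)$, verifies $\partial_\mu b(0,0)=0$, $\partial_\mu a(0,0)\neq 0$, and $\partial_\e b(0,0)\neq 0$ from $g_{\ell+1}(0,0)\neq 0$, then takes $\Psi=(\zeta,a,b)$ and $\Phi=\Psi^{-1}$. One caveat: under the paper's $\mathcal{K}$-equivalence definition of pitchfork (\cref{def:pitchforkn1}) the trivial branch need not sit at $x=0$, so the identity $g_\ell(0,\mu)\equiv 0$ is not guaranteed; however the only consequence you actually use, $\partial_\mu g_\ell(0,0)=0$, does follow from that definition (write $g_\ell(x,\mu)=P(x,\mu)(\mu\psi+\psi^3)$ and evaluate at the origin), so your reasoning stands.
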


We will illustrate these results with examples in \Cref{sec:examples}.

%%%%%%%%%%%%%%%%%%%%%%%%%%%%%%%%%%%%%%%%%%%%%%%%%%%%%%%%%%%%
\subsection{A discussion of topological equivalence} \label{sec:topeq}

In studying bifurcations of dynamical systems, it is common to work with topological equivalence classes. As noted in \cite{j22cat}, this is practically restrictive, and we will instead work only with the bifurcations of numbers of equilibria, better termed {\it catastrophes} as they ignore topological properties of the dynamics, also referred to {\it underlying catastrophes} in \cite{j22cat}. 

Generally speaking, the catastrophe surface alone does not determine the topological class of the Poincaré map: there are potentially multiple topological classes with the same catastrophe surface. However, knowing the catastrophe surface reduces the number of possibilities for the topological types of the map, and we can see it as one of the elements constituting a general topological description.

Let us briefly explore this distinction by presenting the saddle-node case in one dimension, for which the catastrophe surface allows us to very easily infer topological conjugacy, and also exhibiting an interesting counter-example for planar vector fields.

\subsubsection{The saddle-node in one-dimension} \label{sec:onedimensionaltopologicalequivalence}

In the case of well-studied one-dimensional stable bifurcations, we can assert the topological conjugacy class of $\Pi_\e$ by combining the method exposed in this paper with known genericity conditions ensuring topological conjugacy to the normal form for the bifurcation (see \cite[Theorems 4.1 and 4.2]{Kuznetsov2023}).

\begin{theorem} \label{theoremsaddlenode}
	Let $n=1$ and suppose that the guiding system $\dot x = g_\ell (x,\mu)$ undergoes a saddle-node bifurcation at $(0,0)$, that is, assume that the following conditions hold:
	\begin{enumerate}[label=(F\arabic*)]
		\item \label{saddlenode1}$\frac{\partial g_\ell}{\partial \mu}(0,0)\neq 0$;
		\item \label{saddlenode2}$\frac{\partial^2 g_\ell}{\partial x^2}(0,0) \neq 0 $.
	\end{enumerate}
	Then, there are $\e_1 \in (0,\e_0)$, and smooth functions $x^*:(-\e_1,\e_1) \to D$ and $\mu^*:(-\e_1,\e_1) \to \Sigma$ such that:
	\begin{enumerate}[label=(\roman*)]
		\item $(x^*(0),\mu^*(0))=(0,0)$.
		\item For each $\e \in (-\e_1,\e_1) \setminus\{0\}$ fixed, the family of stroboscopic Poincaré maps $(x,\mu)\mapsto \Pi(x,\mu,\e)$ is locally topologically conjugate near $(x^*(\e),\mu^*(\e))$ to one of two possible normal forms: $(y,\beta)  \mapsto (\beta-\mu^*(\e)) +(y-x^*(\e)) \pm (y-x^*(\e))^2$. In other words, the family of $(x,\mu) \mapsto \Pi(x,\mu,\e)$ is, up to translation of coordinates, locally topologically conjugate to one of the two topological normal forms for the saddle-node bifurcation for maps: $(y,\beta) \mapsto \beta + y \pm y^2$.
	\end{enumerate}  
\end{theorem}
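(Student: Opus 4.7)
The plan is to combine the catastrophe-level statement of \cref{maintheorem} with the classical topological normal-form theorems for the saddle-node bifurcation of one-dimensional maps in \cite[Thms.~4.1 and 4.2]{Kuznetsov2023}.

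First I would invoke \cref{maintheorem}. The phrase ``undergoes a saddle-node bifurcation at $(0,0)$'' implicitly provides $g_\ell(0,0)=0$ and $\partial_x g_\ell(0,0)=0$, which together with (F2) identify the germ of $x\mapsto g_\ell(x,0)$ at the origin as a fold (an $A_2$ singularity) of codimension one; condition (F1) is exactly the transversality ensuring that $(x,\mu)\mapsto g_\ell(x,\mu)$ is a $\mathcal{K}$-universal unfolding. \cref{maintheorem} then furnishes a strongly fibred local diffeomorphism $\Phi:U\to V$ with $\Phi(0,0,0)=(0,0,0)$ and
\begin{equation}
    M_\Pi \cap V \;=\; \Phi\bigl((Z_{g_\ell}\times\R)\cap U\bigr)\cup V_{\e=0}.
\end{equation}

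Second, I would track the fold point through $\Phi$ to define $(x^*(\e),\mu^*(\e))$. Since $\Phi$ is strongly fibred, $\Phi_3$ depends only on the last coordinate and is a local diffeomorphism of $\R$ fixing $0$. For each small $\e\neq 0$, the $\e$-slice of $M_\Pi$ is the image of the planar zero-set $Z_{g_\ell}$ under the planar diffeomorphism $(x,\mu)\mapsto(\Phi_1(x,\mu,\eta),\Phi_2(\mu,\eta))$ with $\eta=\Phi_3^{-1}(\e)$. The non-degenerate fold point of $Z_{g_\ell}$ at $(0,0)$ is therefore transported to a unique non-degenerate fold point of the fixed-point curve $\{\Pi_\e(x,\mu)=x\}$, whose coordinates
\begin{equation}
    x^*(\e):=\Phi_1\bigl(0,0,\Phi_3^{-1}(\e)\bigr),\qquad \mu^*(\e):=\Phi_2\bigl(0,\Phi_3^{-1}(\e)\bigr),
\end{equation}
are smooth in $\e$ and satisfy $(x^*(0),\mu^*(0))=(0,0)$, establishing (i).

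Third, I would verify the hypotheses of \cite[Thms.~4.1 and 4.2]{Kuznetsov2023}. Being a non-degenerate fold point of the fixed-point curve of $\Pi_\e$ is equivalent to the pointwise relations
\begin{equation}
    \Pi_\e(x^*,\mu^*)=x^*,\quad \partial_x\Pi_\e(x^*,\mu^*)=1,\quad \partial_\mu\Pi_\e(x^*,\mu^*)\neq 0,\quad \partial_{xx}^2\Pi_\e(x^*,\mu^*)\neq 0,
\end{equation}
which are exactly the saddle-node hypotheses in Kuznetsov's normal-form theorem for one-parameter families of one-dimensional maps. That theorem, combined with the translation $(y,\beta)=(x-x^*(\e),\mu-\mu^*(\e))$, produces the local topological conjugacy between $(x,\mu)\mapsto\Pi_\e(x,\mu)$ and $(y,\beta)\mapsto(\beta-\mu^*(\e))+(y-x^*(\e))\pm(y-x^*(\e))^2$ claimed in (ii). The sign $\pm$ is governed by $\sgn(\partial_{xx}^2\Pi_\e(x^*,\mu^*))$, which for small $\e\neq 0$ coincides with $\sgn(\e^\ell\,\partial_{xx}^2 g_\ell(0,0))$ via the leading-order averaging expansion $\Pi_\e(x,\mu)=x+T\e^\ell g_\ell(x,\mu)+O(\e^{\ell+1})$.

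The main obstacle is Step 3: converting the purely geometric statement of \cref{maintheorem} into the pointwise derivative information demanded by Kuznetsov. The first three relations are simply the analytic characterisation of a fold point of the level set $\{\Pi_\e(x,\mu)-x=0\}$ and so follow once Step 2 identifies $(x^*(\e),\mu^*(\e))$ as such a point. The delicate condition is $\partial_{xx}^2\Pi_\e\neq 0$, which amounts to checking that a planar diffeomorphism preserves the quadratic order of tangency of a fold curve. I would handle this either abstractly, by invoking diffeomorphism-invariance of the $A_2$ singularity together with (F2), or concretely by expanding the Poincaré map as above and reducing the non-degeneracy of $\Pi_\e$ at $(x^*(\e),\mu^*(\e))$ directly to the non-degeneracy of $g_\ell$ at $(0,0)$ guaranteed by (F2).
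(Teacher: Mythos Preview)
Your proposal is correct and follows essentially the same route as the paper: invoke \cref{maintheorem}, define $x^*(\e)=\Phi_1(0,0,\Phi_3^{-1}(\e))$ and $\mu^*(\e)=\Phi_2(0,\Phi_3^{-1}(\e))$, then verify the hypotheses of \cite[Thms.~4.1 and 4.2]{Kuznetsov2023} at that point. The only presentational difference is in establishing $\partial_x\Pi_\e(x^*,\mu^*)=1$: the paper parametrises $Z_{g_\ell}$ explicitly as $(t,\eta(t))$, differentiates $g_\ell(t,\eta(t))=0$ to obtain $\eta'(0)=0$, and then differentiates $\Delta_\ell(\Phi_1(t,\eta(t),\e'),\Phi_2(\eta(t),\e'),\e)=0$ at $t=0$ to read off $\partial_x\Delta_\ell=0$; you instead argue geometrically that a fibred diffeomorphism transports fold points to fold points, which amounts to the same computation. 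For the remaining non-degeneracy conditions $\partial_\mu\Pi_\e\neq0$ and $\partial_{xx}^2\Pi_\e\neq0$, the paper does exactly your ``concrete'' option, using $\Pi_\e=x+\e^\ell\Delta_\ell$ together with smoothness in $\e$ and (F1)--(F2).
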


The proof of this theorem is located in \cref{sec:proofs}. An analogous result can be obtained for the cusp bifurcation, by considering the conditions available in \cite[Theorem 9.1]{Kuznetsov2023}.

\subsubsection{The saddle-focus in two-dimensions}  \label{sec:saddlefocus}

We looked at systems with \textit{saddle-node} bifurcations in \Cref{sec:onedimensionaltopologicalequivalence}. The saddle-node is well known to be a generic one parameter bifurcation under topological equivalence, with normal form given by $(\dot x_1,\dot x_2,\ldots,\dot x_n)=\left(x_1^2+\mu,\;x_2,\ldots, x_n\right)$.

However, the collision between a saddle and a focus, which we will refer to as a saddle-focus, is not a generic one parameter bifurcation, but an example one parameter family is obtained if we interchange two entries on the right-hand side of the normal form family of the saddle-node: $(\dot x_1,\dot x_2,\ldots, \dot x_n)=\left(x_2,\;x_1^2+\mu,\ldots, x_n\right)$.

A generic family with a saddle-focus is found, for example, in the well studied Bogdanov-Takens bifurcation (see \cite{Arnol'd1994}, \cite{Guckenheimer1983}, or \cite{Kuznetsov2023})
\begin{equation}
	(\dot x_1,\dot x_2)=\left(x_2,\;x_1^2\pm x_1 x_2 +\mu_2 x_1^2 +\mu_1 \right)\;,
\end{equation}
requiring not a single parameter $\mu_1$ to unfold it, but also $\mu_2$ to control the local appearance of limit cycles and homoclinic connections (hence we have $\mu=(\mu_1,\mu_2)$). The singular germ corresponding to this family, obtained for zero values of the parameters, is $\left(\dot x_1 , \dot x_2\right) = (x_2, x_1^2 + x_1 x_2)$, and the two parameters appearing in the Bogdanov-Takens bifurcation ensure that this germ is of codimension 2 when considering topological equivalence.

However, it is interesting to notice that, if regarded as the germ of a plane map, the germ of $(x_1,x_2) \mapsto (x_2,x_1^2+x_1 x_2)$ is actually $\mathcal{K}$-equivalent to $(x_1,x_2) \mapsto (x_2,x_1^2)$, which is itself equivalent to the saddle-node germ $(x_1,x_2) \mapsto (x_1^2,x_2)$. Essentially, this means that, with respect to the unfolding of zeroes of those map germs, i.e., equilibria of the corresponding vector fields, all three germs behave identically. Naturally, this observation does not allow us to obtain a complete description of the phase portrait of a family, as they are topologically different, but certain properties -- namely the numbers of equilibria and hence the catastrophe surface -- can still be fully understood.

In particular, we can describe the unfolding of equilibria of the germ of the vector field $(\dot x_1,\dot x_2) = (x_2, x_1^2)$, for which a complete unfolding with respect to topological equivalence is not known, and probably not even possible, hence the alternative germs unfolded in the Bogdanov-Takens bifurcation \cite{Bogdanov1976,Takens1974}, versus the Dumortier-Roussarie-Sotomayor bifurcation \cite{Dumortier1987}. 

The latter of these provides a different generic family with a saddle-focus configuration,
\begin{equation}\label{DRS}
	(\dot x_1,\dot x_2)=\left(x_2,\;x_1^2+\mu_1+x_2(\mu_2+\mu_3x_1+x_1^3)\right)\;.
\end{equation}
Like the Bogdanov-Takens bifurcation, this family requires not just the parameter $\mu_1$ to unfold it, but in this case two other parameters, $\mu_2$ and $\mu_3$. The singular germ of this family is $\left(\dot x_1 , \dot x_2\right) = (x_2, x_1^2 + x_1^3 x_2)$. We will use this example to illustrate persistence of the catastrophe, irrespective of topological equivalence, in \Cref{sec:counterexample}.

%%%%%%%%%%%%%%%%%%%%%%%%%%%%%%%%%%%%%%%%%%%%%%%%%%%%%%%%%%%%
%%%%%%%%%%%%%%%%%%%%%%%%%%%%%%%%%%%%%%%%%%%%%%%%%%%%%%%%%%%%

\section{The theory in practice: time-periodic coefficients} \label{sec:examples}

Before setting out the theory from \Cref{sec:results} in detail, let us show how it works in practice on a few examples. For these we take the interesting applied problem of a system whose parameters are not exactly fixed, but vary slightly over time. To apply averaging we will assume that variation is periodic, for instance a physiological model in which some hormones have a small circadian perturbation, or a climate model where temperature has a small daily fluctuation.

The examples we treat here are intentionally simple, thus could be studied with other methods not relying on the averaging method - in particular, time-periodicity is not essential. However, they are meant only to \textit{illustrate} the general results obtained in this paper, hence the choice for simple settings. 

\subsection{Example: persistence of fold catastrophe} \label{sec:egfold}

Consider a system $\dot Y=Y^2$, perturbed by a parameter of order $\e^2$ and with a $T$-periodic fluctuation, 
\begin{equation}
	\dot Y = Y^2+\e^2 f(t)\;.
\end{equation}
For $\e\neq0$, the change of variables $Y= \e X$ transforms this into
\begin{equation} \label{eq:examplefold1}
	\dot X = \e \left(X^2 + f(t)\right),
\end{equation}
which is a family of systems in the standard form \cref{eq:standardsystemintro}. 

If we define the average of $f(t)$ as
\begin{equation}
	\mu : = \frac{1}{T} \int_0^T f(t)dt
\end{equation}
and the oscillating part of $f$ to be $\tilde{f}(t) = f(t) - \mu$, we have
\begin{equation}\label{eq:examplefold2}
	\dot X = \e \left(X^2 + \mu + \tilde{f}(t)\right),
\end{equation}
where the average of $\tilde{f}$ over $[0,T]$ is zero. Accordingly, the stroboscopic Poincaré map of \cref{eq:examplefold2} will be denoted by $\Pi$ and its catastrophe surface by $M_\Pi$.

Applying the transformation of variables given by the averaging theorem to obtain a system of the form \cref{eq:completeaveragedsystemintro}, this family becomes
\begin{equation}\label{eq:examplefold2av}
	\dot x = \e (x^2+\mu) + \e^2 R_1 (t,x,\mu,\e).
\end{equation}
It is then clear that the guiding system $\dot x = x^2+\mu$ undergoes a fold bifurcation for $\mu=0$, which corresponds to a $\mathcal{K}$-universal unfolding of the singular germ $x^2$. To illustrate the verification of $\mathcal{K}$-universality of an unfolding, we apply the criterion presented in \cref{propositionuniversaliff}. There is only one parameter in the unfolding $F(x,\mu) = x^2 + \mu$ of $f(x) = x^2$, and the set of germs 
\begin{equation*}
	\mathcal{D}:=\left\{\left[\frac{\p F}{\p \mu}\Big|_{\mu=0} \right]\right\} = \left\{[1]\right\}
\end{equation*}
is clearly linearly independent in $\mathcal{E}_1$. Moreover, the extended $\mathcal{K}$-tangent space of $[f]$ is 
\begin{equation*}
	T_{\mathcal{K},\e} f = \left\{[x] \cdot [X] + [M] \cdot [x^2]: [X] \in \bm{X}^0_1, [M] \in \bm{M}^0_1 \right\} = \left\{[x] \cdot [X]: [X] \in \mathcal{E}_1 \right\}.
\end{equation*}
Hence, it follows from Hadamard's Lemma (for a statement, see \cite[Section 3.2]{Montaldi_2021}) that 
\begin{equation*}
	T_{\mathcal{K},\e} f \oplus \mathcal{D} = \mathcal{E}_1,
\end{equation*}
so that, by \cref{propositionuniversaliff}, it follows that $[F]$ corresponds to a $\mathcal{K}$-universal unfolding of the singular germ $[f]$.

Therefore, \Cref{maintheorem} ensures that $M_\Pi$ locally has the form of a fold surface near the origin. Consequently, for each small fixed $\e\neq0$, a fold-like emergence (or collision) of fixed points of $x\mapsto \Pi(x,\mu,\e)$ occurs near $0$ as $\mu$ traverses a neighbourhood of zero. The value of $\mu$ for which this occurs is given by a continuous function $\mu^*(\e)$ satisfying $\mu^*(0)=0$.

As an example, let $f(t)=\mu+\sin(t)$, so
\begin{equation}\label{foldeg1}
	\begin{aligned}
		\dot X&=\e(X^2+\mu+\sin(t)).
	\end{aligned}
\end{equation}
While we cannot solve this exactly, it is instructive to look at its perturbative solution for small $\e$, which is $X(t,X_0,\mu,\e)\sim x_g(t,X_0,\mu,\e)+\e(1-\cos t)+2X_0\e^2(t-\sin t)+\mathcal{O}(\e^3)$, where $x_g$ is the solution of the non-oscillatory problem $\dot x=\e(x^2+\mu)$. Averaging this system amounts to removing the order $\e$ oscillatory term by making a change of variables $X=x-\e \cos t$. 
In the method set out in \Cref{sec:averagingintro}, we have $\tilde f(t)=\sin(t)$ and $R_1(t,x,\mu,\e)=-2x\cos(t)$, giving the averaged system 
\begin{equation}\label{foldeg1av}
	\begin{aligned}
		\dot x=\e(x^2+\mu)-2\e^2x\cos(t),
	\end{aligned}
\end{equation}
\begin{figure}[h!!]\centering
	\includegraphics[width=0.7\textwidth]{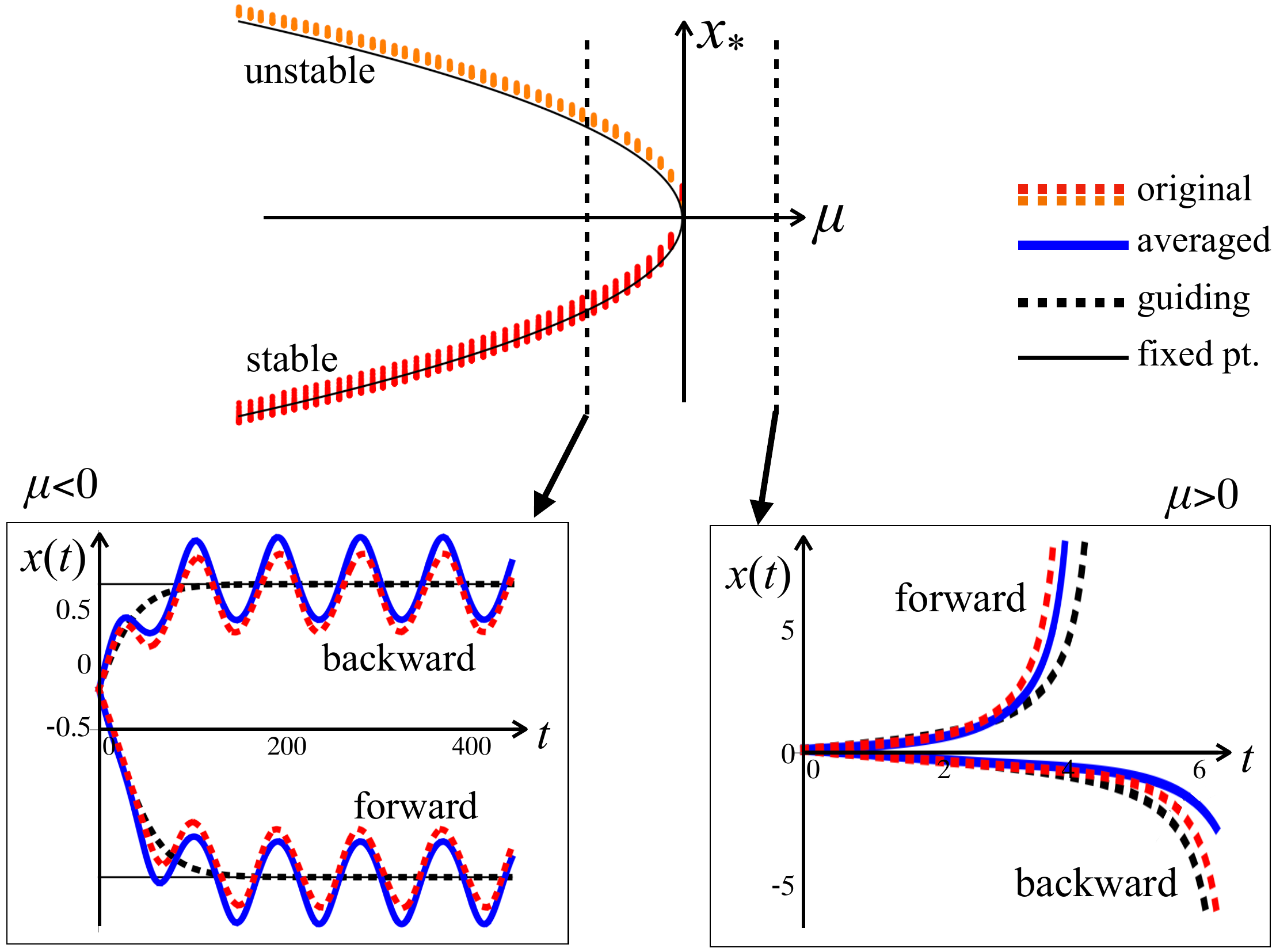}\vspace{-0.2cm}
	\caption{\sf Solutions of the system \cref{foldeg1} exhibiting a fold, with $\mu=\pm0.5$ and $\e=0.4$.
		The upper picture shows the Poincar\'e map of the original system (red/orange points), converging in forward/backward time to the fixed points of the guiding system (black curves). For two values of $\mu$ we plot the solutions below. For $\mu<\mu^*(0.5)$, the exact solutions (red dotted curves) and averaged solutions (blue curves) oscillate around the guiding solutions (black dotted curves), all converging in forward/backward time onto the stable/unstable fixed points (black lines). For $\mu>\mu^*(0.5)$ there are no fixed points and the solutions diverge. (For the exact solution we plot the variable $x=X+\e\cos t$). 
	}\label{fig:foldsol}
\end{figure}
whose solutions satisfy $x(t,x_0,\mu,\e)\sim x_g(t,x_0,\mu,\e)-2x_0\e^2\sin t+\mathcal{O}(\e^3)$, where the oscillation has moved to higher order. 
The guiding system $\dot x = \e (x^2+\mu)$ can be solved exactly, and its solutions are
\begin{equation}\label{foldeg1guide}
	\begin{aligned}
		x_g(t,x_0,\mu,\e)&=\sqrt{-\mu} \tanh\left(\e\sqrt{-\mu} t+\arctanh \left(\frac{x_0}{\sqrt{-\mu}}\right)\right)\\
		&\sim x_0+(x_0^2+\mu)\e t+(x_0^2+\mu)x_0\e^2t^2+\mathcal{O}(\e^3).
	\end{aligned}
\end{equation}
These different solutions are illustrated in \cref{fig:foldsol}, and we see the consequence of the results proven above, that for $\mu<0$ the solutions of the exact and averaged systems all tend towards oscillation around the fixed points of the guiding system, but as $\mu$ moves to positive values a fold occurs and the fixed points vanish.

\subsection{Example: non-persistence of the transcritical bifurcation}\label{sec:egtrans}

Consider the following differential system, with two time-dependent coefficients at different orders of $\e$,
\begin{equation}
	\dot Y = Y^2+ \e f_1(t) Y + \e^2 f_2(t).
\end{equation}
We assume $f_1$ and $f_2$ to be $T$-periodic. 
The change of variables $Y=\e X$ for $\e\neq0$ yields
\begin{equation}
	\dot X = \e \left(X^2 + f_1(t) X + \e f_2(t) \right).
\end{equation}

Define the averages of $f_1$ and $f_2$ as
\begin{equation}
	\mu := \frac{1}{T} \int_0^T f_1(t) dt,\qquad c := \frac{1}{T} \int_0^T f_2(t) dt,
\end{equation}
and the oscillating part of $f_1$ as $\tilde{f}_1(t) := f_1(t) - \mu$. The system can then be rewritten as 
\begin{equation}
	\dot X = \e X^2 + \e \mu X + \e  \tilde{f}_1(t) X + \e^2 f_2(t).
\end{equation}
This is now in the form \cref{eq:standardsystemintro} with $N=1$, $F_1(t,X,\mu) = X^2 + \mu X +  \tilde{f}_1(t) X$, and $\tilde{F}(t,X,\mu,\e)= f_2(t)$. We then apply the change of variables given by the averaging theorem, that is, $X=x-\e(x+\e\cos t)\cos t$, obtaining
\begin{equation}
\begin{aligned}
	\dot x &= \frac{\e G(t,x,\e)}{B(t,\e)}\qquad{\rm where}\\
	&G(t,x,\e) = x^2B^2(t,\e) + (\mu+ \tilde{f}_1(t))x B(t,\e) + \e f_2(t) - xA_1'(t)\;,\\
	&B(t,\e) =1+\e A_1(t)\;.
\end{aligned}
\end{equation}
and $A_1(t)$ is such that $A_1'(t) = \tilde{f}_1(t)$. Expanding in powers of $\e$, we obtain the averaged system
\begin{equation}
	\dot x = \e (x^2+\mu x ) + \e^2\left(x^2A_1(t) +x A_1(t) A_1'(t) + f_2(t) \right) + \mathcal{O}(\e^3).
\end{equation}
It follows that the guiding system is $\dot x = g_1(x,\mu) = x^2+\mu x$ and the remainder term is $R_1(t,x,\mu,\e) = x^2A_1(t) + x A_1(t) A_1'(t) + f_2(t) + \mathcal{O}(\e)$. Thus,
\begin{equation}
	g_2(0,0) = \int_0^T R_1(t,0,0,0) dt = \int_0^T f_2(t)dt=c.
\end{equation}
If $c\neq0$, the system satisfies the hypotheses of \cref{theoremtranscriticalcatastrophe}, and the perturbation causes the described stabilisation of the catastrophe surface.

For illustration, let $f_1(t)=\mu+\sin(t)$ and $f_2(t)=c+\sin(2t)$, so we are studying the system $\dot X=\e(X^2+X\mu+X\sin(t))+\e^2(c+2\sin(2t))$. Then $\tilde f_1(t)=\sin(t)$, $A_1(t)=-\cos(t)$, and $R_1(t,x,\mu,\e)=(c-(2+x)\sin(t)\cos(t)-x^2\cos(t))$. The averaging theorem yields the system $\dot x=\e(x^2+\mu x)+\e^2(c-(2+x)\sin(t)\cos(t)-x^2\cos(t))$.

Unlike stable families, the catastrophe surface of the perturbed system will not lie close to (be a diffeomorphism of) that of the guiding system's transcritical geometry. We can examine how the catastrophe surface unfolds with $\e$ by taking the second order averaging system. We plot the zeros of this as an illustration of the catastrophe surface in \cref{fig:transsurfs}, which approximates $M_\Pi-V_{\e=0}$ for small $\e$, and coincides with it at $\e=0$. Only at $\e=0$ does the transcritical appear, though strictly with null stability since the full system is $\dot x=\e x(x^2+\mu)+\mathcal{O}(\e^2)$. 
The stability of equilibria shown on the right of Figure 3 is not given by studying the catastrophe surface, but comes from simulations (or can be verified by further stability analysis).

\begin{figure}[h!]\centering
	\includegraphics[width=0.65\textwidth]{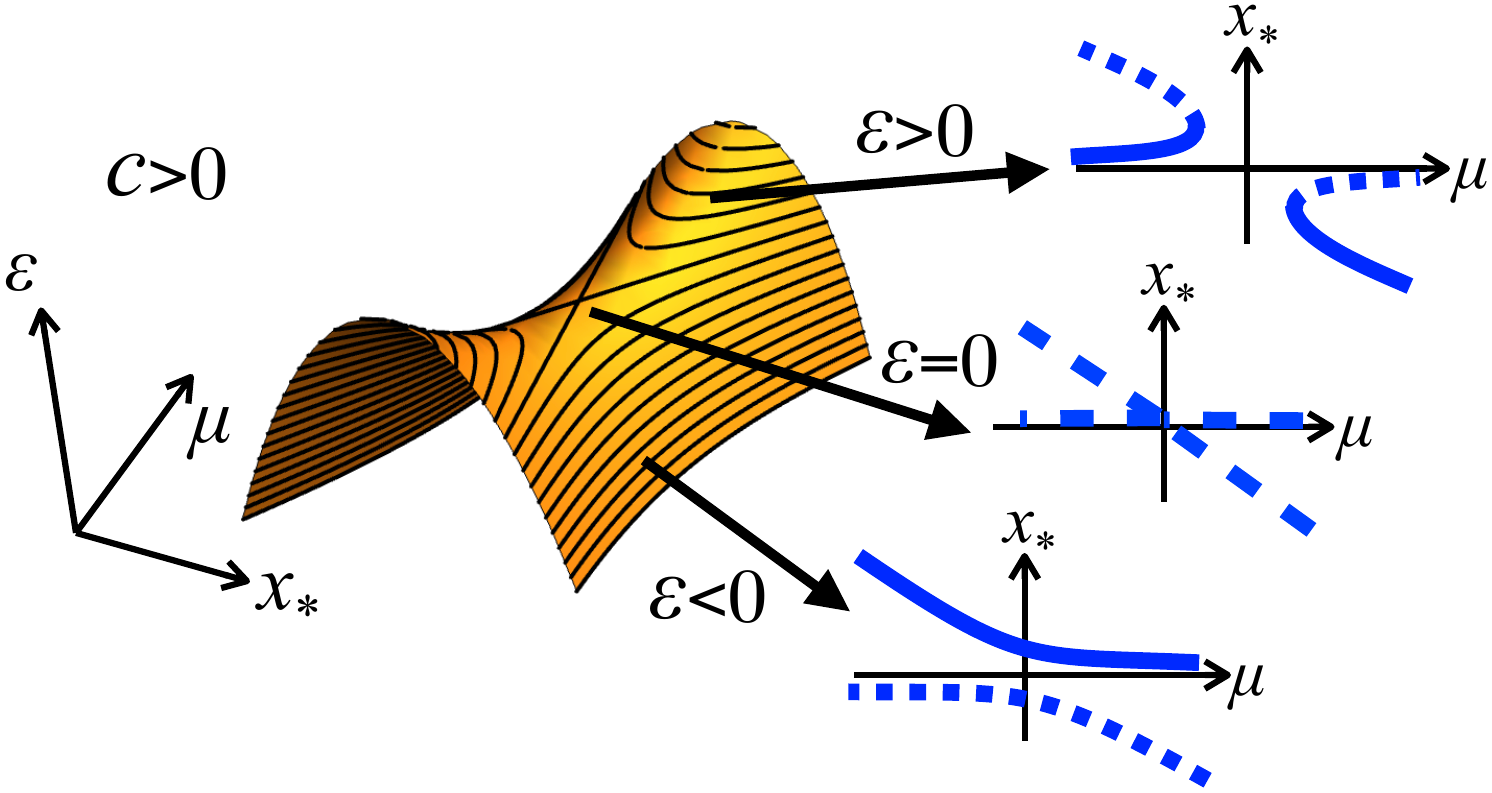}\vspace{-0.2cm}
	\caption{\sf The surface of fixed points $x^2+\mu x+\e c=0$ plotted for $c=1$ in $(x,\mu,\e)$ space (with $x_*$ denoting the fixed-point value of $x$). The transcritical bifurcation at $\e=0$ degenerates into a pair of fold bifurcations for $\e>0$ and two stable families of fixed points for $\e<0$. Sections of the surface at different $\e$ give the bifurcation diagrams with varying $\mu$ (stable/unstable branches indicated by full/dotted curves). 
	}\label{fig:transsurfs}
\end{figure}

\Cref{fig:transpoincare} shows simulations of Poincar\'e maps of the original system, which are a small perturbation of the bifurcation curves of the second order averaged system $\dot x=\e(x^2+\mu x)+\e^2c$, corresponding to the sections shown in \cref{fig:transsurfs}. 

\begin{figure}[h!]\centering
	\includegraphics[width=0.8\textwidth]{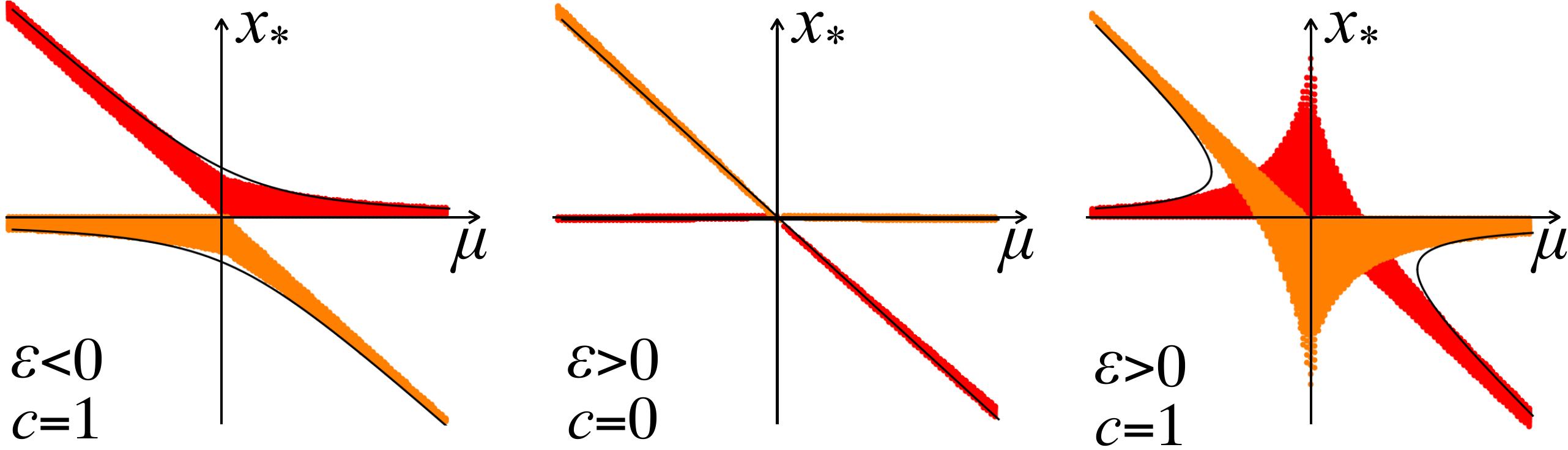}\vspace{-0.2cm}
	\caption{\sf Solutions of the perturbed transcritical system. The Poincar\'e map of $x(t)$ $\mod(t,2\pi)$, showing exact solutions converging in forward time (red) and backward time (orange) onto the stable and unstable fixed points (black curves), respectively, from initial conditions close to the fixed points if they exist, or close to the origin otherwise. The parameters used are: left $\e=-0.02,c=1$, middle $\e=0.02,c=0$, right $\e=0.02,c=1$. For $\e<0$ there are always two fixed points. For $\e>0$ and $c\neq0$ there are two fixed points only for $|\mu|>\mu_{\rm fold}$, so between the folds the solutions diverge.  
	}\label{fig:transpoincare}
\end{figure}

Lastly, \cref{fig:transsol} shows solutions for different values of $\mu$ and $\e$, showing the solutions converging onto a pair of fixed points, except for parameters values that lie between the two folds at which no fixed points exist, so solutions diverge.

\begin{figure}[h!]\centering
	\includegraphics[width=0.98\textwidth]{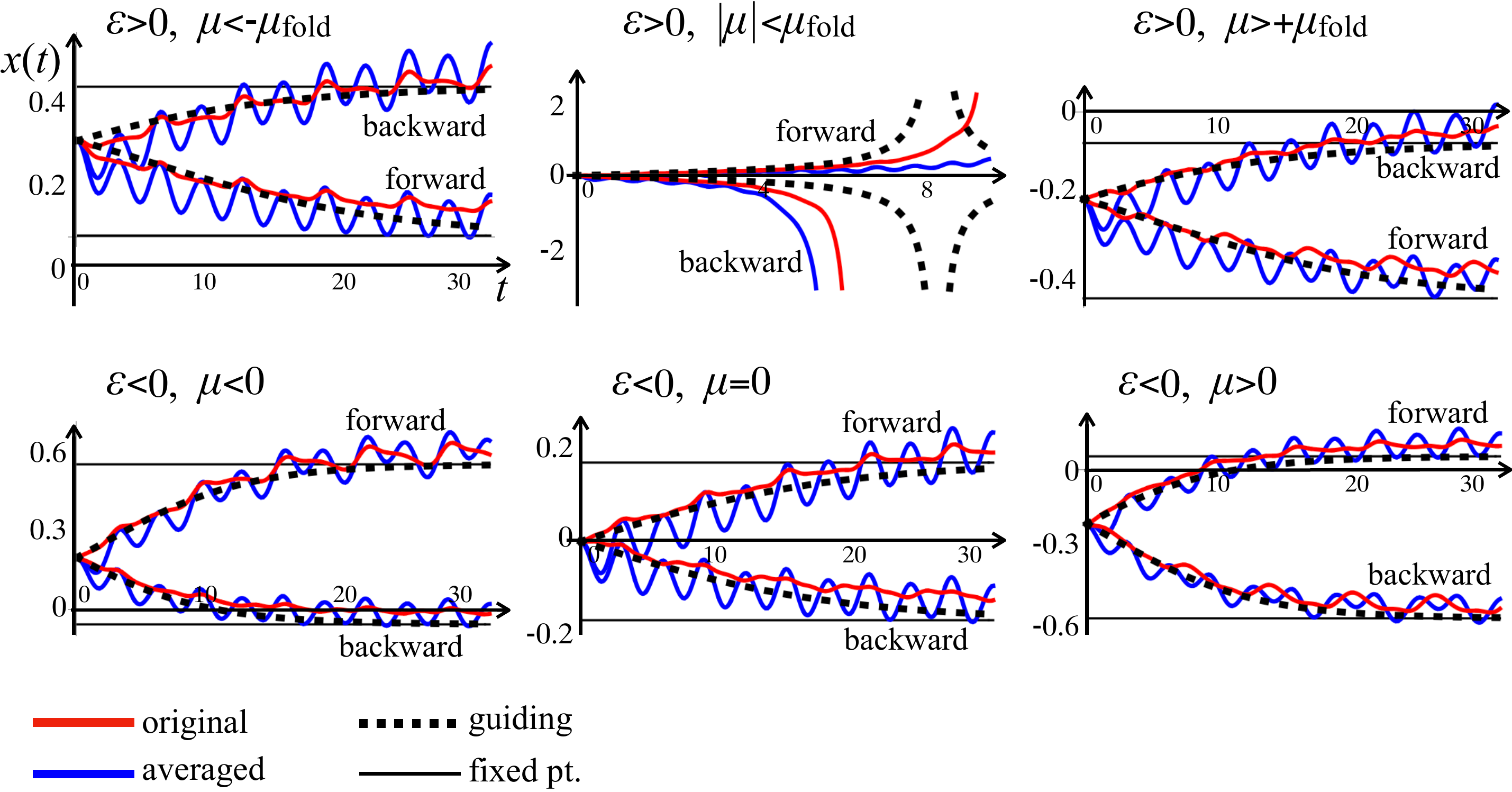}\vspace{-0.2cm}
	\caption{\sf Solutions of the perturbed transcritical system with $c=0.1$, for $\e=0.3$ (top row) and $\mu=-0.3$ (bottom row), with values $\mu=-0.5,0,0.5$, (from left to right). The original solutions (red curves) and averaged solutions (blue curves) oscillate around the guiding solutions (black dotted curves), converging in forward/backward time onto the stable/unstable fixed points (blue curves) if they exist. For $\e>0$ there are two fixed points only for $|\mu|>\mu_{\rm fold}$, so in the middle picture the solutions diverge. For $\e<0$ there are always two fixed points. 
	}\label{fig:transsol}
\end{figure}

\subsection{Example: non-persistence of the pitchfork}

Similar to the example for the transcritical, consider a system with two $T$-periodic parameters at different orders of $\e$, but this time take
\begin{equation}
	\dot Y = Y^3+ \e^2 f_1(t) Y + \e^4 f_2(t).
\end{equation}
The change of variables $Y=\e X$ for $\e\neq0$ yields
\begin{equation}
	\dot X = \e^2 \left(X^3 + f_1(t) X + \e f_2(t) \right).
\end{equation}
If $\mu$ denotes the average over $[0,T]$ of $f_1$ and $\tilde{f}_1(t):=f_1(t)-\mu$, we obtain
\begin{equation}
	\dot X = \e^2 \left(X^3 +\mu X+ \tilde{f}_1(t) X\right) + \e^3 f_2(t).
\end{equation}
This system is in the standard form with $N=2$, $F_1(t,X,\mu)=0$, $F_2(t,X,\mu) = X^2 + \mu X +  \tilde{f}_1(t) X$, and $\tilde{F}(t,x,\mu,\e)= f_2(t)$. 

We can then apply the change of variables given by the averaging theorem. Let $A_1(t)$ be such that $A_1'(t) = \tilde{f}_1(t)$. We perform the change of variables given by $X = x+ \e x A_1(t)$, obtaining
\begin{equation}
	\dot x = \e^2 (x^3+\mu x) + \e^3 \left(f_2(t) -A_1(t) \left(\mu x +x^3 \right)\right) + \mathcal{O}(\e^4).
\end{equation}
Following the naming convention of the averaging method, we have the guiding system $\dot x = g_2(x,\mu) = x^3+\mu x$ and $R_2(t,x,\mu,\e) =  f_2(t) -A_1(t) \left(\mu x +x^3 \right) + \mathcal{O}(\e)$. Thus, assuming that the average of $f_2$ over $[0,T]$ does not vanish, it follows that
\begin{equation}
	g_3(0,0) = \int_0^T R_2(\tau,0,0,0) d\tau = \int_0^T f_2(\tau) d\tau \neq 0.
\end{equation}
We are therefore within the domain of application of \cref{theorempitchforkcatastrophe}.

For illustration, let $f_1(t)=\mu+\sin(t)$ and $f_2(t)=c+\sin(2t)$, so we are studying the system $\dot X=\e^2(X^3+\mu X+ \sin(t) X)+\e^3(c+2\sin(2t))$. Then $\tilde f_1(t)=\sin(t)$, and $R_1(t,x,\mu,\e)=(c-(2+x)\sin(t)\cos(t)-2x^3\cos(t))$, hence the averaging theorem yields the system $\dot x=\e(x^3+\mu x)+\e^2(c-(2+x)\sin(t)\cos(t)-2x^3\cos(t))$. 

\Cref{fig:pitchsurfs} illustrates the cusp catastrophe surface formed by the fixed points in $(x,\mu,\e)$ space, approximating $M_\Pi-U_{\e=0}$. For $\e\neq0$ the bifurcation diagram, instead of a pitchfork, exhibits a branch of persistent equilibria and a fold. Only at $\e=0$ does the pitchfork appear, though strictly with null stability since the full system is $\dot x=\e(x^3+\mu x)+\mathcal{O}(\e^2)$.

\begin{figure}[h!]\centering
	\includegraphics[width=0.7\textwidth]{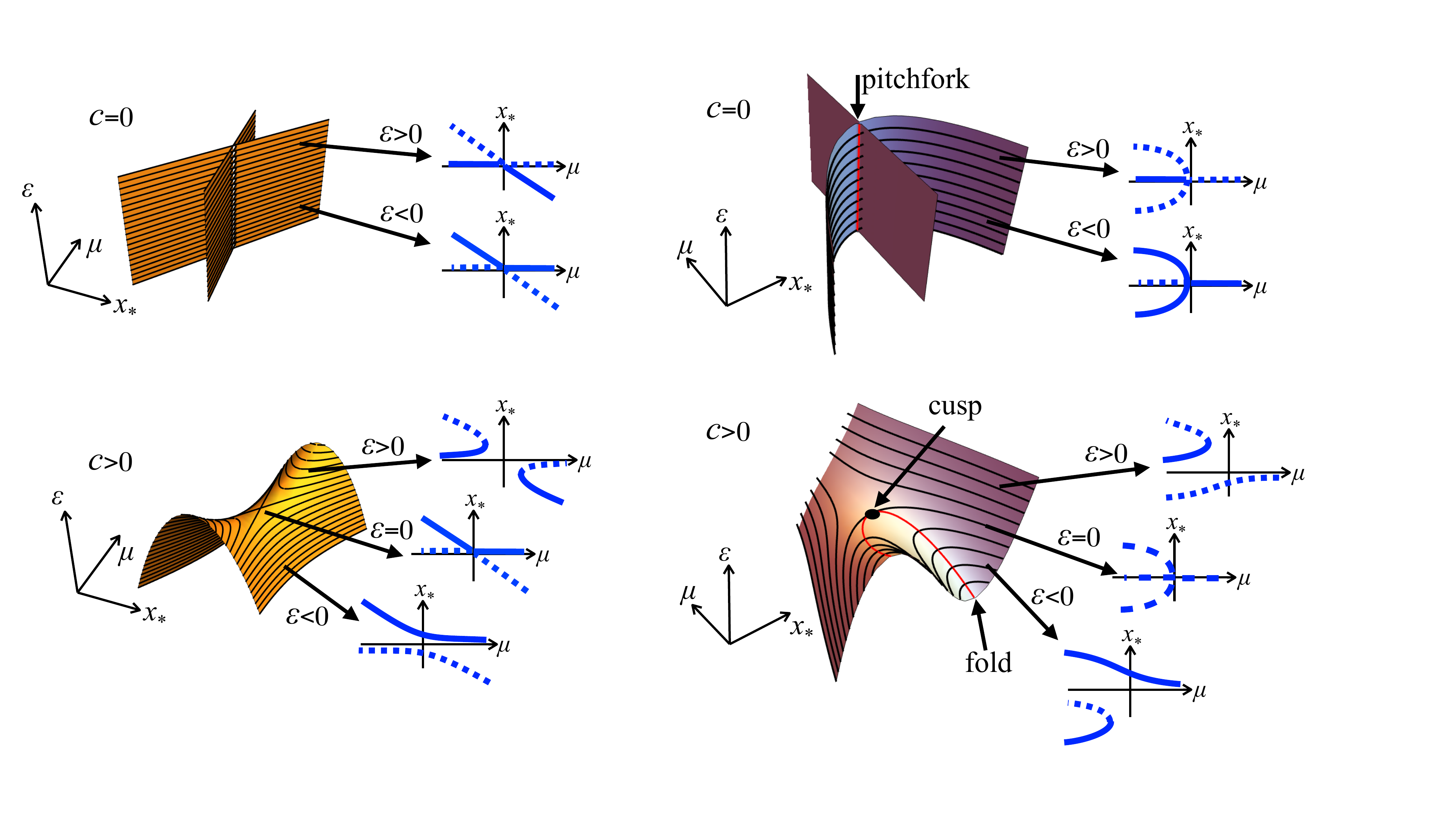}\vspace{-0.2cm}
	\caption{\sf A cusp bifurcation appearing for $c=1$. The fixed points are plotted in $(x,\mu,\e)$ space (with $x_*$ denoting the fixed point value of $x$). Sections of this at different $\e$ give the bifurcation diagrams with varying $\mu$, showing a fold and a persistent fixed point for $\e\neq0$ (stable/unstable branches indicated by full/dotted curves). 
	}\label{fig:pitchsurfs}
\end{figure}

\subsection{A counter-example to topological equivalence: the saddle-focus} \label{sec:counterexample}

Let us consider the two-dimensional family
\begin{equation}
	(\dot x_1,\dot x_2)=\e\left(x_2,\;x_1^2+\mu+\sin(t)\right)+\e^2\left(0,x_2(c_0+c_1x_1+x_1^3)\right)\;.
\end{equation}
If we omit the time-dependent term $\e\sin(t)$, this takes the form of the saddle-focus bifurcation of Dumortier-Roussarie-Sotomayor, given in \Cref{sec:saddlefocus}. 
For the averaged system we obtain
\begin{equation}
	(\dot x_1,\dot x_2)=\e\left(x_2,\;\mu+x_1^2\right)+\e^2\left(-\cos(t),x_2(c_0+c_1x_1+x_1^3)\right)\; + \mathcal{O}(\e^3).
\end{equation}
The guiding system $(\dot x_1,\dot x_2)=\e\left(x_2,\;\mu+x_1^2\right)$ is structurally unstable.

The purpose of this example is to show that we do not need structural stability in the guiding system, or topological equivalence to the form of bifurcation we derive, to apply our main theorem. Of course, we should not expect a full topological description of the map, as discussed in \Cref{sec:onedimensionaltopologicalequivalence}.

\Cref{fig:sadfocus} illustrates solutions of the exact $\e$-perturbed system, and the averaged system, for $\mu=-0.2$, for which the guiding system has a saddle and a focus (for $\mu>0$ the guiding system has no equilibria, and correspondingly all solutions of the averaged and perturbed systems diverge). The bottom row shows system where the guiding system has, for different $c_1$ and $c_0$ values from left to right: a stable equilibrium, a centre, a stable limit cycle, respectively, corresponding in the full system to a stable periodic orbit, a family of invariant tori (whether these are hyperbolic when the guiding system is perturbed would require more in-depth analysis), and a stable invariant torus. 

A magnification of the first case from \cref{fig:sadfocus} is shown in \cref{fig:sadfocuszoom} to more closely reveal the comparison between the fast oscillations of the exact system, and the slow oscillations of the averaged system.

\begin{figure}[h!]\centering
	\includegraphics[width=\textwidth]{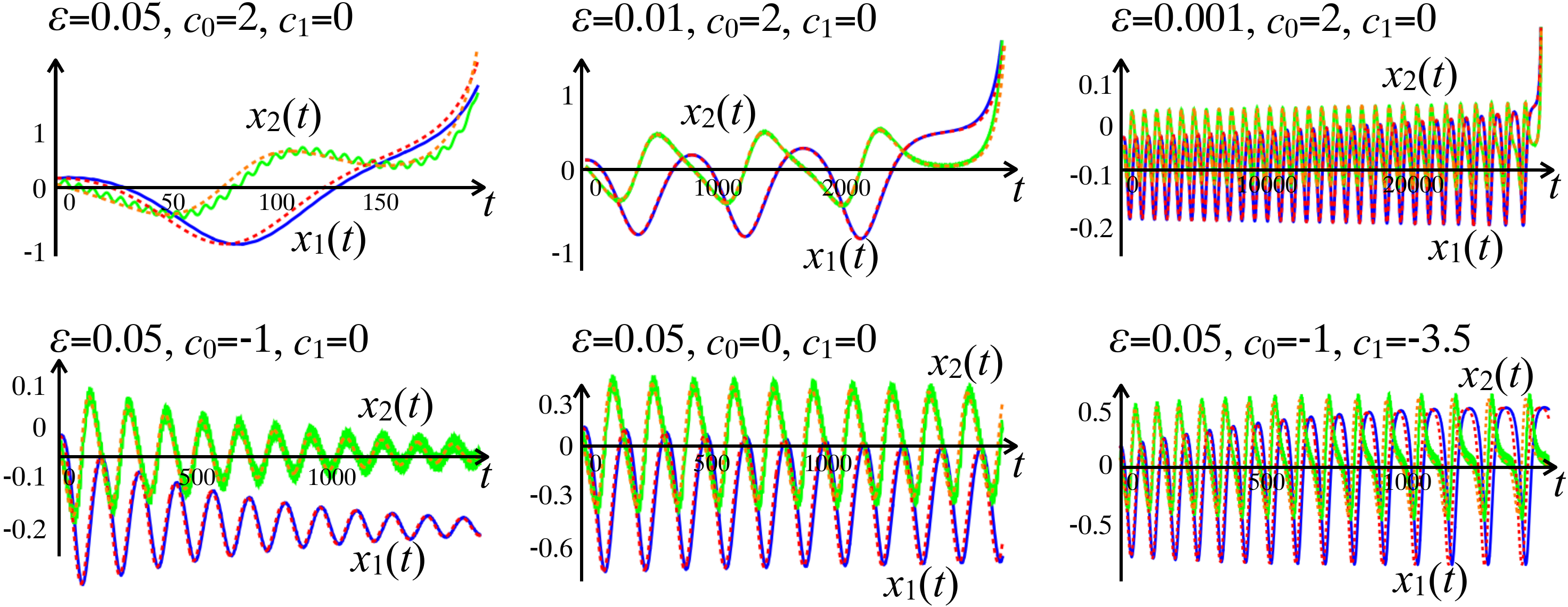}\vspace{-0.2cm}
	\caption{\sf Solutions of the system exhibiting a saddle-focus. Initial conditions $(x_1,x_2)=(0.1,0)$. In all cases we take $\mu=-0.2$ (as $\mu>0$ merely gives diverging solutions). Other parameters are given in the figure. Forward-time solutions shown only, showing the exact solutions for $x_1/x_2,$ components in blue/green, and averaged system $x_1/x_2,$ in dotted red/orange. The top row shows systems with unstable equilibria for $c_0=2$, $c_1=0$, and different $\e$ values. The bottom row shows system where the guiding system has for different $c_1$ and $c_0$ values (from left to right): a stable equilibrium, a centre, a stable limit cycle, respectively. 
	}\label{fig:sadfocus}
\end{figure}
\begin{figure}[h!]\centering
	\includegraphics[width=0.45\textwidth]{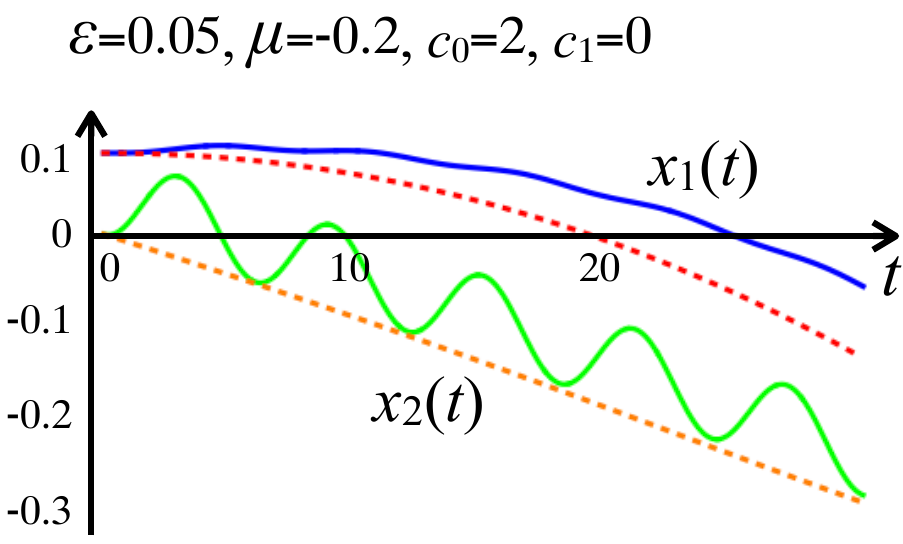}\vspace{-0.2cm}
	\caption{\sf Zoom in on a case of \cref{fig:sadfocus}, showing the difference between exact and averaged solution.
	}\label{fig:sadfocuszoom}
\end{figure}
%

%%%%%%%%%%%%%%%%%%%%%%%%%%%%%%%%%%%%%%%%%%%%%%%%%%%%%%%%%%%%
%%%%%%%%%%%%%%%%%%%%%%%%%%%%%%%%%%%%%%%%%%%%%%%%%%%%%%%%%%%%

\section{Preliminaries} \label{sec:prelim}

In this section, we introduce the basic framework necessary to prove in \Cref{sec:proofs} the theorems from \Cref{sec:results}. The reason for this introduction is twofold: it should help the reader's comprehension, avoiding long detours in multiple different references; and it should also establish notation and nomenclature. 

The material herein presented is already known, but we include proofs where the steps and notation are essential to the ensuing analysis.

\subsection{The Poincaré map and the displacement function of order $\ell$}

The use of displacement functions to study Poincaré maps has numerous examples in the literature (see, for instance, \cite{BUICA20047,CARMONA2023108501,CHICONE1991268,Candido_2017}). It reduces the problem of finding fixed points of a Poincaré map $\Pi$ to searching for zeroes of a displacement function $\Delta$. Recall that here our interest is in singular zeroes of the guiding system at the origin for $\mu=0$, that is, satisfying the conditions \cref{hypothesissingular1,hypothesissingular2} at the start of \Cref{sec:maintheorem}.

If $x(t,x_0,\mu,\e)$ denotes the solution of \cref{eq:completeaveragedsystemintro} satisfying $x(0,x_0,\mu,\e) = x_0$, then the family of stroboscopic Poincaré maps $\Pi$ is given by $\Pi(x_0,\mu,\e):=x(T,x_0,\mu,\e)$. The fact that this map is well-defined at least locally is guaranteed by the following result.
\begin{lemma} \label{lemma.timeTflow}
	Let $p \in D$ be given. There are a neighbourhood $U_p$ of $p$, a compact $K \subset \Sigma$ containing the origin, and $\e_M>0$ such that, for each $(x_0,\mu,\e) \in U_p \times K \times [-\e_M,\e_M]$, the solution $t\mapsto x(t,x_0,\mu,\e)$ of \cref{eq:completeaveragedsystemintro} is well-defined for $t \in [0,T]$. It is also smooth in $(x_0,\mu,\e)$ in that domain.
\end{lemma}
\begin{proof}
	Let $G(t,x,\mu,\e) = \sum_{i=1}^N \e^{i-1} g_i(x) + \e^{N} r_N (t,x,\mu,\e)$, so that \cref{eq:completeaveragedsystemintro} can be written $\dot x = \e G(t,x,\mu,\e)$. Choose $\delta>0$ such that the open ball $B_p(2\delta)$ centered at $p$ is contained in $D$, and let $K \subset \Sigma$ be a compact set containing the origin. Periodicity in time guarantees that $M:=\sup \{\|G(t,x,\mu,\e)\|: (t,x,\mu,\e) \in \R \times B_p(2\delta) \times K \times [-\frac{\e_0}{2},\frac{\e_0}{2}] \} $ is finite. Finally, choose $\e_M := \min \left\{ \frac{\e_0}{2}, \frac{\delta}{2 T M}\right\}$.
	
	Fix values of the parameters $\mu \in K$ and $\e \in [-\e_M,\e_M]$ and set $U_p:=B_p(\delta)$. By the Picard-Lindelöf Theorem, the solution $t \mapsto x(t,x_0,\mu,\e)$ of \cref{eq:completeaveragedsystemintro} with initial condition $x_0 \in B_p(\delta)$ is defined on a maximum interval of existence $I =(\omega^-(x_0), \omega^+(x_0)) \subset \R$, with $\omega^-(x_0) <0$ and $\omega^+(x_0)>0$. It is sufficient for us to prove that $\omega^+(x_0) > T$.
	
	By contradiction, assume that $\omega^+(x_0) \leq T$. This can only be if $x(t,x_0,\mu,\e)$ leaves $B_p(2\delta)$ at some $t_* \in (0,T]$, otherwise the solution would be well-defined by the Picard-Lindelöf Theorem. However, since $x(t,x_0,\mu,\e)$ is a solution of \cref{eq:completeaveragedsystemintro}, it follows that
	\begin{equation}
		\|x(t_*,x_0,\mu,\e) - x_0\| \leq \left\| \int_0^{t_*} \e G(x(s),x_0,\mu,\e) ds \right\| \leq \e_M T M \leq \frac{\delta}{2}. 
	\end{equation}
	Therefore, considering that $x_0 \in B_p(\delta)$, it follows at once that $x(t_*,x_0,\mu,\e) \in B_p(2\delta)$, which contradicts the definition of $t_*$. Smoothness follows from regularity of solutions of smooth differential equations with respect to initial conditions and parameters (see \cite[Chapter V]{Hartman1964OrdinaryDE}, for instance).
\end{proof}

We then introduce the displacement function $\Delta$, given by
\begin{equation}
\begin{aligned}
	\Delta(x_0,\mu,\e) &= \Pi(x_0,\mu,\e) - x_0 \nonumber\\
			&= \e^\ell \int_0^T g_\ell(x(\tau,x_0,\mu,\e),\mu) + \e R_\ell(\tau,x(\tau,x_0,\mu,\e),\mu,\e) d\tau.
\end{aligned}
\end{equation}
It is clear from \cref{lemma.timeTflow} that $\Delta$ is locally smooth near any point in $D$, and that zeroes of $\Delta$ correspond to fixed points of $\Pi$, which correspond to $T$-periodic solutions of \cref{eq:completeaveragedsystemintro} and, consequently, \cref{eq:standardsystemintro}. 
Since our interest lies essentially on the case $\e\neq0$, let us dispose of the $\e^\ell$ term in the displacement function and define $\Delta_\ell: \tilde{D} \times \Sigma \times (-\e_0,\e_0) \to \R^n$ by 
\begin{equation} \label{eq:displacementellintro}
	\Delta_\ell(x_0,\mu,\e) = \int_0^T g_\ell(x(\tau,x_0,\mu,\e),\mu) + \e R_\ell(\tau,x(\tau,x_0,\mu,\e),\mu,\e) d\tau.
\end{equation}

Throughout this paper, we will call $\Delta_\ell$ the \textit{displacement function of order $\ell$}. Once again, it follows from \cref{lemma.timeTflow} that $\Delta_\ell$ must be smooth near any chosen singular point in the domain for values of the parameters close to zero. The key step in our analysis is noticing that \cref{eq:displacementellintro} guarantees that $\Delta_\ell$ provides an unfolding of the germ of the map $x \mapsto T g_\ell(x,0)$. The notions of unfolding and germ will be summarized in \Cref{sec:germsandkequivalence}.

Observe that, by definition,
\begin{equation}\label{eq:poincareelldisplacementintro}
	\Pi(x_0,\mu,\e) = x_0 +   \e^\ell\Delta_\ell(x_0,\mu,\e).
\end{equation}
This identity is essential to our analysis as it relates the $\Pi$ Poincaré map and the displacement function $\Delta_\ell$ of order $\ell$. 
More precisely, it is easy to establish from \cref{eq:poincareelldisplacementintro} that $M_\Pi$ can be identified with the set $Z_{\Delta_\ell} \cup \{(x,\mu,0) : (x,\mu) \in D \times \Sigma \}$, where $Z_{\Delta_\ell}$ is the set of zeroes of $\Delta_\ell$. The study of $M_\Pi$ can then be performed by analysing the zeroes of the displacement function of order $\ell$.

\subsubsection{Fixed points and zeroes}

When referring to and classifying zeroes of families of functions, we use the following definition.  
\begin{definition}
	Let $U$ be an open subset of $\R^n$, $n \in \mathbb{N}$, and $V$ an open subset of $\R^k$, $k \in \mathbb{N}$, and $F:U \times V \to \R^m$ be any family of functions. We define
	\begin{enumerate}[label=(\alph*)]
		\item the zero set of $F$ by $Z_F = \{(x,\eta) \in U \times V: F(x,\eta) = 0\}$;
		\item the function $F_\eta: U \to \R^m$, where $\eta \in V$, by $F_\eta(x) = F(x,\eta)$;
		\item the zero set of $F_\eta$, where $\eta \in V$, by $Z_F(\eta)=\{x \in U: F(x,\eta)=0\}$.
	\end{enumerate}  
	Elements of $Z_F$ and $Z_F(\eta)$ are called, respectively, zeroes of $F$ and zeroes of $F_\eta$.
\end{definition}
We can then connect fixed points of $\Pi$ with zeroes of $\Delta_\ell$, as well as expressing $M_\Pi$ with the help of the set of zeroes of $\Delta_\ell$. The proofs of these follow directly from \cref{eq:poincareelldisplacementintro}.
\begin{proposition} \label{propositionfixedpointzero}
	Let $\mu \in \Sigma$ and $\e \in (-\e_0,\e_0) \setminus \{0\}$ be given. Then, $x \in D$ is a fixed point of $x \mapsto \Pi(x,\mu,\e)$ if, and only if, $(x,\mu,\e)$ is a zero of $\Delta_\ell$. 
\end{proposition}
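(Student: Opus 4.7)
The plan is to deduce the statement directly from the identity \cref{eq:poincareelldisplacementintro}, which already pins down the relationship between $\Pi$ and $\Delta_\ell$. The only ingredient beyond that identity is the hypothesis $\e \neq 0$, which will let us cancel the scalar factor $\e^\ell$.

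First I would fix the parameters $\mu \in \Sigma$ and $\e \in (-\e_0,\e_0)\setminus\{0\}$, and take an arbitrary $x \in D$. By the definition of the stroboscopic Poincaré map together with \cref{eq:poincareelldisplacementintro}, the equation $\Pi(x,\mu,\e) = x$ is equivalent to
\begin{equation}
    \e^\ell \Delta_\ell(x,\mu,\e) = 0.
\end{equation}
Since $\e \neq 0$, the scalar $\e^\ell$ is nonzero, so this vector equation in $\R^n$ is equivalent to $\Delta_\ell(x,\mu,\e)=0$, i.e., to $(x,\mu,\e) \in Z_{\Delta_\ell}$. Both implications (fixed point $\Rightarrow$ zero, and zero $\Rightarrow$ fixed point) follow simultaneously from this chain of equivalences.

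There is essentially no obstacle here: the content of the proposition is just the observation that the factor $\e^\ell$ multiplying $\Delta_\ell$ in \cref{eq:poincareelldisplacementintro} is invertible in the ring of scalars precisely when $\e \neq 0$, which is the regime the statement restricts to. The role of this proposition in the sequel is to justify transferring the analysis of fixed points of $\Pi$ (hence of $T$-periodic solutions of \cref{eq:standardsystemintro}) to the study of the zero set $Z_{\Delta_\ell}$, where the $\mathcal{K}$-theoretic machinery introduced in \cref{sec:germsandkequivalence} can be applied, and in particular to identify $M_\Pi$, away from $\e=0$, with $Z_{\Delta_\ell}$.
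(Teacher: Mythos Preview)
Your proof is correct and is exactly the argument the paper intends: the paper simply states that the proposition follows directly from \cref{eq:poincareelldisplacementintro}, and you have spelled out that one-line deduction.
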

\begin{corollary}\label{corollarycatastrophesurface}
	$M_\Pi=Z_{\Delta_\ell} \cup \{(x,\mu,0) : (x,\mu) \in D \times \Sigma \}.$
\end{corollary}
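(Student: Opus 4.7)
The plan is to unpack both sides of the claimed equality directly from the identity \cref{eq:poincareelldisplacementintro}, namely
\begin{equation*}
\Pi(x_0,\mu,\e) = x_0 + \e^\ell \Delta_\ell(x_0,\mu,\e),
\end{equation*}
which reduces the fixed-point condition $\Pi(x,\mu,\e)=x$ to $\e^\ell \Delta_\ell(x,\mu,\e)=0$. A single product vanishes iff one of its factors vanishes, so I split on whether $\e=0$ or $\e\neq 0$.

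For the $\e\neq 0$ half, I would invoke \cref{propositionfixedpointzero} (or, equivalently, cancel $\e^\ell$ directly from $\e^\ell\Delta_\ell=0$) to identify the set $\{(x,\mu,\e)\in M_\Pi : \e\neq 0\}$ with $\{(x,\mu,\e)\in Z_{\Delta_\ell} : \e\neq 0\}$. For the $\e=0$ half, I would note that setting $\e=0$ in \cref{eq:standardsystemintro} gives $\dot X\equiv 0$, so the solution through any $X_0$ is constant, whence $\Pi(x,\mu,0)=x$ for every $(x,\mu)\in D\times\Sigma$; hence the entire slice $\{(x,\mu,0) : (x,\mu)\in D\times\Sigma\}$ is contained in $M_\Pi$. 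Conversely, any element of $M_\Pi$ with $\e=0$ trivially lies in this slice.

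Both inclusions then combine: if $(x,\mu,\e)\in M_\Pi$, either $\e\neq 0$ and $(x,\mu,\e)\in Z_{\Delta_\ell}$, or $\e=0$ and $(x,\mu,\e)$ lies in the trivial slice; conversely, any point of $Z_{\Delta_\ell}$ satisfies $\e^\ell\Delta_\ell=0$, and any point of the trivial slice is a fixed point by the vanishing of the vector field at $\e=0$, so both are contained in $M_\Pi$.

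There is essentially no obstacle here; the statement is a bookkeeping corollary of the algebraic identity \cref{eq:poincareelldisplacementintro} together with the fact that \cref{eq:standardsystemintro} has no leading (order $\e^0$) term. The only point worth being careful about is that the $\e=0$ slice must be included \emph{in its entirety} — not only at points where $\Delta_\ell$ happens to vanish — because the factor $\e^\ell$ kills the displacement identically, not because of any property of $\Delta_\ell$ itself. This is why the corollary is phrased as a union rather than as an equality with $Z_{\Delta_\ell}$ alone.
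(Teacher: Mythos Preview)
Your proof is correct and takes essentially the same approach as the paper, which simply states that the corollary follows directly from \cref{eq:poincareelldisplacementintro} without further elaboration. Your unpacking of the identity $\Pi(x,\mu,\e)=x \iff \e^\ell\Delta_\ell(x,\mu,\e)=0$ into the two cases $\e=0$ and $\e\neq 0$ is exactly what is intended, and your remark about why the full $\e=0$ slice must be adjoined is apt.
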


\bigskip\bigskip\bigskip
%%%%%%%%%%%%%%%%%%%%%%%%%%%%%%%%%%%%%%%%%%%%%%%%%%%%%%%%%%%%
%%%%%%%%%%%%%%%%%%%%%%%%%%%%%%%%%%%%%%%%%%%%%%%%%%%%%%%%%%%%
\subsection{Germs and $\mathcal{K}$-equivalence} \label{sec:germsandkequivalence}
\subsubsection{Germs}
An important concept in singularity theory is that of \textit{germs}. Essentially, a germ of an object captures only its only local properties. It is usually expressed as an equivalence class. We define that concept below, and set the notation we will be adopting throughout the remainder of the paper.

For convenience, we assume without loss of generality that the point near which our analysis is done will always be the origin, and we say that $U \in \mathcal{N}_0(S)$ if $U$ is an open neighbourhood of the origin contained in the set $S$. The first two definitions build the concept of germs of maps.
\begin{definition}
	Let $n,p \in \mathbb{N}$ and $U,U' \in \mathcal{N}_0(\R^n)$. Two maps $f:U \to \R^p$ and $g:U' \to \R^p$ are said to be germ-equivalent at the origin if there is $U'' \in \mathcal{N}_0(U \cap U')$ such that $f|_{U''} = g|_{U''}$.
\end{definition}
\begin{definition}
	Let $n,p \in \mathbb{N}$ and $U \in \mathcal{N}_0(\R^n)$. The germ of a map $f: U \to \R^p$ at the origin is the equivalence class $[f]$ of $f$ under germ-equivalence at the origin. The set of germs of functions from $\R^n$ to $\R^p$ at the origin is denoted by $\mathcal{E}_n^p$. If $p=1$, we usually simplify the notation to simply $\mathcal{E}_n$
\end{definition}
The set $\mathcal{E}_n^p$ is a vector space over $\R$ with the naturally induced operations of sum of functions and product of a function by a real number. $\mathcal{E}_n$ is itself a ring with the usual operations of sum and product of real-valued functions, and $\mathcal{E}_n^p$ can be also seen as the free module of rank $p$ over $\mathcal{E}_n$, i.e, any $[f] \in \mathcal{E}_n^p$ can be seen as a $p$-sized vector with entries in $\mathcal{E}_n$.

In singularity theory, it is usually useful to distinguish the special class of germs of diffeomorphisms as follows. 
\begin{definition} \label{def:germslocaldiff}
	Let $n \in \mathbb{N}$. $[\phi] \in \mathcal{E}_n^n$ is said to be the germ of a local diffeomorphism at the origin if there is one element $\phi$ in the class $[\phi]$ for which
	\begin{enumerate}
		\item $\phi(0) = 0$;
		\item $D\phi(0)$ is invertible.
	\end{enumerate} 
	The set of germs of local diffeomorphisms at the origin on $\R^n$ is denoted by $L_n$.
\end{definition}
Observe that $L_n$ is a group under the natural operation induced by composition. Moreover, the group $L_n$ acts on $\mathcal{E}_n^p$ on the right by the operation induced naturally by composition. 

From now on, we adopt the notation $[f]:(\R^n,0) \to (\R^p,0)$ to mean that $[f] \in \mathcal{E}_n^p$ and that $f(0)=0$. Equivalently, we may say that $[f] \in \mathcal{Z}_n^p$. We proceed now to the crucial concept of unfolding of a germ. 
\begin{definition}
	A $k$-parameter unfolding of a germ $[f]: (\R^n,0) \to (\R^p,0)$ is a germ $[\tilde{F}] : (\R^{n+k},0) \to (\R^{p+k},0)$ such that
	\begin{enumerate}
		\item a representative $\tilde{F}$ of $[\tilde{F}]$ is of the form $\tilde{F}(x,\eta) = (F(x,\eta),\eta)$;
		\item $F(x,0) = F_0(x) = f(x)$.
	\end{enumerate}
	The set of $k$-parameter unfoldings is denoted by $\mathcal{Z}_{n,k}^p$. More specifically, the set of $k$-parameter unfoldings of the identity in $\R^n$ is denoted by $L_{n,k}$.
\end{definition}

Finally, we present some algebraic definitions that will be useful when defining $\mathcal{K}$-equivalence.
\begin{definition}
	$\text{GL}_p(\mathcal{E}_n)$ is the set of $p \times p$ matrices $[M]$ with entries in $\mathcal{E}_n$ and for which $\det M(0) \neq0$.
\end{definition}
Any $[M] \in \text{GL}_p(\mathcal{E}_n)$ acts on $\mathcal{E}_n^p$ as matrix-vector multiplication with entries in $\mathcal{E}_n$.

\subsubsection{$\mathcal{K}$-equivalence}
The concept of $\mathcal{K}$-equivalence - also known as contact equivalence (see \cite{Golubitsky1985,Mather1968,Montaldi_2021}) or V-equivalence (see \cite{Martinet76}) - is part of the standard theory of singularities. We briefly introduce it here, confined to what is necessary to our discussion. The interested reader is referred to the more thorough presentation in \cite{Montaldi_2021}.

\begin{definition}
	Two germs $[f],[g] \in \mathcal{Z}_n^p$ are said to be $\bm{\mathcal{K}}$-\textbf{equivalent} if there are $[\phi] \in L_n$ and $[M] \in \text{GL}_p(\mathcal{E}_n)$ such that $[f] =[M] \cdot [g] \circ [\phi]$.
\end{definition}

The concept of $\mathcal{K}$-equivalence can also be used to study families of maps, through the ideas of $\mathcal{K}$-induction and $\mathcal{K}$-equivalent unfoldings, which are developed in the next definitions.
\begin{definition}
	Two unfoldings $[\tilde{F}],[\tilde{G}] \in \mathcal{Z}^p_{n,k}$ of the same germ $[f] \in \mathcal{Z}^p_n$ are said to be $\bm{\mathcal{K}}$\textbf{-isomorphic} if, for any representatives $\tilde{F}(x,\eta)=(F(x,\eta),\eta)$ and $\tilde{G}(x,\eta)=(G(x,\eta),\eta)$ of $[\tilde{F}]$ and $[\tilde{G}]$, respectively, there are a smooth function $\alpha$, defined on a neighbourhood $U \times V$ of the origin in $\R^n \times \R^k$ and satisfying $\alpha(x,0) = x$, and a smooth matrix function $Q$, also defined on $U \times V$ and satisfying $Q(x,0)=I_p$, such that the identity
	\begin{equation}
		F(x,\eta) = Q(x,\eta) \cdot G(\alpha(x,\eta),\eta)
	\end{equation}
	holds in $U \times V$.
\end{definition}

\begin{definition}
	Let $[\tilde{F}] \in \mathcal{Z}_{n,l}^p$ and $[h]:(\R^k,0) \to (\R^l,0)$. The pullback of $[\tilde{F}]$ by $[h]$, denoted by $[h]^*[\tilde{F}]$, is the unfolding $[\tilde{P}] \in \mathcal{Z}^p_{n,k}$ given by
	\begin{equation}
		\tilde{P}(x,\eta) = (F(x,h(\eta)),\eta).
	\end{equation}
\end{definition}

\begin{definition} \label{definitioncontactinduced}
	The unfolding $[\tilde{F}] \in \mathcal{Z}^p_{n,k}$ of the germ $[f] \in \mathcal{Z}_n^p$ is said to be $\bm{\mathcal{K}}$\textbf{-induced} by the unfolding $[\tilde{G}] \in\mathcal{Z}^p_{n,l}$ of the same germ via $[h]:(\R^k,0) \to (\R^l,0)$ if the unfoldings $[\tilde{F}]$ and $[h]^*[\tilde{G}]$ are $\mathcal{K}$-isomorphic. In other words, if $\tilde{F}(x,\eta) = (F(x,\eta),\eta)$, $\tilde{G}(x,\xi)=(G(x,\xi),\xi)$, and $h(\eta)$ are representatives of $[\tilde{F}]$, $[\tilde{G}]$, and $[h]$, respectively, there are neighbourhoods of the origin $U \subset \R^n$, $V \subset \R^k$, and smooth functions $\alpha:U \times V \to \R^n$ and $Q: U \times V \to \R^{p \times p}$ such that $h(0)=0$, $\alpha(x,0)=x$, $Q(x,0)=I_p$, and
	\begin{equation} \label{eq:definitionkinducedunfolding}
		F(x,\eta) = Q(x,\eta) \cdot G(\alpha(x,\eta),h(\eta))
	\end{equation}
	for $(x,\eta) \in U \times V$.
\end{definition}

\begin{definition} \label{definitioncontactequivalent}
	The unfolding $[\tilde{F}] \in \mathcal{Z}^p_{n,k}$ of the germ $[f] \in \mathcal{Z}_n^p$ is said to be $\bm{\mathcal{K}}$\textbf{-equivalent} to the unfolding $[\tilde{G}] \in\mathcal{Z}^p_{n,k}$ of the same germ if there is $[h] \in L_k$ such that the unfolding $[\tilde{F}]$ is $\mathcal{K}$-induced by $[\tilde{G}]$ via $[h]$.
\end{definition}

\subsubsection{Versality and codimension}
One of the central concepts of singularity theory is that of versality. In essence, an unfolding of a germ of a map is said to be versal if it induces all other possible unfoldings of the same germ. This means that all the possible unfoldings of that germ are codified in a versal unfolding, so that a versal unfolding carries all the information needed to unfold a germ. 

Naturally, different equivalence relations give rise to different notions of induction and equivalence, and thus also different notions of versality. In this paper, we are interested in $\mathcal{K}$-versality, i.e, versality with respect to $\mathcal{K}$-equivalence.

\begin{definition} \label{definitioncontactversal}
	The unfolding $[\tilde{F}] \in \mathcal{Z}^p_{n,k}$ of the germ $[f] \in \mathcal{Z}_n^p$ is said to be $\bm{\mathcal{K}}$-\textbf{versal}, if any other unfolding $[\tilde{G}] \in\mathcal{Z}^p_{n,k}$ of $[f]$ is $\mathcal{K}$-induced by $[\tilde{F}]$ via the germ of some mapping $[h]$.
\end{definition}

The theory of singularities provides a number of results aimed at verifying $\mathcal{K}$-versality, one of the its products being the important concept of codimension of a germ. Below, we present those results and the concepts involved. With the exception of \cref{propositionpushforwardinduction}, for which we provide a proof on account of its specificity, all other proofs are well-known and can be found, for instance, in \cite{Montaldi_2021}.

\begin{definition}
	Let $\bm{X}^0_n$ be the set of germs of $n$-dimensional vector fields at the origin having zero as equilibrium and $\bm{M}^0_p$ be the set of germs of matrix functions $\R^n \to \R^{p\times p}$ at the origin of $\R^n$. The extended $\mathcal{K}$-tangent space of a germ $[f] \in \mathcal{Z}_n^p$ is defined as the subspace of the vector field $\mathcal{E}_n^p$ given by
	\begin{equation}
		T_{\mathcal{\mathcal{K}},e} f := \{[Df] \cdot [X] + [M] \cdot [f]: [X] \in \bm{X}^0_n, [M] \in \bm{M}^0_p \}.
	\end{equation}
\end{definition}
\begin{definition}
	The $\mathcal{K}$-codimension of a germ $[f] \in \mathcal{Z}_{n}^p$, denoted by the symbol $\textnormal{codim}_\mathcal{K} ([f])$, is the codimension in $\mathcal{E}_n^p$ of the linear subspace $T_{\mathcal{\mathcal{K}},e} f$, or, which is the same, 
	\begin{equation}
		\textnormal{codim}_\mathcal{K} ([f]) = \dim\left(\mathcal{E}_n^p / T_{\mathcal{\mathcal{K}},e} f \right).
	\end{equation}
\end{definition}

Having defined codimension in the context of $\mathcal{K}$-equivalence, we proceed to stating two fundamental results relating versality and codimension.
\begin{proposition}
	Two $\mathcal{K}$-equivalent germs have the same $\mathcal{K}$-codimension. 
\end{proposition}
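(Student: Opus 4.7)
My plan is to construct an explicit linear isomorphism of $\R$-vector spaces $\mathcal{E}_n^p / T_{\mathcal{K},e}g \cong \mathcal{E}_n^p / T_{\mathcal{K},e}f$, from which equality of the dimensions (i.e.\ the $\mathcal{K}$-codimensions) follows immediately. Assume $[f]=[M]\cdot[g]\circ[\phi]$ with $[\phi]\in L_n$ and $[M]\in\mathrm{GL}_p(\mathcal{E}_n)$. Define
\begin{equation}
\Psi:\mathcal{E}_n^p\longrightarrow\mathcal{E}_n^p,\qquad \Psi([h])=[M]\cdot[h]\circ[\phi].
\end{equation}
Linearity is clear, and since both $[\phi]$ and $[M]$ are invertible, $\Psi$ is a linear isomorphism with inverse $[k]\mapsto[M]^{-1}\cdot[k]\circ[\phi]^{-1}$. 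In particular $\Psi([g])=[f]$.

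The crux is to show $\Psi(T_{\mathcal{K},e}g)=T_{\mathcal{K},e}f$. Pick a representative $Dg\cdot X+N\cdot g$ of an element of $T_{\mathcal{K},e}g$ with $X(0)=0$ and $N$ a $p\times p$ matrix germ, and compute
\begin{equation}
\Psi(Dg\cdot X+N\cdot g)(x)=M(x)\,Dg(\phi(x))\,X(\phi(x))+M(x)\,N(\phi(x))\,g(\phi(x)).
\end{equation}
Differentiating $f(x)=M(x)\,g(\phi(x))$ gives $Df(x)\cdot Y=(DM(x)\cdot Y)\,g(\phi(x))+M(x)\,Dg(\phi(x))\,D\phi(x)\,Y$, so setting $Y(x):=D\phi(x)^{-1}X(\phi(x))$ (which is a valid element of $\mathbf{X}_n^0$ since $X(0)=0$ and $D\phi(0)$ is invertible) yields
\begin{equation}
M(x)\,Dg(\phi(x))\,X(\phi(x))=Df(x)\cdot Y-(DM(x)\cdot Y)\,M(x)^{-1}f(x).
\end{equation}
Using $g(\phi(x))=M(x)^{-1}f(x)$ in the remaining term, one obtains
\begin{equation}
\Psi(Dg\cdot X+N\cdot g)=Df\cdot Y+\bigl(M\,(N\circ\phi)\,M^{-1}-(DM\cdot Y)\,M^{-1}\bigr)\cdot f,
\end{equation}
which is manifestly in $T_{\mathcal{K},e}f$. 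Hence $\Psi(T_{\mathcal{K},e}g)\subseteq T_{\mathcal{K},e}f$, and the same argument applied to $\Psi^{-1}$ (using that $\mathcal{K}$-equivalence is symmetric, with the inverse data $[\phi]^{-1}$ and $[M]^{-1}\circ[\phi]^{-1}$) gives the reverse inclusion.

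Therefore $\Psi$ restricts to a linear isomorphism $T_{\mathcal{K},e}g\to T_{\mathcal{K},e}f$, and passing to quotients produces the claimed isomorphism $\mathcal{E}_n^p/T_{\mathcal{K},e}g\cong\mathcal{E}_n^p/T_{\mathcal{K},e}f$, so $\mathrm{codim}_{\mathcal{K}}([g])=\mathrm{codim}_{\mathcal{K}}([f])$. The main obstacle is purely bookkeeping: tracking the chain rule on matrix-valued germs and ensuring that the modifying vector field $Y$ still vanishes at the origin. No deeper tools are required — the proof reduces to one chain-rule identity and the invertibility of $M(0)$ and $D\phi(0)$.
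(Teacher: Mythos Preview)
Your argument is correct and is the standard one: the map $\Psi([h])=[M]\cdot[h]\circ[\phi]$ is an $\R$-linear automorphism of $\mathcal{E}_n^p$, and the chain-rule computation you wrote out shows it carries $T_{\mathcal{K},e}g$ onto $T_{\mathcal{K},e}f$, so the quotients have the same dimension. The bookkeeping is fine; in particular $Y(0)=D\phi(0)^{-1}X(\phi(0))=0$ since $\phi(0)=0$ and $X(0)=0$, and your identification of the inverse data $(\phi^{-1},\,M^{-1}\circ\phi^{-1})$ is correct.

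As for comparison with the paper: there is nothing to compare. The paper does not prove this proposition; it explicitly flags it as a known result and refers the reader to \cite{Montaldi_2021} (see the remark preceding the definition of the extended $\mathcal{K}$-tangent space, where the authors state that, apart from \cref{propositionpushforwardinduction}, all proofs in that subsection are omitted). Your write-up is exactly the kind of direct verification one finds in standard references.
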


\begin{theorem} \label{theoremcontactversality}
	Let $[f] \in \mathcal{Z}_n^p$ be such that $\textnormal{codim}_\mathcal{K}([f])=d$. The following hold:
	\begin{enumerate}
		\item An unfolding $[\tilde{F}] \in \mathcal{Z}_{n,k}^p$ of $[f]$ with a representative of the form $$\tilde{F}(x,\eta_1,\ldots,\eta_k) = \left(F(x,\eta_1,\ldots,\eta_k),\eta_1,\ldots,\eta_k\right)$$  is $\mathcal{K}$-versal if, and only if,
		\begin{align*}
			T_{\mathcal{\mathcal{K}},e} f + \textnormal{span}_\mathbb{R} \left(\left[\frac{\partial F}{\partial \eta_1}\Big|_{\eta=0}\right],\ldots,\left[\frac{\partial F}{\partial \eta_k}\Big|_{\eta=0}\right]\right) = \mathcal{E}_n^p.
		\end{align*}
		\item There is $[\tilde{H}] \in \mathcal{Z}_{n,d}^p$ that is a $\mathcal{K}$-versal unfolding of $[f]$.
		\item If $[\tilde{F}],[\tilde{G}] \in \mathcal{Z}_{n,k}^p$ are $\mathcal{K}$-versal unfoldings of $[f]$, then they are $\mathcal{K}$-equivalent.
	\end{enumerate}
\end{theorem}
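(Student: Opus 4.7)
The plan is to prove the three statements of \cref{theoremcontactversality} in order, relying on the standard machinery of singularity theory and in particular on the Malgrange preparation theorem for the main technical step. This is a well-known result whose proof can be found in \cite{Montaldi_2021}; what follows is a sketch of the approach I would take.

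For part (1), the implication from $\mathcal{K}$-versality to the infinitesimal spanning condition is the easier direction. Given that $[\tilde{F}]$ is $\mathcal{K}$-versal, for any test germ $[\phi] \in \mathcal{E}_n^p$ I would introduce the one-parameter extension $\tilde{G}(x,\eta,t) = (F(x,\eta) + t\phi(x),\eta,t)$. By $\mathcal{K}$-versality, $[\tilde{G}]$ is $\mathcal{K}$-induced by $[\tilde{F}]$ via some $h(\eta,t)$, with associated $\alpha$ and $Q$ satisfying \cref{eq:definitionkinducedunfolding}. Differentiating the resulting identity with respect to $t$ at $(\eta,t)=(0,0)$ expresses $[\phi]$ as the sum of an element of $T_{\mathcal{K},e}f$ (produced by the $t$-derivatives of $Q$ and $\alpha$) and a linear combination of the $[\partial F/\partial \eta_i|_{\eta=0}]$ (produced by the $t$-derivative of $h$); since $[\phi]$ was arbitrary, the spanning condition follows. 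The converse is the genuinely hard direction: one must construct $\alpha, Q, h$ for an arbitrary unfolding starting only from the infinitesimal span. The standard route reduces this to a functional equation over the local ring of germs and solves it by applying the Malgrange preparation theorem to pass from the infinitesimal to the formal and then to the smooth level. This is the main obstacle and is where the technical content of the theorem lies.

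Part (2) is then obtained by a direct construction. Since $\mathcal{E}_n^p / T_{\mathcal{K},e}f$ has dimension $d$, I would pick representatives $e_1,\ldots,e_d \in \mathcal{E}_n^p$ whose classes form a basis of this quotient and define
\begin{equation}
\tilde{H}(x,\eta_1,\ldots,\eta_d) = \Bigl(f(x) + \sum_{i=1}^d \eta_i\, e_i(x),\,\eta_1,\ldots,\eta_d\Bigr).
\end{equation}
Then $\partial H/\partial \eta_i|_{\eta=0} = e_i$, so the infinitesimal criterion of part (1) is satisfied with equality $T_{\mathcal{K},e}f + \mathrm{span}([e_1],\ldots,[e_d]) = \mathcal{E}_n^p$, whence $[\tilde{H}]$ is $\mathcal{K}$-versal.

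For part (3), let $[\tilde{F}],[\tilde{G}] \in \mathcal{Z}_{n,k}^p$ be two $\mathcal{K}$-versal unfoldings of $[f]$. The $\mathcal{K}$-versality of $[\tilde{G}]$ ensures that $[\tilde{F}]$ is $\mathcal{K}$-induced by $[\tilde{G}]$ via some germ $[h]:(\R^k,0)\to(\R^k,0)$; the remaining task is to prove $[h] \in L_k$. Differentiating the induction identity in each $\eta_i$ at the origin and passing to the quotient $\mathcal{E}_n^p / T_{\mathcal{K},e}f$ expresses $[\partial F/\partial \eta_i|_0]$ as a linear combination of $[\partial G/\partial \xi_j|_0]$ with coefficients $\partial h_j/\partial \eta_i(0)$. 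By part (1), both families project to bases of this $d$-dimensional quotient (so in particular $k\ge d$, and equality of the number of parameters together with the spanning condition forces $k=d$ and both families to be bases). The change-of-basis matrix is then $dh(0)$, which is therefore invertible, so $[h] \in L_k$ and $[\tilde{F}],[\tilde{G}]$ are $\mathcal{K}$-equivalent in the sense of \cref{definitioncontactequivalent}.
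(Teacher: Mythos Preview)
The paper does not actually prove this theorem: it is stated as a standard result of singularity theory, with the explicit remark that ``all other proofs are well-known and can be found, for instance, in \cite{Montaldi_2021}.'' So there is no paper proof to compare against; your sketch is essentially the standard route one finds in that reference, and parts (1) and (2) are outlined correctly.

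There is, however, a genuine gap in your argument for part (3). You claim that ``equality of the number of parameters together with the spanning condition forces $k=d$ and both families to be bases.'' This is false: the statement concerns $\mathcal{K}$-versal unfoldings with $k$ parameters, and nothing prevents $k>d$ (that is precisely the distinction between versal and universal). When $k>d$, the families $\bigl\{[\partial F/\partial \eta_i|_0]\bigr\}$ and $\bigl\{[\partial G/\partial \xi_j|_0]\bigr\}$ span the $d$-dimensional quotient $\mathcal{E}_n^p/T_{\mathcal{K},e}f$ but are linearly dependent there, so you cannot conclude that $dh(0)$ is a change-of-basis matrix, and in fact $dh(0)$ need not be invertible for the $h$ you first obtain from versality. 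The standard fix is to pass through a $d$-parameter universal unfolding $[\tilde{H}]$: one shows that any $k$-parameter versal unfolding is $\mathcal{K}$-equivalent to the trivial $(k-d)$-parameter extension of $[\tilde{H}]$ (using that the inducing map $\R^k\to\R^d$ has surjective differential, hence can be completed to a local diffeomorphism of $\R^k$), and then both $[\tilde{F}]$ and $[\tilde{G}]$ are equivalent to this common model.
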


The concept of codimension is extremely important in the context of this paper because of its role defining the concept of $\mathcal{K}$-universality. 
\begin{definition}
	Let $[f] \in \mathcal{Z}_n^p$ be such that $\textnormal{codim}_\mathcal{K}([f])=d$. A $\mathcal{K}$-versal unfolding $[\tilde{F}] \in \mathcal{Z}_{n,k}^p$ of $[f]$ is said to be $\bm{\mathcal{K}}$-\textbf{universal} if $k=d$, that is, the number of parameters of the unfolding $[\tilde{F}]$ is equal to the codimension of $[f]$. 
\end{definition}

An important property of the codimension of a germ is established by \cref{propositioncodim}. \Cref{propositionuniversaliff}, on the other hand, characterizes $\mathcal{K}$-universal unfoldings.
\begin{proposition} \label{propositioncodim}
	Let $[f] \in \mathcal{Z}_n^p$ be such that $\textnormal{codim}_\mathcal{K}([f])=d$. Then, $d$ is the minimal number of parameters that an unfolding of $[f]$ must have to be $\mathcal{K}$-versal. 
\end{proposition}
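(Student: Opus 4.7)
The plan is to deduce the result directly from \cref{theoremcontactversality} together with the definition of $\mathcal{K}$-codimension as the dimension of the quotient $\mathcal{E}_n^p / T_{\mathcal{K},e}f$. The proof splits into two parts: showing that $d$ parameters are sufficient, and showing that fewer than $d$ are not. The sufficiency is immediate from part~(2) of \cref{theoremcontactversality}, which guarantees the existence of a $\mathcal{K}$-versal unfolding in $\mathcal{Z}_{n,d}^p$.

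For the necessity, I would argue by contradiction. Suppose there exists a $\mathcal{K}$-versal unfolding $[\tilde{F}] \in \mathcal{Z}_{n,k}^p$ of $[f]$ with $k<d$, where a representative has the form $\tilde{F}(x,\eta_1,\ldots,\eta_k) = (F(x,\eta_1,\ldots,\eta_k),\eta_1,\ldots,\eta_k)$. Applying the characterisation in part~(1) of \cref{theoremcontactversality}, $\mathcal{K}$-versality forces
\begin{equation}
T_{\mathcal{K},e}f + \textnormal{span}_{\mathbb{R}}\left(\left[\frac{\partial F}{\partial \eta_1}\Big|_{\eta=0}\right],\ldots,\left[\frac{\partial F}{\partial \eta_k}\Big|_{\eta=0}\right]\right) = \mathcal{E}_n^p.
\end{equation}
Passing to the quotient $\mathcal{E}_n^p / T_{\mathcal{K},e}f$, the images of the $k$ germs $[\partial F/\partial \eta_i|_{\eta=0}]$ must span the entire quotient. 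However, by definition $\dim\bigl(\mathcal{E}_n^p / T_{\mathcal{K},e}f\bigr) = \textnormal{codim}_\mathcal{K}([f]) = d$, and a set of $k<d$ vectors cannot span a $d$-dimensional $\mathbb{R}$-vector space. This contradiction shows that no $\mathcal{K}$-versal unfolding can have fewer than $d$ parameters.

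Combining the two parts, $d$ is attained and cannot be lowered, which is exactly the claim. I do not anticipate any serious obstacle: the entire argument is linear-algebraic once \cref{theoremcontactversality} is in hand, and the only subtle point is ensuring that the quotient $\mathcal{E}_n^p / T_{\mathcal{K},e}f$ is taken as an $\mathbb{R}$-vector space (which is how $\textnormal{codim}_\mathcal{K}$ was defined above), so that the standard dimension-counting argument applies.
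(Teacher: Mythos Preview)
Your argument is correct and is the standard one: sufficiency from part~(2) of \cref{theoremcontactversality}, necessity by passing to the quotient $\mathcal{E}_n^p/T_{\mathcal{K},e}f$ and counting dimensions. Note, however, that the paper does not actually supply its own proof of this proposition---it is one of the results explicitly declared well-known and deferred to \cite{Montaldi_2021} (see the remark preceding the definition of the extended $\mathcal{K}$-tangent space)---so there is no in-paper argument to compare against. Your proof is exactly what one would expect the omitted argument to be.
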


\begin{proposition}\label{propositionuniversaliff}
	An unfolding $[\tilde{F}] \in \mathcal{Z}_{n+k}^p$ of $[f] \in \mathcal{Z}_{n^p}$ is $\mathcal{K}$-universal if, and only if,
	\begin{enumerate}
		\item $ \left\{\left[\frac{\partial F}{\partial \eta_1}\Big|_{\eta=0}\right],\ldots,\left[\frac{\partial F}{\partial \eta_k}\Big|_{\eta=0}\right]\right\} \subset \mathcal{E}_n^p $ is linearly independent;
		\item $	T_{\mathcal{\mathcal{K}},e} f \oplus \textnormal{span}_\mathbb{R} \left(\left[\frac{\partial F}{\partial \eta_1}\Big|_{\eta=0}\right],\ldots,\left[\frac{\partial F}{\partial \eta_k}\Big|_{\eta=0}\right]\right) = \mathcal{E}_n^p$.
	\end{enumerate}
\end{proposition}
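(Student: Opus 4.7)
The plan is to combine the characterization of $\mathcal{K}$-versality provided by \cref{theoremcontactversality}(1) with the definition of $\mathcal{K}$-universal (a $\mathcal{K}$-versal unfolding having exactly $d := \textnormal{codim}_\mathcal{K}([f])$ parameters), and to carry out the entire argument inside the finite-dimensional quotient $\mathcal{E}_n^p / T_{\mathcal{K},e}f$, whose dimension is precisely $d$. Let $\pi:\mathcal{E}_n^p \to \mathcal{E}_n^p / T_{\mathcal{K},e}f$ be the canonical projection and write $v_i := [\partial F/\partial \eta_i|_{\eta=0}]$ for $i=1,\ldots,k$.

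For the "only if" direction, assume $[\tilde F]$ is $\mathcal{K}$-universal, so $k=d$ and $[\tilde F]$ is $\mathcal{K}$-versal. By \cref{theoremcontactversality}(1), $T_{\mathcal{K},e}f + \textnormal{span}_\R(v_1,\ldots,v_k) = \mathcal{E}_n^p$, so the images $\pi(v_1),\ldots,\pi(v_k)$ span the $d$-dimensional quotient. Since we have exactly $k=d$ spanning vectors in a $d$-dimensional space, they form a basis of $\mathcal{E}_n^p/T_{\mathcal{K},e}f$; in particular they are linearly independent there. Linear independence modulo $T_{\mathcal{K},e}f$ immediately implies linear independence in $\mathcal{E}_n^p$ (condition 1), and also forces $T_{\mathcal{K},e}f \cap \textnormal{span}_\R(v_1,\ldots,v_k)=\{0\}$ (since any element of the intersection would project to $0$ yet be a combination of a linearly independent set in the quotient). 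Combined with the sum being all of $\mathcal{E}_n^p$, this upgrades the sum to a direct sum, giving condition 2.

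For the "if" direction, assume conditions 1 and 2. The direct sum in condition 2 implies $\textnormal{span}_\R(v_1,\ldots,v_k) \cong \mathcal{E}_n^p / T_{\mathcal{K},e}f$, so $\dim \textnormal{span}_\R(v_1,\ldots,v_k) = d$. Condition 1 gives $\dim \textnormal{span}_\R(v_1,\ldots,v_k) = k$. Therefore $k=d$. Moreover, the sum part of condition 2 is exactly the criterion of \cref{theoremcontactversality}(1), so $[\tilde F]$ is $\mathcal{K}$-versal; since it also has $k=d$ parameters, it is $\mathcal{K}$-universal by definition.

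I do not foresee a significant obstacle: the argument is, at its heart, a dimension count in the finite-dimensional quotient $\mathcal{E}_n^p / T_{\mathcal{K},e}f$. The only point requiring mild care is the interpretation of the symbol $\oplus$ in the infinite-dimensional ambient space $\mathcal{E}_n^p$, which we take in the standard sense of a sum with trivial intersection; this is what allows the dimension of $\textnormal{span}_\R(v_1,\ldots,v_k)$ to be read off from the quotient on one side of the equivalence, and the equality $k=d$ to be extracted on the other.
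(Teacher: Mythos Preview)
Your argument is correct. The paper does not actually supply its own proof of this proposition: in the paragraph preceding \cref{theoremcontactversality} it states that, with the exception of \cref{propositionpushforwardinduction}, all results in that subsection are standard and refers the reader to \cite{Montaldi_2021}. Your derivation --- reducing both directions to a dimension count in the $d$-dimensional quotient $\mathcal{E}_n^p/T_{\mathcal{K},e}f$ using \cref{theoremcontactversality}(1) and the definition of $\mathcal{K}$-universality --- is exactly the natural way to extract the proposition from the surrounding material, and each step (in particular, that linear independence of the $\pi(v_i)$ in the quotient implies both linear independence of the $v_i$ and trivial intersection with $T_{\mathcal{K},e}f$) is valid.
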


We now define the pushforward of an unfolding, a concept that will be important for establishing the idea of equivalent families of two distinct but equivalent germs.
\begin{definition}
	Let $[\tilde{F}] \in \mathcal{Z}_{n,k}^p$ be an unfolding of the germ $[f] \in \mathcal{Z}_n^p$. For any given pair $([M],[\phi]) \in GL_p(\mathcal{E}_n)\times L_n$, the \textit{pushforward} of $[\tilde{F}]$ by $([M],[\phi])$, denoted by $([M],[\phi]) * [\tilde{F}] $ is defined as the unfolding of the germ $[f_{\text{push}}] = [M] \cdot [f] \circ [\phi]$ whose representative $\tilde{F}_{\text{push}}(x,\eta)=(F_{\text{push}}(x,\eta),\eta)$ satisfies
	\begin{equation}
		F_{\text{push}}(x,\eta) = M(x) \cdot F(\phi(x),\eta),
	\end{equation}
	in a neighbourhood of the origin.
\end{definition}

The pushforward has the important property of preserving induction, as proved in the next proposition.
\begin{proposition} \label{propositionpushforwardinduction}
	Let $[f] \in \mathcal{Z}_n^p$ and $([M],[\phi]) \in GL_p(\mathcal{E}_n)\times L_n$. The pushforward $([M],[\phi])*$ is a bijective map between the set of unfoldings of $[f]$ and unfoldings of $[f_{\text{push}}] = [M] \cdot [f] \circ [\phi]$ that preserves $\mathcal{K}$-induction.
\end{proposition}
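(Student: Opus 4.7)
The plan is to verify three things in sequence: well-definedness, bijectivity, and preservation of $\mathcal{K}$-induction, all by direct manipulation of representatives.

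First I would check that the pushforward of an unfolding of $[f]$ is indeed an unfolding of $[f_{\text{push}}]$. If $\tilde{F}(x,\eta)=(F(x,\eta),\eta)$ represents an unfolding of $[f]$, then the candidate $F_{\text{push}}(x,\eta) = M(x) \cdot F(\phi(x),\eta)$ satisfies $F_{\text{push}}(x,0) = M(x)\cdot f(\phi(x)) = f_{\text{push}}(x)$, as required. To produce an inverse I would compute the composition law: applying $([M'],[\phi'])*$ after $([M],[\phi])*$ gives a pushforward by the pair $(\tilde{M},\phi\circ\phi')$ with $\tilde{M}(x)=M'(x)\cdot M(\phi'(x))$. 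Setting $\phi' = \phi^{-1}$ and $M'(x) = M(\phi^{-1}(x))^{-1}$ (note $[\phi]\in L_n$ so $[\phi^{-1}]$ makes sense in the germ setting, and $[M]\in GL_p(\mathcal{E}_n)$ guarantees pointwise invertibility near the origin) shows that $([M],[\phi])*$ is invertible with a pushforward as its inverse, hence is a bijection between the two sets of unfoldings.

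For the preservation of $\mathcal{K}$-induction, suppose $[\tilde{F}]$ is $\mathcal{K}$-induced from $[\tilde{G}]$ via $[h]$, so that there exist representatives and functions $\alpha,Q$ satisfying \cref{eq:definitionkinducedunfolding}, with $\alpha(x,0)=x$ and $Q(x,0)=I_p$. Writing out the pushforwards, $F_{\text{push}}(x,\eta)=M(x)\cdot Q(\phi(x),\eta)\cdot G(\alpha(\phi(x),\eta),h(\eta))$, I would guess the ansatz
\begin{equation}
\alpha_{\text{push}}(x,\eta) = \phi^{-1}\bigl(\alpha(\phi(x),\eta)\bigr), \qquad Q_{\text{push}}(x,\eta) = M(x)\cdot Q(\phi(x),\eta)\cdot M(\alpha_{\text{push}}(x,\eta))^{-1},
\end{equation}
and then verify by substitution that $F_{\text{push}}(x,\eta) = Q_{\text{push}}(x,\eta)\cdot G_{\text{push}}(\alpha_{\text{push}}(x,\eta),h(\eta))$. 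The boundary conditions hold because $\alpha_{\text{push}}(x,0)=\phi^{-1}(\phi(x))=x$ and $Q_{\text{push}}(x,0)=M(x)\cdot I_p\cdot M(x)^{-1}=I_p$. Since the same $[h]$ is used, the $\mathcal{K}$-induction relation is preserved, and combined with the bijectivity from the previous step (applied also to the reverse direction using the inverse pushforward), the proposition follows.

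The main obstacle is essentially bookkeeping: carefully matching the boundary conditions $\alpha(x,0)=x$ and $Q(x,0)=I_p$ under the pushforward, which forces the specific conjugation by $M(\alpha_{\text{push}})^{-1}$ in the definition of $Q_{\text{push}}$. Once this conjugation is isolated, everything else is formal manipulation of germ representatives, and the fact that $[M]\in GL_p(\mathcal{E}_n)$ ensures all inverses used are well-defined as germs.
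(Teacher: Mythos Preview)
Your proof is correct and follows essentially the same approach as the paper: both show bijectivity by exhibiting an inverse pushforward, and both verify preservation of $\mathcal{K}$-induction by writing out $F_{\text{push}}$ explicitly and defining the new data via $\alpha_{\text{push}}(x,\eta)=\phi^{-1}(\alpha(\phi(x),\eta))$ together with a conjugated matrix factor. Your treatment is in fact slightly more careful than the paper's --- you compute the composition law to find the correct inverse pair $(M(\phi^{-1}(\cdot))^{-1},\phi^{-1})$, and your $Q_{\text{push}}$ carries the factor $M(\alpha_{\text{push}}(x,\eta))^{-1}$ rather than $M(x)^{-1}$, which is what is actually needed for the identity to hold away from $\eta=0$.
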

\begin{proof}
	It is bijective because it has an inverse given by $([M^{-1}],[\phi^{-1}])$. 
	
	Suppose that $[\tilde{G}] \in \mathcal{E}_{n,l}^p$ is $\mathcal{K}$-induced by $[\tilde{F}]$ via $[h]$. Then, there are neighbourhoods of the origin $U \subset \R^n$, $V \subset \R^k$, and smooth functions $\alpha:U \times V \to \R^n$ and $Q: U \times V \to \R^{p \times p}$ such that $h(0)=0$, $\alpha(x,0)=x$, $Q(x,0)=I_p$, and
	\begin{equation} 
		G(x,\eta) = Q(x,\eta) \cdot F(\alpha(x,\eta),h(\eta))
	\end{equation}
	for $(x,\eta) \in U \times V$.	
	
	Define $[\tilde{F}_\text{push}] = ([M],[\phi]) * [\tilde{F}] $ and $[\tilde{G}_\text{push}] = ([M],[\phi]) * [\tilde{G}] $. By definition, if $\tilde{F}_{\text{push}}(x,\eta)=(F_{\text{push}}(x,\eta),\eta)$ and $\tilde{G}_{\text{push}}(x,\eta)=(G_{\text{push}}(x,\eta),\eta)$ are representatives, then 
	\begin{equation}
		G_{\text{push}} (x,\eta) = M(x)  \cdot Q(\phi(x),\eta) \cdot F(\alpha(\phi(x),\eta),h(\eta)).
	\end{equation}
	Setting $\beta(x,\eta) = \phi^{-1}(\alpha(\phi(x),\eta))$ and $S(x,\eta) = M(x) Q(\phi(x),\eta) M^{-1}(x)$, it follows that
	\begin{equation}
		G_{\text{push}} (x,\eta) =S(x,\eta) \cdot F_\text{push}(\beta(x,\eta),h(\eta)),
	\end{equation}
	which proves that $[\tilde{G}_\text{push}]$ is $\mathcal{K}$-induced by $[\tilde{F}_\text{push}]$ via $[h]$.
\end{proof}
\begin{corollary}
	The pushforward of $\mathcal{K}$-equivalent unfoldings are $\mathcal{K}$-equivalent. Also, the pushforward of a $\mathcal{K}$-versal unfolding is $\mathcal{K}$-versal. 
\end{corollary}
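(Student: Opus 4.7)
The plan is to derive both assertions as immediate corollaries of \cref{propositionpushforwardinduction}, so the work is organisational rather than technical. The key observation is that $\mathcal{K}$-equivalence of unfoldings of a fixed germ is simply $\mathcal{K}$-induction via some $[h]\in L_k$, and $\mathcal{K}$-versality is $\mathcal{K}$-induction of \emph{every} unfolding of the germ; hence a statement that pushforward preserves $\mathcal{K}$-induction must imply preservation of both properties. I would treat the two claims in turn.

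For the first claim, I would take $[\tilde{F}],[\tilde{G}]\in\mathcal{Z}_{n,k}^p$ that are $\mathcal{K}$-equivalent unfoldings of $[f]\in\mathcal{Z}_n^p$. By \cref{definitioncontactequivalent} there exists $[h]\in L_k$ such that $[\tilde{F}]$ is $\mathcal{K}$-induced by $[\tilde{G}]$ via $[h]$. Applying \cref{propositionpushforwardinduction} with this same $[h]$, the pushed-forward unfolding $([M],[\phi])\ast[\tilde{F}]$ is $\mathcal{K}$-induced by $([M],[\phi])\ast[\tilde{G}]$ via $[h]$. Since $[h]\in L_k$ is the same diffeomorphism germ, this is exactly the statement that the two pushforwards are $\mathcal{K}$-equivalent unfoldings of $[f_{\text{push}}]=[M]\cdot[f]\circ[\phi]$.

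For the second claim, I would let $[\tilde{F}]\in\mathcal{Z}_{n,k}^p$ be a $\mathcal{K}$-versal unfolding of $[f]$ and let $[\tilde{H}]\in\mathcal{Z}_{n,l}^p$ be an arbitrary unfolding of $[f_{\text{push}}]$. The goal is to show $[\tilde{H}]$ is $\mathcal{K}$-induced by $([M],[\phi])\ast[\tilde{F}]$ via some germ of a mapping. Here I use bijectivity of pushforward (the first sentence of \cref{propositionpushforwardinduction}): set $[\tilde{G}]:=([M^{-1}],[\phi^{-1}])\ast[\tilde{H}]$, which is an unfolding of $[f]$. Versality of $[\tilde{F}]$ gives $[h]:(\R^l,0)\to(\R^k,0)$ such that $[\tilde{G}]$ is $\mathcal{K}$-induced by $[\tilde{F}]$ via $[h]$. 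Applying \cref{propositionpushforwardinduction} once more, $([M],[\phi])\ast[\tilde{G}]=[\tilde{H}]$ is $\mathcal{K}$-induced by $([M],[\phi])\ast[\tilde{F}]$ via the same $[h]$, as desired.

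There is essentially no obstacle: the whole content has been packaged into \cref{propositionpushforwardinduction}, and the corollary is a formal consequence of the induction-preservation statement together with the bijectivity of the pushforward operation. The only point that warrants a brief mention in writing is that the germ $[h]$ witnessing induction is unchanged under pushforward, which is what makes the transport of versality (and in particular the preservation of the parameter dimension $k$, relevant if one later invokes $\mathcal{K}$-universality) completely automatic.
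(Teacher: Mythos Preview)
Your proposal is correct and matches the paper's intent: the corollary is stated immediately after \cref{propositionpushforwardinduction} with no proof, so the paper regards it as a direct consequence of that proposition, exactly as you have written it out. Your organisation—reading $\mathcal{K}$-equivalence as induction via an $[h]\in L_k$ and $\mathcal{K}$-versality as induction of every unfolding, then invoking preservation of induction and bijectivity—is precisely the unpacking the paper leaves implicit.
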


\subsection{The averaging method: a brief presentation}\label{sec:averagingpresentation}

A widely used technique for analysing non-linear oscillatory systems under small perturbations is the averaging method. This method has been rigorously formalized in a series of works starting with Fatou \cite{Fa} and including those by Krylov and Bogoliubov \cite{BK}, Bogoliubov \cite{Bo}, and later by Bogoliubov and Mitropolsky \cite{BM}. However, its origins can be traced back to the early perturbative methods applied in the study of solar system dynamics by Clairaut, Laplace, and Lagrange. For a concise historical overview of averaging theory, see \cite[Chapter 6]{MN} and \cite[Appendix A]{Sanders2007}.

In this section, we briefly introduce this method and provide explicit formulas for calculating the terms appearing in the definition of $\Delta_\ell$, the displacement function of order $\ell$.

\subsubsection{Main concepts}

Averaging theory is usually built around systems of the form \cref{eq:standardsystemintro}, which are said to be in the \textit{standard form}.  A key result of this theory concerns the possibility of moving time-dependency to higher orders of the parameter, and can be stated as follows (see \cite[Lemma 2.9.1]{Sanders2007} for a proof).

\begin{lemma} \label{lemmaaveragingtransform}
	There is a smooth near-identity map
	\begin{equation}\label{nit}
		X=U(t,z ,\mu,\e)= z+\sum_{i=1}^k \e^i\,  u_i(t,z,\mu ),
	\end{equation}
	$T$-periodic in $t$, that transforms differential system \eqref{eq:standardsystemintro} into
	\begin{equation}\label{fullaveq}
		z'=\sum_{i=1}^k\e^i g_i(z,\mu)+\e^{k+1} r_k(t,z,\mu,\e),
	\end{equation}
	with each of the functions on the right-hand side being smooth.
\end{lemma}

By imposing the existence of such a transformation and, then, solving homological equations, the functions $u_i$ and $g_i$, for $i\in\{1,2,\ldots,k\}$, can be recursively obtained. In general, 
\[
g_1(z,\mu)=\dfrac{1}{T}\int_0^T F_1(t,z,\mu)dt
\]
but, for $i\in\{2,\ldots,k\}$, such functions are not unique. Nevertheless, by imposing additionally the {\it stroboscopic condition} 
\[
U(z,0,\mu,\e)=z,
\]
each $g_i$ becomes uniquely determined, which is referred to as the {\it stroboscopic averaged function of order $i$} (or simply {\it $i$th-order averaged function})  of \eqref{eq:standardsystemintro}. If the original system is only of class $C^d$ for some $d \in \mathbb{N}^*$, there is generally loss of one derivative for each newly calculated order of the averaged functions.

The primary objective of the averaging method consists in estimating the solutions of the non-autonomous original differential equation \eqref{eq:standardsystemintro} by means of the following autonomous differential equation 
\begin{equation}\label{taq}
	z'=\sum_{i=1}^k\e^i  g_i(z,\mu),
\end{equation}
which corresponds to the truncation up to order $k$ in $\e$ of the differential equation \eqref{fullaveq}. Accordingly, for sufficiently small $|\e|\neq0$, the solutions of \eqref{eq:standardsystemintro} and  \eqref{taq}, with identical initial condition, remain $\e^k$-close over an interval of time of size $\mathcal{O}(1/\e)$ (see \cite[Theorem 2.9.2]{Sanders2007}).

The averaging method has shown to be highly effective in detecting the emergence of invariant structures of \eqref{eq:standardsystemintro} that originate from hyperbolic invariant structures of the autonomous differential equation
\begin{equation}\label{eq:gs}
	z'=g_{\ell}(z),
\end{equation}
where $g_{\ell}$ is the first averaged function that is not identically zero. As we have mentioned in \Cref{sec:results}, \eqref{eq:gs} is known as {\it guiding system}. A classical result of averaging theory in this context asserts the birth of an isolated periodic solution of \eqref{eq:standardsystemintro} provided that the guiding system \eqref{eq:gs} has a simple equilibrium (see, for instance, \cite{Hale,V}). This result has been extended to settings with less regularity \cite{BBLN19,BUICA20047,LMN15,LNR,LNT15,N22,NS21}. More recently, in \cite{NP24,PEREIRA20231}, this result has been generalized to detect higher-dimensional structures. Specifically, it has been proven that the differential equation \eqref{eq:standardsystemintro}  possesses an invariant torus, provided there is a hyperbolic limit cycle in the the guiding system \eqref{eq:gs} (see also \cite{CANDIDO20204555}).

\subsubsection{Calculation of the averaged functions}

As recently highlighted in  \cite{Novaes21b}, the averaging method is strictly related with the Melnikov method, which consists in expanding the solutions $X(t,X_0,\mu,\e)$ of \eqref{eq:standardsystemintro}, satisfying $X(0,X_0,\mu,\e)=X_0$, around $\e=0$ as (see \cite{LliNovTei2014,N17})
\begin{equation}\label{expansion}
	X(t,X_0,\mu,\e)=X_0+\sum_{i=1}^{k}\e^{i}\dfrac{y_i(t,X_0,\mu)}{i!}+\e^{k+1}r_{k}(t,X_0,\mu,\e), 
\end{equation}
where
\begin{equation}\label{yi}
	\begin{aligned}
		&y_1(t,X_0,\mu)= \int_0^tF_1(s,X_0,\mu)\,ds\\
		&y_i(t,X_0,\mu)= \int_0^t\bigg(i!F_i(s,X_0,\mu) + K_i (s, X_0, \mu) \bigg) ds, \,\, \text{ and }\vspace{0.3cm}\\
		&K_i(s,X_0,\mu) = \sum_{j=1}^{i-1}\sum_{m=1}^j\dfrac{i!}{j!}\p_x^m F_{i-j} (s,X_0,\mu)B_{j,m}\big(y_1,\ldots,y_{j-m+1}\big)(s,X_0,\mu),
	\end{aligned}
\end{equation}
for $i\in\{2,\ldots,k\}.$ Here, $B_{p,q}$ refers to the  {\it partial Bell polynomials} \cite{comtet}.  This formula can be easily implemented in algebraic softwares such as Mathematica and Maple.

From \eqref{expansion}, the stroboscopic Poincar\'{e} map becomes
\[
\Pi(X_0,\mu,\e)=X(T,X_0,\mu,\e)=X_0+\sum_{i=1}^k \e^i f_i(X_0,\mu)+\e^{k+1}R_k(X_0,\mu,\e)
\]
where $R_k(X_0,\mu,\e)=r_{k}(T,X_0,\mu,\e)$ and, for each $i$, 
\begin{equation}\label{avfunc}
	f_i(X_0,\mu)=\dfrac{y_i(T,X_0,\mu)}{i!}
\end{equation}
Notice that $ f_1=T g_1$. The  function $ f_i$ is referred to as the {\it Poincar\'{e}-Pontryagin-Melnikov function of order $i$}  or simply {\it $i$th-order Melnikov function} (and sometimes also {\it averaged functions}, see for instance \cite{LliNovTei2014}).

A formula connecting averaged and Melnikov functions was established in \cite[Theorem A]{Novaes21b}, given by
\begin{equation}\label{rec:gi}
	\begin{aligned}
		g_1(z,\mu)=&\dfrac{1}{T} f_1(z,\mu),\\
		g_i(z,\mu)=&\dfrac{1}{T}\left(  f_i(z,\mu)-\Theta(z,\mu)\right),
	\end{aligned}
\end{equation}
with $\tilde y_i(t,z,\mu),$ for $i\in\{1,\ldots,k\},$ being recursively defined by
\begin{equation}\label{tildeyi}
	\begin{aligned}
		\tilde y_1(t,z,\mu)=&t\,  g_1(z,\mu)\vspace{0.3cm}\\
		\tilde y_i(t,z,\mu)=& i!t\, g_i(z,\mu) +\Theta(z,\mu),
	\end{aligned}
\end{equation}
and with
\begin{equation}\label{gisum}
	\Theta(z,\mu)=\sum_{j=1}^{i-1}\sum_{m=1}^j\dfrac{i!}{j!}d^m  g_{i-j}(z,\mu) \int_0^t B_{j,m}\big(\tilde y_1,\ldots,\tilde y_{j-m+1}\big)(s,z,\mu)ds.
\end{equation}
This formula facilitates the calculation of the averaged functions without the need to handle the near-identity transformation \eqref{nit} and solving homological equations. For a practical implementation of this formula, we refer to \cite[Appendix A]{Novaes21b}, where a Mathematica algorithm is provided for computing the averaging functions.

As described in \Cref{sec:averagingintro}, the guiding system \eqref{eq:gs} plays a crucial role in averaging theory and is defined by the first averaged function that is not identically zero. In the following proposition, easily computable formulae are provided for this function, as well as for some of the subsequent averaged functions, using Melnikov functions.

\begin{proposition}\cite[Proposition 1]{Novaes21b-add}
	Let $\ell\in\{2,\ldots,k\}$. If either $ f_1=\cdots= f_{\ell-1}=0$ or $ g_1=\cdots= g_{\ell-1}=0,$ then
	\[
	g_i=\dfrac{1}{T}f_i,\quad \text{for}\quad i\in\{1,\ldots,2\ell-1\},
	\]
	and
	\[
	g_{2\ell}(z,\mu)=\dfrac{1}{T}\left(  f_{2\ell}(z,\mu)- \dfrac{1}{2} d  f_{\ell}(z,\mu)\cdot  f_{\ell}(z,\mu)\right).
	\]
\end{proposition}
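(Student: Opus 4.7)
The plan is to proceed by strong induction on $i$, leveraging the recursive relations \cref{rec:gi}--\cref{gisum}. I would first observe that the two hypotheses in the statement are in fact equivalent, by a preliminary induction on $i \in \{1, \ldots, \ell-1\}$: if $g_1 = \cdots = g_{i-1} = 0$, then every term of the sum \cref{gisum} defining $\Theta$ at step $i$ contains a factor $d^m g_{i-j}$ with $i-j < i$, so $\Theta \equiv 0$ at step $i$, and \cref{rec:gi} gives $g_i = f_i/T$. Hence $f_i = 0$ iff $g_i = 0$ in this range, and simultaneously \cref{tildeyi} yields $\tilde y_i \equiv 0$ for every $i < \ell$. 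From here on I may freely assume $g_1 = \cdots = g_{\ell-1} = 0$ and $\tilde y_1 = \cdots = \tilde y_{\ell-1} = 0$.

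The crux is to show $\Theta \equiv 0$ for every $i \in \{\ell, \ldots, 2\ell-1\}$, which by \cref{rec:gi} gives the formula $g_i = f_i/T$ throughout $\{1, \ldots, 2\ell-1\}$. A typical summand of $\Theta$ has the form
\begin{equation*}
\frac{i!}{j!}\, d^m g_{i-j}(z,\mu) \int_0^t B_{j,m}\bigl(\tilde y_1, \ldots, \tilde y_{j-m+1}\bigr)(s,z,\mu)\, ds.
\end{equation*}
For this summand to be nonzero, two conditions must hold at once: first, $g_{i-j} \neq 0$ forces $i-j \geq \ell$, that is, $j \leq i - \ell$; second, since $\tilde y_p = 0$ for $p < \ell$, and since each monomial of $B_{j,m}(x_1,\ldots,x_{j-m+1})$ is indexed by a tuple $(a_p)$ with $\sum a_p = m$ and $\sum p\, a_p = j$, a surviving monomial requires $a_p = 0$ for all $p < \ell$, which forces $j \geq m\ell \geq \ell$. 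Combining these gives $\ell \leq j \leq i - \ell$, which is empty whenever $i \leq 2\ell - 1$. Hence $\Theta \equiv 0$ and $g_i = f_i/T$ for $i \in \{1,\ldots,2\ell-1\}$.

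For $i = 2\ell$ the same inequalities pin down the unique pair $(j,m) = (\ell,1)$. Using the standard identity $B_{\ell,1}(x_1,\ldots,x_\ell) = x_\ell$ together with $\tilde y_\ell(s,z,\mu) = \ell!\, s\, g_\ell(z,\mu)$ (which follows from the already-established vanishing of $\Theta$ at level $\ell$), one computes the lone surviving contribution to $\Theta$ as a scalar multiple of $dg_\ell \cdot g_\ell$. Substituting the previously obtained identity $f_\ell = T g_\ell$ then rewrites this as $\tfrac{1}{2}\, df_\ell \cdot f_\ell$, delivering the claimed formula $g_{2\ell} = T^{-1}\bigl(f_{2\ell} - \tfrac{1}{2}\, df_\ell \cdot f_\ell\bigr)$.

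The step I expect to require the most care is the bookkeeping of numerical factors in the $i = 2\ell$ computation: the weight $i!/j!$, the particular normalization of the partial Bell polynomials, the factorial appearing in $\tilde y_i = i!\, t\, g_i + \Theta$, and the convention under which $\Theta$ in \cref{rec:gi} is understood (evaluated at $t = T$) must align to produce exactly the coefficient $\tfrac{1}{2}$. The combinatorial vanishing argument itself, driven by the two inequalities $j \leq i - \ell$ and $j \geq m\ell$, is robust and does not depend on these conventions.
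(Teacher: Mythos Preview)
The paper does not supply its own proof of this proposition: it is quoted verbatim from \cite{Novaes21b-add} and no argument follows the statement. So there is nothing in the present paper to compare your attempt against.

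That said, your approach is the natural one and is essentially how the result is established in the cited reference. The two-sided squeeze $j\le i-\ell$ (from $g_{i-j}\neq0$) and $j\ge m\ell\ge\ell$ (from the support of the partial Bell polynomial once $\tilde y_1=\cdots=\tilde y_{\ell-1}=0$) is exactly the mechanism that kills $\Theta$ for $i\le 2\ell-1$ and isolates the single pair $(j,m)=(\ell,1)$ at $i=2\ell$. Your preliminary induction showing the equivalence of the two hypotheses and the vanishing of the $\tilde y_i$ for $i<\ell$ is also correct and necessary.

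You are right to flag the coefficient bookkeeping at $i=2\ell$ as the delicate point. As the formulas are transcribed in this paper, $\Theta$ in \cref{gisum} carries a $t$-dependence that must be evaluated at $t=T$ when substituted into \cref{rec:gi}, and there is an implicit normalisation (coming from the factor $i!$ in \cref{tildeyi} and the $1/i!$ in the definition \cref{avfunc} of $f_i$) that one must track to recover the clean factor $\tfrac12$. The combinatorial skeleton of your argument is robust; only this arithmetic requires care, and it is best checked directly against the conventions in \cite{Novaes21b} rather than the abbreviated display here.
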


%%%%%%%%%%%%%%%%%%%%%%%%%%%%%%%%%%%%%%%%%%%%%%%%%%%%%%%%%%%%
%%%%%%%%%%%%%%%%%%%%%%%%%%%%%%%%%%%%%%%%%%%%%%%%%%%%%%%%%%%%

\section{Proof of theorems} \label{sec:proofs}

This section contains the proofs for all the theorems stated in \Cref{sec:results}, as well as some novel auxiliary results that are used in those proofs.
\subsection{Auxiliary results: zero sets and $\mathcal{K}$-equivalence}
We first present and prove some useful (albeit slightly technical) results concerning the set of zeroes of different unfoldings of the same germ. The general idea of $\mathcal{K}$-equivalence preserving the zero sets of germs, and thus being useful in the study of bifurcations, is well-known (see \cite{Martinet76,Montaldi_2021}), we prove a precise formulation suited to our context here, to more easily consider strongly-fibred diffeomorphisms, maintaining the difference between the ``bifurcation" parameters $\mu$ and the perturbation parameter $\e$.
\begin{lemma}\label{theoremzeroesKequivalence}
	Let $[\tilde{F}], [\tilde{G}] \in\mathcal{Z}^p_{n,k}$ be unfoldings of, respectively, $[f], [g] \in \mathcal{Z}^p_n$. Assume that $[f]$ and $[g]$ are $\mathcal{K}$-equivalent and let $([M],[\phi]) \in GL_p(\mathcal{E}_n)\times L_n$ be such that $[g]=[M] \cdot [f] \circ[\phi]$. Also, let $\tilde{F}: \mathcal{D} \times \Sigma_k \to \R^p \times \R^k$, $\tilde{G}: \mathcal{D} \times \Sigma_k \to \R^p \times \R^k$ be representatives of $[\tilde{F}]$ and $[\tilde{G}]$ of the form $\tilde{F}(x,\eta)=(F(x,\eta),\eta)$ and $\tilde{G}(x,\eta)=(G(x,\eta),\eta)$. 
	
	If $[\tilde{G}]$ is $\mathcal{K}$-equivalent to $([M],[\phi])*[\tilde{F}]$ via $[h]$, there are $W \in  \mathcal{N}_0(\mathcal{D} \times \Sigma)$ and a diffeomorphism $\Phi:W \to E_\Phi$, satisfying $\Phi(x,\eta) =(\Phi_1(x,\eta),\Phi_2(\eta)) \in \R^n \times \R^k$, $\Phi(x,0)=(\phi(x),0)$, and
	\begin{equation} \label{eq:theoremzerosetequalityK}
		Z_F \cap E_\Phi= \Phi\left(Z_G \cap W\right).
	\end{equation}
	Additionally, if $F$ is independent of the last $k_F \in \{0,1,\ldots,k-1\}$ entries of $\eta = (\eta_1,\ldots,\eta_k)$ and $h=\left(h_1,\ldots,h_{k}\right)$ is such that
	\begin{equation} \label{eq:hypothesisdeterminantK}
		\det \left[ \frac{\partial \left(h_1,\ldots,h_{k-k_F}\right)}{\partial (\eta_1,\ldots,\eta_{k-k_F})}(0,0) \right]\neq0;
	\end{equation}
	then $\Phi_2$ can be chosen as $\Phi_2(\eta) =(h_1(\eta),\ldots,h_{k-k_F}(\eta),\eta_{k-k_F+1},\ldots,\eta_k)$. In particular, $\Phi_2=h$ can be chosen regardless of $F$.
\end{lemma}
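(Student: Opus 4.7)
The plan is to make the definition of $\mathcal{K}$-induction explicit and then read the diffeomorphism $\Phi$ directly off the resulting identity. By hypothesis and \cref{definitioncontactinduced}, the unfolding $[\tilde G]$ is $\mathcal{K}$-induced by $[\tilde F_{\text{push}}] := ([M],[\phi]) * [\tilde F]$ via $[h]$, so there exist neighbourhoods $U \subset \R^n$ and $V \subset \R^k$ of the origin and smooth maps $\alpha : U \times V \to \R^n$, $Q : U \times V \to \R^{p \times p}$ with $\alpha(x,0)=x$ and $Q(x,0)=I_p$ such that, on $U \times V$,
\begin{equation} \label{plan:identity}
G(x,\eta) = Q(x,\eta) \cdot F_{\text{push}}(\alpha(x,\eta), h(\eta)) = Q(x,\eta) \cdot M(\alpha(x,\eta)) \cdot F\bigl(\phi(\alpha(x,\eta)), h(\eta)\bigr),
\end{equation}
where the second equality is just the definition of the pushforward.

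I would then define
\begin{equation}
\Phi(x,\eta) := \bigl(\phi(\alpha(x,\eta)),\, h(\eta)\bigr),
\end{equation}
so that $\Phi$ is manifestly weakly fibred with $\Phi_1(x,\eta) = \phi(\alpha(x,\eta))$ and $\Phi_2(\eta) = h(\eta)$, and $\Phi(x,0) = (\phi(x), 0)$ because $\alpha(\cdot,0)=\mathrm{id}$ and $h(0)=0$. The Jacobian of $\Phi$ at the origin is block upper-triangular with diagonal blocks $D\phi(0) \cdot \partial_x \alpha(0,0) = D\phi(0)$ and $Dh(0)$; both are invertible since $[\phi] \in L_n$ and $[h] \in L_k$, so $\Phi$ restricts to a diffeomorphism between some $W \in \mathcal{N}_0(\mathcal{D} \times \Sigma_k)$ and $E_\Phi := \Phi(W)$. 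Invertibility of $Q(x,\eta)$ near the origin (because $Q(0,0)=I_p$) and of $M(\alpha(x,\eta))$ (because $M(0) \in GL_p(\R)$) then reduces \eqref{plan:identity} to the pointwise equivalence $G(x,\eta)=0 \Longleftrightarrow F(\Phi(x,\eta))=0$, from which the identity $\Phi(Z_G \cap W) = Z_F \cap E_\Phi$ is immediate.

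For the refinement, I would observe that if $F$ does not depend on the last $k_F$ entries of $\eta$, then only the first $k-k_F$ components of $h$ play any role in the right-hand side of \eqref{plan:identity}, so $h$ may be replaced by
\begin{equation}
\tilde h(\eta) := \bigl(h_1(\eta), \ldots, h_{k-k_F}(\eta),\, \eta_{k-k_F+1},\, \ldots,\, \eta_k\bigr)
\end{equation}
without disturbing \eqref{plan:identity}. The Jacobian $D\tilde h(0)$ is block upper-triangular with diagonal blocks equal to $\partial(h_1,\ldots,h_{k-k_F})/\partial(\eta_1,\ldots,\eta_{k-k_F})(0,0)$ and $I_{k_F}$, so its determinant coincides with \eqref{eq:hypothesisdeterminantK} and is non-zero by hypothesis; hence $\tilde h$ is a local diffeomorphism, and redefining $\Phi_2 := \tilde h$ gives a fibred diffeomorphism with the desired structure. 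The concluding ``in particular'' statement is the degenerate case $k_F = 0$, in which $\tilde h = h$ and \eqref{eq:hypothesisdeterminantK} reduces to the automatic non-singularity of $Dh(0)$ guaranteed by $[h] \in L_k$.

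The only real obstacle is notational bookkeeping: one must verify that the induction goes in the direction ``$\tilde G$ from $\tilde F_{\text{push}}$'', so that $h$ enters the second argument of $F$ (and hence becomes $\Phi_2$) rather than being applied to $G$. Once that is correctly identified, \eqref{plan:identity} forces the construction of $\Phi$ and nothing beyond invertibility of the matrix factors and a routine Jacobian calculation is required.
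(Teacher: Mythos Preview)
Your proposal is correct and follows essentially the same route as the paper's proof: you extract the identity $G(x,\eta)=Q(x,\eta)\,M(\alpha(x,\eta))\,F(\phi(\alpha(x,\eta)),h(\eta))$ from the definition of $\mathcal{K}$-induction applied to the pushforward, set $\Phi(x,\eta)=(\phi(\alpha(x,\eta)),h(\eta))$, verify it is a local diffeomorphism via the block-triangular Jacobian, and read off the zero-set equality from invertibility of $Q$ and $M$; the refinement via $\tilde h$ is exactly the paper's $h_{\text{Fib}}$. The only cosmetic difference is that the paper argues invertibility of $D\Phi$ on a whole neighbourhood by writing the determinant as a product, whereas you check it at the origin and implicitly use continuity, but this is immaterial.
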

\begin{proof}
	Since $[\tilde{G}]$ is $\mathcal{K}$-equivalent to $([M],[\phi])*[\tilde{F}]$ via the local diffeomorphism germ $[h]$, then a representative $h:\Sigma'_k \to \Sigma_k'$ is such that $h(0)=0$ and $Dh(0)$ is invertible. We can assume that $\Sigma_k' \in \mathcal{N}_0(\Sigma_k)$ is sufficiently small to ensure that $Dh(\eta)$ is invertible on $\Sigma_k'$. Moreover, there are $U_0 \in \mathcal{N}_0(\mathcal{D})$, $V_0 \in \mathcal{N}_0(\Sigma'_k)$, and smooth functions $Q(x,\eta)$ and $\alpha(x,\eta)$ such that $Q(x,0)=I_p$, $\alpha(x,0)=x$, and
	\begin{equation}\label{eq:equivalentKunfoldingmain}
		G(x,\eta) = Q(x,\eta) M(\alpha(x,\eta))F(\phi(\alpha(x,\eta)),h(\eta))
	\end{equation}
	for any $(x,\eta) \in U_0 \times V_0$. Without loss of generality, we assume that $\overline{U}_0 \subset \mathcal{D}$
	
	Since $\alpha$ and $Q$ are smooth, $\alpha(x,0)=x$, and $Q(x,0)=I_p$, we can find $U_1 \in \mathcal{N}_0(U_0)$ and $V_1 \in \mathcal{N}_0(V_0)$ sufficiently small as to guarantee that $D\alpha_{\eta}(x)$ and $Q(x,\eta)$ are invertible for $(x,\eta) \in U_1 \times V_1$ and that $\alpha(U_1 \times V_1)$ is in contained in a set where $M$ and $\phi$ are invertible. 
	
	Define $W = U_1 \times V_1$ and $\Phi(x,\eta) = (\phi(\alpha(x,\eta)),h(\eta))$, which is clearly of the desired form. Since
	\begin{equation}
		\det D\Phi(x,\eta) = \det D\phi(\alpha(x,\eta)) \cdot \det D\alpha_\eta (x) \cdot \det Dh(\eta),
	\end{equation}
	it follows that $D\Phi(x,\eta)$ is invertible for $(x,\eta) \in W$. Hence, $\Phi$ is a diffeomorphism on $W$, and it is easy to see that $\Phi(x,0)=(\phi(x),0)$. Let $E_\Phi$ be the image $\Phi(W)$.
	
	For the relationship between the $Z_F$ and $Z_G$, observe that, on the one hand, if $(x,\eta) \in Z_G \cap W$, then, by \cref{eq:equivalentKunfoldingmain},
	\begin{equation}
		F(\Phi(x,\eta)) = F(\phi(\alpha(x,\eta)),h(\eta)) = \left(M(\alpha(x,\eta))\right)^{-1} \,\left(Q(x,\eta)\right)^{-1} \, G(x,\eta) = 0,
	\end{equation}
	so that $\Phi(x,\eta) \in Z_G \cap E_\Phi$. On the other hand, if $(y,\xi) \in Z_F \cap E_\Phi$, \cref{eq:equivalentKunfoldingmain} ensures that $(x,\eta) := \Phi^{-1} (y,\xi)$ satisfies \begin{equation}
		G(x,\eta) = Q(x,\eta) M(\alpha(x,\eta)) F(\Phi(x,\eta)) =Q(x,\eta) G(y,\xi) = 0,
	\end{equation}
	so that $\Phi^{-1}(y,\xi) \in Z_G \cap W$. This proves \cref{eq:theoremzerosetequalityK}.
	
	Suppose now that the additional hypotheses of \cref{theoremzeroesKequivalence} hold, that is, $F$ is independent of the last $k_F<k$ entries of $\eta$ and \cref{eq:hypothesisdeterminantK} is valid. Define 
	\begin{equation}
		h_{\text{Fib}}(\eta)=\left(h_1(\eta),\ldots,h_{k-k_F}(\eta),\eta_{k-k_F+1},\ldots,\eta_k\right),
	\end{equation}
	
	It is easy to see that \cref{eq:hypothesisdeterminantK} ensures $h_{\text{Fib}}$ is a local diffeomorphism near the origin. Moreover, the independence of $F$ with respect to its last $k_F$ entries guarantees that  \cref{eq:equivalentKunfoldingmain} still holds after replacing $h$ with $h_{\text{Fib}}$.
	
	By retracing the steps of the proof with this new $h_{\text{Fib}}$, we obtain the analogous of \cref{eq:theoremzerosetequalityK} with $\Phi$ replaced by $\Phi_{\text{Fib}}(x,\eta) = (\phi(\alpha(x,\eta)),h_{\text{Fib}}(\eta))$, which is clearly of the desired form.
\end{proof}
\begin{remark}
	Two unfoldings $[\tilde{F}], [\tilde{G}] \in\mathcal{Z}^p_{n,k}$ of $\mathcal{K}$-equivalent germs satisfying the hypotheses of \cref{theoremzeroesKequivalence} are said to be $\mathcal{K}$-equivalent as families, even though it should be kept in mind that they are not unfoldings of the same germ, and thus cannot be considered equivalent unfoldings.
\end{remark}

The hypothesis of independence of $F$ with respect to the last entries of $\eta$ may seem quite arbitrary at a first glance, and thus merits an explanation. It is motivated by our implicit technical assumption in stating the lemma that $F$ and $G$ unfold equivalent germs \textit{using the same number $k$ of parameters.} Frequently that is not true, in which case we equivalently say that the unfolding with less parameters is independent of a number of its entries.

In fact, in order to prove \cref{maintheorem}, we will make use of the unfolding $\Delta_\ell(x,\mu,\e)$ of the singular germ $x\mapsto g_\ell(x,0)$ with one extra parameter ($\e$) than strictly required by its codimension. An application of \cref{theoremzeroesKequivalence} with the special form of $\Phi_2$ that explicitly maintains the separation of $\e$ from the rest of the parameters - i.e., choosing $\Phi$ as a strongly fibred diffeomorphism - is what then allows us to compare the catastrophe surface $M_\Pi$ with the suspension of the catastrophe surface of $Z_{g_\ell}$, as will be explained in \Cref{sec:proofmain}.

The next result is used to connect our hypothesis of $\mathcal{K}$-universality to the hypotheses of \cref{theoremzeroesKequivalence}. It is an important technical step in proving the main theorem of this paper, \cref{maintheorem}.
\begin{lemma} \label{propositioninduceduniversal}
	Let $[f] \in \mathcal{Z}_{n}^p$ a germ of $\mathcal{K}$-codimension $d$, and $[\tilde{H}] \in \mathcal{Z}_{n,d}^p$ be a $\mathcal{K}$-universal unfolding of $[f]$. Also, let $k\geq 0$ and $[\tilde{F}] \in \mathcal{Z}_{n,\,d+k}^p$ be an unfolding of $[f]$. Take $\tilde{H}:\mathcal{D} \times \Sigma_d \to \R^p \times \R^d$ and $\tilde{F}:\mathcal{D} \times \Sigma_{d+k} \to \R^p \times \R^{d+k}$ to be representatives of the form $\tilde{H}(x,\eta) = (H(x,\eta),\eta)$, $\tilde{F}(x,\eta,\xi)=(F(x,\eta,\mu),\eta,\xi)$, and assume that $F(x,\eta,0)=H(x,\eta)$. 
	
	Suppose that $[\tilde{F}]$ is $\mathcal{K}$-induced by $[\tilde{H}]$ via $[h]:(\R^{d+k},0) \to (\R^d,0)$. Then, 
	\begin{equation}
		\det \left(\frac{\partial h}{\partial \eta}(0,0)\right) \neq 0.
	\end{equation}
\end{lemma}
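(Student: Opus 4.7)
The approach is to exploit the $\mathcal{K}$-universality of $[\tilde H]$, which by \cref{propositionuniversaliff} makes the classes $[\partial H/\partial\eta_j|_{\eta=0}]$, $j=1,\ldots,d$, linearly independent modulo $T_{\mathcal{K},e}f$. I will then linearize the induction relation at the origin with respect to $\eta$ and compare coefficients in the quotient $\mathcal{E}_n^p/T_{\mathcal{K},e}f$ to read off $\partial h/\partial\eta(0,0)$.

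First I would unpack the $\mathcal{K}$-induction of $[\tilde F]$ by $[\tilde H]$. By \cref{definitioncontactinduced}, there exist smooth germs $\alpha$ and $Q$ with $\alpha(x,0,0)=x$ and $Q(x,0,0)=I_p$, such that
\begin{equation*}
F(x,\eta,\xi) = Q(x,\eta,\xi)\cdot H\bigl(\alpha(x,\eta,\xi),\,h(\eta,\xi)\bigr)
\end{equation*}
in a neighbourhood of the origin. Combined with the standing hypothesis $F(x,\eta,0)=H(x,\eta)$, this says in particular that $[\tilde H]$ is self-$\mathcal{K}$-induced via $\tilde h:\eta\mapsto h(\eta,0)$, and $D\tilde h(0)=\partial h/\partial\eta(0,0)$. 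The statement thus reduces to showing that any self-$\mathcal{K}$-induction of a $\mathcal{K}$-universal unfolding proceeds via a local diffeomorphism of the parameter space.

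Next I would differentiate the induction identity with respect to $\eta_i$, evaluate at $(x,0,0)$, and apply the normalizations together with $H(x,0)=f(x)$. Using the chain rule, and the fact that $F(x,\eta,0)=H(x,\eta)$ rewrites the left-hand side as $\partial H/\partial\eta_i(x,0)$, I arrive at
\begin{equation*}
\frac{\partial H}{\partial \eta_i}(x,0) = \frac{\partial Q}{\partial \eta_i}(x,0,0)\cdot f(x) + Df(x)\cdot\frac{\partial \alpha}{\partial \eta_i}(x,0,0) + \sum_{j=1}^d M_{ji}\,\frac{\partial H}{\partial \eta_j}(x,0),
\end{equation*}
where $M_{ji}:=\partial h_j/\partial\eta_i(0,0)$. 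The first two terms on the right are of the forms $[M]\cdot[f]$ and $[Df]\cdot[X]$ and hence belong to $T_{\mathcal{K},e}f$; projecting to $\mathcal{E}_n^p/T_{\mathcal{K},e}f$ yields
\begin{equation*}
\sum_{j=1}^d (\delta_{ij}-M_{ji})\left[\frac{\partial H}{\partial \eta_j}\Big|_{\eta=0}\right] \equiv 0 \pmod{T_{\mathcal{K},e}f}.
\end{equation*}

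Finally, by \cref{propositionuniversaliff} the classes $\{[\partial H/\partial\eta_j|_{\eta=0}]\}_{j=1}^d$ are linearly independent modulo $T_{\mathcal{K},e}f$, so matching coefficients forces $M_{ji}=\delta_{ij}$ for all $i,j$; in particular $\partial h/\partial\eta(0,0)=I_d$, which is certainly invertible. The step I expect to require the most care is verifying that the ``extra'' first-order terms genuinely lie in the extended $\mathcal{K}$-tangent space, since the argument hinges on reading $\partial Q/\partial\eta_i(\cdot,0,0)$ as a matrix-function germ and $\partial \alpha/\partial\eta_i(\cdot,0,0)$ as a vector-field germ that may be nonzero at $x=0$; once the conventions adopted for $T_{\mathcal{K},e}f$ in \cref{sec:germsandkequivalence} are unpacked, the inclusion is immediate.
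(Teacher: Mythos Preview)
Your proof is correct and follows essentially the same approach as the paper: both linearize the induction identity at $(\eta,\xi)=(0,0)$, use $F(x,\eta,0)=H(x,\eta)$ to reduce to a self-induction of $\tilde H$, and invoke the direct-sum characterization of $\mathcal{K}$-universality from \cref{propositionuniversaliff}. The only difference is organizational: the paper picks $w\in\ker\bigl(\partial h/\partial\eta(0,0)\bigr)$ and shows $w=0$, whereas you project coordinate-wise to the quotient and read off the slightly sharper (and correct) conclusion $\partial h/\partial\eta(0,0)=I_d$; the concern you flag about $\partial\alpha/\partial\eta_i(\cdot,0,0)$ possibly not vanishing at the origin is the same one the paper's proof tacitly absorbs into its use of the extended tangent space.
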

\begin{proof}
	Since $[\tilde{F}]$ is $\mathcal{K}$-induced by $[\tilde{H}]$ via $[h]$, it follows that there are $U_0 \in \mathcal{N}_0(\mathcal{D})$, $V_0 \in \mathcal{N}_0(\Sigma_{d+k})$, and smooth functions $Q(x,\eta,\xi)$ and $\alpha(x,\eta,\xi)$ such that $Q(x,0,0)=I_p$, $\alpha(x,0,0)=x$, and
	\begin{equation} \label{eq:propostionuniversal1}
		F(x,\eta,\xi) = Q(x,\eta,\xi) \cdot H(\alpha(x,\eta,\xi),h(\eta,\xi)).
	\end{equation}
	
	By hypothesis, $F(x,\eta,0)=H(x,\eta)$, so that we obtain from \cref{eq:propostionuniversal1} that
	\begin{equation}
		H(x,\eta) = Q(x,\eta,0) \cdot H(\alpha(x,\eta,0),h(\eta,0)).
	\end{equation}
	For each $x$, we have an identity of smooth functions of $\eta$, which can thus be differentiated at $\eta=0$ in the direction of $w \in \R^d$, yielding
	\begin{equation}
		\begin{aligned} \label{eq:propositionuniversal2}
			\frac{\partial H}{\partial \eta}(x,0) \cdot w =& \left(\frac{\partial Q}{\partial \eta} (x,0) \cdot w \right)\cdot H(x,0) \\ &+ \left(\frac{\partial H}{\partial x}(x,0) \cdot \frac{\partial \alpha}{\partial \eta}(x,0,0) + \frac{\partial H}{\partial \eta}(x,0) \cdot \frac{\partial h}{\partial \eta}(0,0)\right) \cdot w .
		\end{aligned}
	\end{equation}
	
	Let $w \in \R^d$ be given such that	
	\begin{equation}
		\frac{\partial h}{\partial \eta}(0,0) \cdot w = 0.
	\end{equation}
	We will show that $w=0$, so that the derivative of $h$ with respect to $\eta$ at $(0,0)$ must be invertible. From \cref{eq:propositionuniversal2}, it follows that
	\begin{equation} \label{eq:propositionuniversal3}
		\frac{\partial H}{\partial \eta}(x,0) \cdot w = \left(\frac{\partial Q}{\partial \eta} (x,0) \cdot w \right)\cdot H(x,0) + \frac{\partial H}{\partial x}(x,0) \cdot \frac{\partial \alpha}{\partial \eta}(x,0,0) \cdot w
	\end{equation}
	Observe that, since $H(x,0) = f(x)$, the right-hand side of \cref{eq:propositionuniversal3} is an element of the extended $\mathcal{K}$-tangent space $T_{\mathcal{K},e} f$. Moreover, it is clear that the left-hand side of the same identity belongs to the subspace
	\begin{equation}
		\textnormal{span}_\mathbb{R} \left(\left[\frac{\partial H}{\partial \eta_1}\Big|_{\eta=0}\right],\ldots,\left[\frac{\partial H}{\partial \eta_d}\Big|_{\eta=0}\right]\right).
	\end{equation}
	
	Considering that, by hypothesis, $[\tilde{H}]$ is a $\mathcal{K}$-universal unfolding of $f$, we know by \cref{propositionuniversaliff} that
	\begin{equation}
		T_{\mathcal{K},\e}f \cap \textnormal{span}_\mathbb{R} \left(\left[\frac{\partial H}{\partial \eta_1}\Big|_{\eta=0}\right],\ldots,\left[\frac{\partial H}{\partial \eta_d}\Big|_{\eta=0}\right]\right) = \{0\}.
	\end{equation}
	Thus, it follows at once that
	\begin{equation}
		\frac{\partial H}{\partial \eta}(x,0) \cdot w = 0.
	\end{equation}
	If $w \neq0$, then there would be a non-trivial linear combination of elements of 
	\begin{equation}
		\left\{\left[\frac{\partial H}{\partial \eta_1}\Big|_{\eta=0}\right],\ldots,\left[\frac{\partial H}{\partial \eta_d}\Big|_{\eta=0}\right]\right\}
	\end{equation}
	that vanishes, contradicting the linear independence of this family established in \cref{propositionuniversaliff}. 
	
	Therefore, it follows that $w=0$, concluding the proof.
\end{proof}

%%%%%%%%%%%%%%%%%%%%%%%%%%%%%%%%%%%%%%%%%%%%%%%%%%%%%%%%%%%%
%%%%%%%%%%%%%%%%%%%%%%%%%%%%%%%%%%%%%%%%%%%%%%%%%%%%%%%%%%%%
\subsection{Persistence of catastrophes: Proof of \cref{maintheorem}} \label{sec:proofmain}
Having proved the auxiliary results above, the proof of our main result, \cref{maintheorem}, is as follows.

By hypothesis, $[\tilde{H}] \in \mathcal{Z}_{n,k}^n$, defined by $\tilde{H}(x,\mu)=(H(x,\mu),\mu)=(g_\ell(x,\mu),\mu)$ is a $\mathcal{K}$-universal unfolding of the the germ $[s] \in \mathcal{Z}_n^n$ given by $s(x)=g_\ell(x,0)$. In particular, the unfolding $[\tilde{F}] \in \mathcal{Z}_{n, \, k+1}^n$ defined by $\tilde{F}(x,\mu,\e) = (F(x,\mu,\e),\mu,\e) = (\Delta_\ell(x,\mu,\e),\mu,\e)$ is $\mathcal{K}$-induced by $[\tilde{H}]$. Hence, let $Q(x,\mu,\e)$, $\alpha(x,\mu,\e)$ and $h(\mu,\e)$ be such that 
\begin{equation} \label{eq:maintheorem1}
	F(x,\mu,\e) = Q(x,\mu,\e) \cdot H(\alpha(x,\mu,\e),h(\mu,\e))
\end{equation}

It is easy to see that, since $\Delta_\ell(x,\mu,0)=g_\ell(x,\mu)$, it follows that $F(x,\mu,0) = H(x,\mu)$. Thus, all the hypotheses of \cref{propositioninduceduniversal} are valid, ensuring that
\begin{equation}
	\det \left(\frac{\partial h}{\partial \mu}(0,0)\right) \neq 0.
\end{equation}

Define $[\tilde{G}] \in \mathcal{Z}_{n,\,k+1}^n$ by $\tilde{G}(x,\mu,\e) = (G(x,\mu,\e),\mu,\e)=(g_\ell(x,\mu),\mu,\e)$. In particular, we have that $G(x,\mu,\e) = H(x,\mu)$. Hence, \cref{eq:maintheorem1} ensures that
\begin{equation} \label{eq:maintheorem2}
	F(x,\mu,\e) = Q(x,\mu,\e) \cdot G(\alpha(x,\mu,\e),h_{\textnormal{ex}}(\mu,\e)),
\end{equation}
where $h_{\textnormal{ex}}(\mu,\e) = (h(\mu,\e),\e)$, which is clearly a local diffeomorphism near the origin of $\R^{k+1}$. Therefore, $[\tilde{F}]$ is $\mathcal{K}$-equivalent to $[\tilde{G}]$ via $[h_{\text{ex}}]$.

Finally, an application of \cref{theoremzeroesKequivalence} guarantees the existence of a diffeomorphism $\Phi:U \to V$, satisfying $\Phi(x,\mu,\e) =(\Phi_1(x,\mu,\e),\Phi_2(\mu,\e),\e) \in \R^n \times \R^k \times \R$, $\Phi(x,0,0)=(x,0,0)$, and
\begin{equation}  \label{eq:maintheorem3}
	Z_F \cap V= \Phi\left(Z_G \cap U\right).
\end{equation}

By definition of $G$, it is clear that $Z_G \cap U = (Z_{g_\ell} \times \R) \cap U$. Similarly, $Z_F \cap V = Z_{\Delta_\ell} \cap V$. Thus, considering \cref{corollarycatastrophesurface}, it follows that $M_{\Pi} \cap V = \Phi \left((Z_{g_\ell} \times \R) \cap U \right) \cup V_{\e=0}$. The fact that $Z_{g_\ell} \times \{0\}$ is invariant under $\Phi$, follows from intersecting both sides of \cref{eq:maintheorem3} with the set $\{(x,\mu,0) \in \R^3 \}$, because the last coordinate function of $\Phi$ is $\e$ identically. In fact, by doing so, we obtain 
\begin{equation}
	\left(Z_{g_\ell} \times \{0\}\right) \cap V = \Phi\left(\left(Z_{g_\ell} \times \{0\}\right) \cap U\right),
\end{equation} 
proving the invariance.

%%%%%%%%%%%%%%%%%%%%%%%%%%%%%%%%%%%%%%%%%%%%%%%%%%%%%%%%%%%%
%%%%%%%%%%%%%%%%%%%%%%%%%%%%%%%%%%%%%%%%%%%%%%%%%%%%%%%%%%%%
\subsection{Persistence of bifurcation diagrams: proof of \cref{theorembifdiagram}}\label{sec:proofbif}
In this section, we make use of \cref{maintheorem} to prove the \cref{theorembifdiagram}, concerning the persistence of bifurcation diagrams of equilibria.

Observe that $\mathcal{D}_{\ell,0}$ is defined by $\Delta_\ell(x,\mu,0)=0$ and $\mathcal{D}_\e$ by $\Delta_\ell(x,\mu,\e)=0$. The fact that $g_\ell(x,\mu)=\Delta_\ell(x,\mu,0)$ is $\mathcal{K}$-universal ensures that it is a submersion near $(0,0)$ (for a proof of this fact, see \cite[Proposition 14.3]{Montaldi_2021}). Thus, by smoothness with respect to $\e$, it follows that, for small fixed $\e\neq0$, $(x,\mu) \mapsto \Delta_\ell(x,\mu,\e)$ is also a submersion near the origin. Hence, $\mathcal{D}_{\ell,0}$ and $\mathcal{D}_\e$ are smooth manifolds of codimension $k$ by the Regular Value Theorem.

The fact that $\mathcal{D}_\e$ is $\mathcal{O}(\e)$-close to $\mathcal{D}_{\ell,0}$ follows from \cref{maintheorem}. In fact, since $\mathcal{D}_\e$ can be obtained, for $\e \neq 0$, by intersecting $M_\Pi$ with the hyperplane attained by fixing $\e$, it follows that that $\mathcal{D}_\e$ is given by the image under $\Phi$ of $Z_{g_\ell} \times \{\Phi_3^{-1}(\e)\}$. Thus, if $\e' := \Phi_3^{-1}(\e)$,
\begin{equation}
	\mathcal{D}_\e = \{(\Phi_1(x,\mu,\e'),\Phi_2(\mu,\e')) : (x,\mu,\e') \in \left(Z_{g_\ell} \times \R \cap U\right) \}
\end{equation}
Considering that, by definition, $\e' = \mathcal{O}(\e)$ and that $\Phi$ is smooth, if follows that $\mathcal{D}_\e$ is $\mathcal{O}(\e)$-close to 
\begin{equation}
	\{(\Phi_1(x,\mu,0),\Phi_2(\mu,0)) : (x,\mu) \in Z_{g_\ell} \},
\end{equation}
which coincides with $Z_{g_\ell} = \mathcal{D}_{\ell,0}$ by the invariance statement of \cref{maintheorem}. This concludes the proof.

%%%%%%%%%%%%%%%%%%%%%%%%%%%%%%%%%%%%%%%%%%%%%%%%%%%%%%%%%%%%
%%%%%%%%%%%%%%%%%%%%%%%%%%%%%%%%%%%%%%%%%%%%%%%%%%%%%%%%%%%%

\subsection{Proof of stabilisation of non-stable families: the transcritical case}\label{sec:nonstabtran}

For a 1-dimensional vector field, the transcritical bifurcation is generally described as occurring in a 1-parameter family, as two equilibria collide and pass through each other, exchanging their stability properties. A normal form for the transcritical bifurcation is $\dot x = \mu x + x^2$.

Families displaying such behaviour are not stable, in that a small perturbation generally changes the phase portraits and breaks the bifurcation. However, they are still studied because they appear typically in 1-parameter families displaying a fairly common property: existence of an equilibrium for every value of the parameter (see \cite[Section 3.4]{Guckenheimer1983}).  

Let us begin with a definition of the transcritical bifurcation based on the concept of $\mathcal{K}$-equivalence.

\begin{definition} \label{def:transcriticaln1}
	A 1-parameter family of $1$-dimensional vector fields $F(x,\mu)$ is said to undergo a transcritical bifurcation at the origin for $\mu=0$ if
	\begin{enumerate}
		\item The germ of $f: x \mapsto F(x,0)$ at the origin is $\mathcal{K}$-equivalent to the germ of $s_{1,0}(x)=x^2$.
		\item  Let $([M],[\phi]) \in GL_n(\mathcal{E}_n)\times L_n$ be such that $[f]=[M] \cdot [s_{1,0}] \circ [\phi]$. The pushforward $([M],[\phi]) *[\tilde{\mathcal{U}}]$ of the unfolding $[\mathcal{U}] \in \mathcal{Z}_{1,1}^1$, given by $\tilde{\mathcal{U}}(x,\mu)=(\mathcal{U}(x,\mu),\mu)$ and $\mathcal{U}(x,\mu)=\mu x + x^2$, is $\mathcal{K}$-equivalent to $ [\tilde{F}]$ via the identity, where  $\tilde{F}(x,\mu)=(F(x,\mu),\mu)$.
	\end{enumerate}
\end{definition}
The definition essentially states that a transcritical family is characterized by a singularity whose unfolding is, up to $\mathcal{K}$-equivalence, given by the normal form $x \mapsto \mu x - x^2$. We now consider what happens when a transcritical bifurcation occurs in a guiding system. 

The important observation is that the normal form $\mu x +x^2$ of the transcritical can be `embedded' into the versal family $\lambda + z^2$ of the fold, by taking $(z(x,\mu),\lambda(\mu)) = (x+\mu/2,-\mu^2/4)$. \Cref{theorembifdiagram} can then be applied to the versal family $\lambda+y^2$, so that the bifurcation diagram of periodic orbits must be given by zeros of an $\mathcal{O}(\e)$-perturbation of it. In essence the possible bifurcation diagrams for a fixed $\e \neq0$ are given by $\eta(\e)+\lambda+y^2=0$, which, returning to the original coordinates, is $\eta(\e) +\mu x+ x^2 $. One can check that two different diagrams emerge depending on the sign of $\eta(\e)$. Namely, two nearby folds if $\eta>0$ and two approaching zeros that suffer no bifurcation if $\eta<0$.
\subsubsection{The canonical form of the displacement function}

\begin{proposition} \label{propositiontransn1}
	Let $n=k=1$ and suppose that the guiding system $\dot x = g_\ell (x,\mu)$ undergoes a transcritical bifurcation at the origin for $\mu=0$.
	Then, there are $\e_1 \in (0,\e_0)$, an open interval $I$ containing $0 \in \R$, an open neighbourhood $U_\Sigma \subset \Sigma$ of $0$, and smooth functions $\zeta,Q: I\times U_\Sigma \times (-\e_1,\e_1) \to \R$, $a,S: U_\Sigma \times (-\e_1,\e_1) \to \R$, and $b:(-\e_1,\e_1) \to \R$ such that
	\begin{enumerate}[label=(T.\Roman*)]
		\item \label{propertytransmain} If $\Delta_\ell$ is the displacement function of order $\ell$ of \cref{eq:standardsystemintro}, then 
		\begin{align*}
			\Delta_\ell(x,\mu,\e) = Q(x,\mu,\e)\left( \zeta^2(x,\mu,\e) + S(\mu,\e) a^2(\mu,\e)  + S(\mu,\e) b(\e) \right)
		\end{align*}
		for $(x,\mu,\e) \in I \times U_\Sigma \times (-\e_1,\e_1)$.
		\item \label{propertytransphi} For each $(\mu,\e) \in U_\Sigma \times (-\e_1,\e_1)$, the map $\zeta_{(\mu,\e)}:x \mapsto \zeta(x,\mu,\e)$ is a diffeomorphism on the interval $I$.
		\item \label{propertytransa} For each $\e \in(-\e_1,\e_1)$, $a_\e: \mu \mapsto a(\mu,\e)$ is a diffeomorphism on $U_\Sigma$.
		\item \label{propertytranseq}$b(0)=0$, $a(0,0) =0$, $\zeta(0,0,0)=0$, and $\sgn \left(Q(0,0,0)\right) = \sgn \left(\frac{\partial^2 g_\ell}{\partial x^2}(0,0)\right)$.
		\item \label{propertytransS} $S(\mu,\e)<0$ for any $(\mu,\e) \in U_\Sigma \times (-\e_1,\e_1)$.
	\end{enumerate}  
\end{proposition}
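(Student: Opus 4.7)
The plan is to apply the parametric Morse lemma twice: first in the variable $x$, to separate the $x$-dependence into $\zeta^2$, and then in the variable $\mu$, to produce the $a^2+b$ structure that encodes the transcritical-to-fold unfolding. The transcritical hypothesis of \cref{def:transcriticaln1}, combined with the definition of $\mathcal{K}$-equivalence via the identity, yields on a neighbourhood of the origin a factorisation
\begin{equation*}
g_\ell(x,\mu)=\tilde{Q}(x,\mu)\,\tilde{\phi}(x,\mu)\bigl(\mu+\tilde{\phi}(x,\mu)\bigr),
\end{equation*}
with $\tilde{Q}(0,0)\neq 0$, $\tilde{\phi}(0,0)=0$, and $\partial_x\tilde{\phi}(0,0)\neq 0$ (one obtains this by unwinding the pushforward and absorbing $M$ composed with the $\mathcal{K}$-induction diffeomorphism into the multiplier). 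Completing the square gives $g_\ell=\tilde{Q}\bigl[(\tilde{\phi}+\mu/2)^2-\mu^2/4\bigr]$, from which $\partial_x g_\ell(0,0)=0$, $\partial_\mu g_\ell(0,0)=0$, and $\partial_x^2 g_\ell(0,0)=2\tilde{Q}(0,0)(\partial_x\tilde{\phi}(0,0))^2$; the sign $q:=\sgn\tilde{Q}(0,0)$ will drive all sign conditions.

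Since the flow of \cref{eq:completeellaveragedsystemintro} at $\e=0$ is trivial, $\Delta_\ell(x,\mu,0)=T g_\ell(x,\mu)$, and by smoothness $\partial_x\Delta_\ell(0,0,0)=0$ while $\partial_x^2\Delta_\ell(0,0,0)\neq 0$. The parametric Morse lemma applied to $\Delta_\ell$ in the variable $x$ with $(\mu,\e)$ as parameters then supplies a smooth critical-point map $x_0(\mu,\e)$ with $x_0(0,0)=0$ and $\partial_x\Delta_\ell(x_0(\mu,\e),\mu,\e)\equiv 0$, together with a smooth $\zeta(x,\mu,\e)$ that is a diffeomorphism in $x$ for each fixed $(\mu,\e)$, vanishes at $x=x_0(\mu,\e)$, and satisfies
\begin{equation*}
\Delta_\ell(x,\mu,\e)=q\,\zeta(x,\mu,\e)^2+H(\mu,\e),\qquad H(\mu,\e):=\Delta_\ell(x_0(\mu,\e),\mu,\e).
\end{equation*}

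Next I analyse $H$. Clearly $H(0,0)=0$, and the envelope identity $\partial_x\Delta_\ell(x_0,\mu,\e)\equiv 0$ yields $\partial_\mu H(0,0)=T\partial_\mu g_\ell(0,0)=0$. For the decisive sign, solving $\partial_x g_\ell(x,\mu)=0$ to leading order from the completed-square form gives $\tilde{\phi}(x_0(\mu,0),\mu)+\mu/2=\mathcal{O}(\mu^2)$, hence
\begin{equation*}
H(\mu,0)=T g_\ell(x_0(\mu,0),\mu)=-\tfrac{T\tilde{Q}(0,0)}{4}\mu^2+\mathcal{O}(\mu^3),
\end{equation*}
so $\sgn\bigl(\partial_\mu^2 H(0,0)\bigr)=-q$. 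A second application of the parametric Morse lemma, now to $H/q$ in the variable $\mu$ with parameter $\e$, produces a smooth $\mu_0(\e)$ with $\mu_0(0)=0$ and a diffeomorphism $\mu\mapsto a(\mu,\e)$ with $a(\mu_0(\e),\e)=0$, satisfying $H(\mu,\e)=q\bigl(-a(\mu,\e)^2+K(\e)\bigr)$ where $K(\e):=H(\mu_0(\e),\e)/q$ and $K(0)=0$.

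Setting $Q(x,\mu,\e):=q$, $S(\mu,\e):=-1$, and $b(\e):=-K(\e)$ combines the two expansions into
\begin{equation*}
\Delta_\ell=q\zeta^2+q(-a^2+K)=Q\bigl(\zeta^2+Sa^2+Sb\bigr),
\end{equation*}
and properties \ref{propertytransmain}--\ref{propertytransS} are then immediate: $b(0)=0$, $a(0,0)=0$, $\zeta(0,0,0)=0$, $\sgn Q(0,0,0)=q=\sgn(\partial_x^2 g_\ell(0,0))$, and $S\equiv-1<0$. The principal technical point is the sign identity $\sgn(\partial_\mu^2 H(0,0))=-q$, which is precisely what forces $S<0$ in the canonical form; it hinges on a careful use of the transcritical factorisation, in particular the leading-order behaviour $\tilde{\phi}+\mu/2=\mathcal{O}(\mu^2)$ along the critical curve of $g_\ell$.
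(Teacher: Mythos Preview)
Your argument is correct and reaches the same canonical form, but the mechanism is different from the paper's. The paper invokes $\mathcal{K}$-versality twice: first, versality of the fold unfolding $y^2+\eta$ yields $\Delta_\ell=Q(x,\mu,\e)\bigl(\zeta^2(x,\mu,\e)+h(\mu,\e)\bigr)$ with genuinely non-constant $Q$; then, after a derivative computation showing $\partial_\mu h(0,0)=0$ and $\partial_\mu^2 h(0,0)=-\tfrac12$, versality is applied again to the one-parameter unfolding $\e\mapsto h(\cdot,\e)$ of $h(\cdot,0)=\mu^2 r(\mu)$, producing $h=S(\mu,\e)(a^2+b)$ with $S(\mu,0)=r(\mu)$. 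You instead replace both versality steps with the parametric Morse lemma, first in $x$ and then in $\mu$, obtaining the special constant choices $Q\equiv q=\pm1$ and $S\equiv-1$. Since the proposition only demands the sign conditions in \ref{propertytranseq}--\ref{propertytransS}, these constants suffice; your route is more elementary and self-contained, while the paper's is consonant with the singularity-theoretic machinery used throughout and yields slightly more information (the specific value $S(0,0)=-\tfrac14$, and $Q(x,0,0)=M(x)$). The one point worth noting explicitly in your write-up is the justification of $\tilde\phi(x_0(\mu,0),\mu)+\mu/2=\mathcal{O}(\mu^2)$: the curve $\{\tilde\phi+\mu/2=0\}$ is not the critical curve of $g_\ell$ when $\partial_x\tilde Q\neq0$, but differs from it by $\mathcal{O}(\mu^2)$, which is enough; you assert this but the reader would benefit from a sentence spelling it out.
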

\begin{proof}
	We begin by observing that $\Delta_\ell(x,\mu,0) = T g_\ell(x,\mu)$, by definition of the displacement function of order $\ell$. Let $[s_{1,0}]$ be as in \cref{def:transcriticaln1} and $[\tilde{F}]$ be the 2-parameter unfolding of $f: x \mapsto Tg_\ell(x,0)$ given by $\tilde{F}(x,\mu,\e) = (\Delta_\ell(x,\mu,\e),\mu,\e)$. By hypothesis, there are $P(x,\mu) \in \R$ and $\psi(x,\mu) \in \R$ such that
	\begin{equation}\label{eq:transcriticalcanonical1}
		\Delta_\ell(x,\mu,0) = P(x,\mu) \left(\mu \psi(x,\mu) + \psi^2(x,\mu)\right)
	\end{equation}
	and, defining $M(x):=P(x,0)$, and $\phi(x):=\psi(x,0)$, it holds that $[f] = [M] \cdot [s_{1,0}] \circ [\phi]$. 
	
	Let $\tilde{H}(x,\eta)=(y^2+\eta,\eta) \in \R \times \R$. Since the 1-parameter unfolding $[\tilde{H}]$ of $[s_{1,0}]$ is $\mathcal{K}$-versal, it follows that $[\tilde{F}]$ must be $\mathcal{K}$-induced by $([M],[\phi]) * [\tilde{H}]$. Hence, there is a neighbourhood $\tilde{V}_1 : = I \times (-\tilde{\mu}_1,\tilde{\mu}_1) \times (-\tilde{\e}_1,\tilde{\e}_1)$ of the origin in $\R^{1+1+1}$ and smooth functions $h(\mu,\e) \in \R$, $Q(x,\mu,\e) \in \R$, and $\zeta(x,\mu,\e) \in \R $ such that $h(0,0) = 0$, $Q(x,0,0) = M(x) \neq 0$, $\zeta(x,0,0) = \phi(x)$, and
	\begin{equation}\label{eq:transcriticalcanonical2}
		\Delta_\ell(x,\mu,\e) = Q(x,\mu,\e) \cdot \left(\zeta^2(x,\mu,\e)+h(\mu,\e)\right).
	\end{equation}
	Because $\zeta(x,0,0)=\phi(x)$ is a local diffeomorphism, assuming that $\tilde{\mu}_1$ and $\tilde{\e}_1$ are sufficiently small, we can ensure that $\zeta_{(\mu,\e)}:x \mapsto \zeta(x,\mu,\e)$ is a diffeomorphism on $I$ for any $(\mu,\e) \in (-\tilde{\mu}_1,\tilde{\mu}_1) \times (-\tilde{\e}_1,\tilde{\e}_1)$.
	
	Since $[f] = [M]\cdot [s_{1,0}] \circ [\phi]$, it follows by twice differentiating at the origin that 
	\begin{equation}
		T \frac{\partial^2 g_\ell}{\partial x^2}(0,0) = 2 M(0) \left(\phi'(0)\right)^2.
	\end{equation} 
	Considering that $M(x)=Q(x,0,0)$, we obtain 
	\begin{equation}
		\sgn \left(\frac{\partial^2 g_\ell}{\partial x^2}(0,0)\right) = \sgn (M(0)) = \sgn \left(Q(0,0,0)\right) \neq 0.
	\end{equation}
	
	A combination of \cref{eq:transcriticalcanonical1,eq:transcriticalcanonical2} yields
	\begin{equation} \label{eq:transcriticalcanonicalmain}
		P(x,\mu) \left(\mu \psi(x,\mu) + \psi^2(x,\mu)\right) = Q(x,\mu,0) \cdot \left(\zeta^2(x,\mu,0)+h(\mu,0)\right)
	\end{equation}
	
	Differentiating both sides of \cref{eq:transcriticalcanonicalmain} with respect to $\mu$ at the origin and considering that $\psi(0,0) = \zeta(0,0,0)= \phi(0) = 0$, it follows that
	\begin{equation}\label{eq:transcriticalcanonicaldhdmu}
		\frac{\partial h}{\partial \mu}(0,0) = 0.
	\end{equation}
	
	Now, differentiating both sides of \cref{eq:transcriticalcanonicalmain} twice with respect to $x$ at the origin and considering that $M(0)$ is invertible, we obtain
	\begin{equation}\label{eq:transcriticalcanonicaldxx}
		\left(\frac{\partial \psi}{\partial x}(0,0)\right)^2 =  \left(\frac{\partial \zeta}{\partial x}(0,0,0)\right)^2.
	\end{equation}
	Partial differentiation with respect to $x$ and $\mu$ yields
	\begin{equation}\label{eq:transcriticalcanonicaldmux}
		\frac{\partial \psi}{\partial x}(0,0) + 2 \left( \frac{\partial \psi}{\partial \mu}(0,0)\right) \left( \frac{\partial \psi}{\partial x}(0,0)\right)= 2 \left(\frac{\partial \zeta}{\partial x}(0,0,0)\right) \left(\frac{\partial \zeta}{\partial \mu}(0,0,0)\right).
	\end{equation}
	Finally, differentiating both sides of \cref{eq:transcriticalcanonicalmain} twice with respect to $\mu$ and considering \cref{eq:transcriticalcanonicaldhdmu}, we obtain
	\begin{equation}\label{eq:transcriticalcanonicaldmumu}
		2 \frac{\partial \psi}{\partial \mu}(0,0) + 2 \left(\frac{\partial \psi}{\partial \mu}(0,0)\right)^2 = 2 \left(\frac{\partial \zeta}{\partial \mu}(0,0,0)\right)^2  + \frac{\partial^2 h}{\partial \mu^2}(0,0).
	\end{equation}
	
	Squaring \cref{eq:transcriticalcanonicaldmux} and considering \cref{eq:transcriticalcanonicaldxx}, it follows that
	\begin{equation}\label{eq:transcriticalcanonical3}
		1 + 4 \frac{\partial \psi}{\partial \mu}(0,0) + 4 \left(\frac{\partial \psi}{\partial \mu}(0,0)\right)^2 = 4 \left(\frac{\partial \zeta}{\partial \mu}(0,0,0)\right)^2.
	\end{equation} 
	Hence, combining with \cref{eq:transcriticalcanonicaldmumu}, we obtain
	\begin{equation}\label{eq:transcriticalcanonicaldhdmumu}
		\frac{\partial^2 h}{\partial \mu^2} (0,0) = -\frac{1}{2}.
	\end{equation}
	
	Considering \cref{eq:transcriticalcanonicaldhdmu,eq:transcriticalcanonicaldhdmumu}, it follows from Taylor's theorem that $h(\mu,0) = \mu^2 r(\mu)$, where $r$ is smooth and $r(0)=-\frac{1}{4}<0$. Hence, it is clear that $[h_0] = [r] \cdot [s_{1,0}]$, and $[h_0]$ is $\mathcal{K}$-equivalent to $[s_{1,0}]$. Thus, as before, it follows that the 1-parameter unfolding $[h]$ of $[h_0]$ must be $\mathcal{K}$-induced by $([r],[\Id]) * [\tilde{H}]$, that is, there are smooth real functions $S(\mu,\e)$, $a(\mu,\e)$, and $b(\e)$, defined on $(-\tilde{\mu}_2,\tilde{\mu}_2) \times (-\tilde{\e}_2,\tilde{\e}_2) \subset (-\tilde{\mu}_1,\tilde{\mu}_1) \times (-\tilde{\e}_1,\tilde{\e}_1)$, such that $S(\mu,0)=r(\mu)$, $a(\mu,0)=\mu$, $b(0) = 0$, and
	\begin{equation}
		h(\mu,\e) = S(\mu,\e) \cdot \left(a^2(\mu,\e) + b(\e)\right).
	\end{equation}
	holds locally near the origin. Since $S(0,0)=r(0)<0$, we can assume that $\tilde{\mu}_2$ and $\tilde{\e}_2$ are sufficiently small as to ensure that $S(\mu,\e)<0$ for any $(\mu,\e) \in (-\tilde{\mu}_2,\tilde{\mu}_2) \times (-\tilde{\e}_2,\tilde{\e}_2)$. Moreover, they can be assumed sufficiently small to guarantee that $a_\e$ is a diffeomorphism as well.
\end{proof}

\subsubsection{Proof of \cref{theoremtranscriticalcatastrophe}}

By definition of $\Delta_\ell$, it is easy to see that 
\begin{equation}
	\frac{\partial \Delta_\ell}{\partial \e}(0,0,0) = g_{\ell+1}(0,0),
\end{equation}
which is non-zero by hypothesis. Let $V : = I \times U_\Sigma \times (-\e_1,\e_1)$ as given in \cref{propositiontransn1}. Then, \cref{propertytransmain} ensures that 
\begin{equation}
	\frac{\partial \Delta_\ell}{\partial \e}(0,0,0) = Q(0,0,0) S(0,0) b'(0).
\end{equation}
Thus, considering \cref{propertytranseq,propertytransS}, it follows that 
\begin{equation}\label{eq:transcriticalproof1}
	b'(0) = \sigma \frac{g_{\ell+1}(0,0)}{|Q(0,0,0) S(0,0)|},
\end{equation}
where 
\begin{equation}
	\sigma = \sgn \left(\frac{\partial^2 g_\ell}{\partial x^2}(0,0)\right) \in \{-1,1\}.
\end{equation}

Now, \cref{propertytransmain} also ensures that $\Delta_\ell(x,\mu,\e)=0$ is equivalent to 
\begin{equation}\label{eq:transcriticaldeltazero}
	b(\e) =  -\frac{1}{S(\mu,\e)} \zeta^2(x,\mu,\e) - a^2 (\mu,\e)
\end{equation}
in $V$.
Define $\Psi(x,\mu,\e) = (\Psi_1(x,\mu,\e),\Psi_2(\mu,\e),\Psi_3(\e))$ by
\begin{equation}
	\Psi_1(x,\mu,\e) = \frac{\zeta(x,\mu,\e)}{\sqrt{-S(\mu,\e)}}, \quad \Psi_2(\mu,\e) = a(\mu,\e), \quad \Psi_3(\e) = b(\e).
\end{equation}
Hence, $\Psi$ is a strongly-fibred diffeomorphism onto its image $U$ and \cref{eq:transcriticaldeltazero} is itself equivalent to
\begin{equation}
	\Psi_3(\e) =  \left(\Psi_1(x,\mu,\e)\right)^2 - \left(\Psi_2(\mu,\e)\right)^2.
\end{equation}
Thus, $\Delta_\ell(x,\mu,\e) =0 \iff \Psi(x,\mu,\e) \in \left\{(y,\theta,\eta) \in \R^3: \eta = y^2-\theta^2\right\}$. Defining $\Phi = \Psi^{-1}$, it follows that $\Delta_\ell(x,\mu,\e) = 0 \iff (x,\mu,\e) \in \Phi\left(\left\{(y,\theta,\eta) \in \R^3: \eta = y^2-\theta^2\right\}\right) $.

Furthermore, since $\Phi_3(\e) = b^{-1}(\e)$, it follows from \cref{eq:transcriticalproof1} that
\begin{equation}
	\sgn \left(\Phi_3'(0)\right) = \sigma \cdot \sgn\left(g_{\ell+1}(0,0)\right).
\end{equation}
Finally, since $\Delta_\ell(x,\mu,0) = T g_\ell(x,\mu)$, it is easy to see that, if we fix $\e=0$, we have $g_\ell(x,\mu) =0 \iff \left(\Psi_1(x,\mu,0)\right)^2 = \left(\Psi_2(\mu,0)\right)^2$, proving that 
\begin{equation}
	\left(Z_{g_\ell} \times \{0\}\right) \cap V = \Phi\left(\left\{(y,\theta,0) \in \R^3 : y^2-\theta^2 = 0 \right\} \cap U\right).
\end{equation}

\subsubsection{Description of the perturbed bifurcation}
We now make use of the results above to describe the behaviour of $\Pi$ for values of the parameter near the point of bifurcation. Essentially, we show that, in one direction of variation of $\e$, the transcritical is broken into two nearby folds, whereas in the other no bifurcation occurs.

We assume, without loss of generality, that 
\begin{equation}
	\sgn\left(\frac{\partial^2 g_\ell}{\partial x^2}(0,0)\right) \, \sgn \left(g_{\ell+1}(0,0)\right)=1,
\end{equation}
which is equivalent to assuming the orientation of the saddle obtained for the catastrophe surface in \cref{theoremtranscriticalcatastrophe}. If this product is negative, the behaviour is analogous, but mirrored with respect to the sign of the perturbation parameter $\e$.
\begin{proposition} \label{propositiontranscriticalbreakage1}
	Let $n=1$ and suppose the guiding system $\dot x = g_\ell (x,\mu)$ undergoes a transcritical bifurcation at the origin for $\mu=0$.
	Also, let $I$, $U_\Sigma$ and $\e_1$ be as provided in \cref{propositiontransn1}, and define $\sigma=\sgn \left(\frac{\partial^2 g_\ell}{\partial x^2}(0,0) \right)$ and $\sigma' = \sgn \left(g_{\ell+1}(0,0)\right)$.
	If $\sigma \sigma' =1$, there are $(x_2,\mu_2,\e_2) \in (I\cap\R^*_+) \times (U_\Sigma \cap \R^*_+) \times (0,\e_1)$ and continuous functions  $\mu_c,\mu_e:(-\e_2,\e_2) \to (-x_2,x_2)$ such that the following hold:
	\begin{enumerate}[label=(\alph*)]
		\item\label{propertytransbreakagea} For each $\e \in (-\e_2,0)$, the family $(x,\mu) \mapsto \Pi(x,\mu,\e)$ undergoes two fold-like bifurcations in the set $(-x_2,x_2)$ as $\mu$ traverses $(-\mu_2,\mu_2)$, one at $\mu=\mu_e(\e)\in(0,\mu_2)$ and another at $\mu=\mu_c(\e)\in(-\mu_2,0)$. In other words, if we take $\mu$ to grow through $(-\mu_2,\mu_2)$, we observe the collision of two hyperbolic fixed points as $\mu=\mu_c(\e)$ and the subsequent emergence of two hyperbolic fixed points at $\mu=\mu_e(\e)$. When $\mu=\mu_c(\e)$ or $\mu=\mu_e(\e)$, there is one fixed point that is nonhyperbolic. Apart from those mentioned, there are no other fixed points in the interval $(-x_2,x_2)$. In particular, there are no fixed points in this interval for $\mu \in (-\mu_c(\e),\mu_e(\e))$.
		\item \label{propertytransbreakageb} For each $\e \in (0,\e_2)$, the family $(x,\mu)\mapsto \Pi(x,\mu,\e)$ does not undergo any bifurcation in $(-x_2,x_2)$ as $\mu$ traverses $(-\mu_2,\mu_2)$. If we take $\mu$ to grow past this interval, we observe exactly two hyperbolic fixed points in $(-x_2,x_2)$, first approaching without colliding, and then straying apart.
	\end{enumerate} 
\end{proposition}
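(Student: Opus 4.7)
\medskip

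The plan is to work directly from the canonical form provided by \cref{propositiontransn1}, which gives, on $I \times U_\Sigma \times (-\e_1,\e_1)$,
\begin{equation*}
	\Delta_\ell(x,\mu,\e) = Q(x,\mu,\e)\bigl(\zeta^2(x,\mu,\e) + S(\mu,\e)\bigl(a^2(\mu,\e)+b(\e)\bigr)\bigr),
\end{equation*}
with $Q$ nowhere zero, $S<0$, and $\zeta_{(\mu,\e)}$, $a_\e$ local diffeomorphisms. Since $Q\neq0$, fixed points of $\Pi$ correspond bijectively to solutions of $\zeta^2(x,\mu,\e) = -S(\mu,\e)\bigl(a^2(\mu,\e)+b(\e)\bigr)$, and this is the equation I will analyse. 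The hypothesis $\sigma\sigma'=1$ combined with the formula $b'(0) = \sigma g_{\ell+1}(0,0)/|Q(0,0,0)S(0,0)|$ derived in the proof of \cref{theoremtranscriticalcatastrophe} gives $b'(0)>0$, so I may shrink to neighbourhoods where $b$ is a strictly increasing diffeomorphism with $\sgn b(\e) = \sgn \e$.

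For $\e\in(0,\e_2)$: the right-hand side $-S(a^2+b(\e))$ is bounded below by a positive multiple of $b(\e)>0$, so the equation rewrites as $\zeta(x,\mu,\e) = \pm\sqrt{-S(\mu,\e)(a^2(\mu,\e)+b(\e))}$, yielding two smooth sheets of solutions $x = \zeta_{(\mu,\e)}^{-1}(\pm\sqrt{\cdots})$ defined for all $\mu\in(-\mu_2,\mu_2)$ and bounded uniformly away from one another. For $\e\in(-\e_2,0)$: writing $\beta(\e) = -b(\e)>0$, the equation $\zeta^2 = -S\bigl(a^2-\beta\bigr)$ has real solutions only when $a^2(\mu,\e)\geq \beta(\e)$. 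Defining $\mu_c(\e):=a_\e^{-1}\bigl(-\sqrt{\beta(\e)}\bigr)$ and $\mu_e(\e):=a_\e^{-1}\bigl(\sqrt{\beta(\e)}\bigr)$, smoothness of $a$ and of $b$ (the latter with $b(\e)\to 0$ as $\e\to 0^-$) together with $a_0^{-1}(0)=0$ gives continuity of $\mu_c,\mu_e$ on $(-\e_2,0)$, which I extend continuously by $\mu_c(\e)=\mu_e(\e)=0$ on $[0,\e_2)$. Outside $[\mu_c(\e),\mu_e(\e)]$ there are exactly two fixed points in $(-x_2,x_2)$; inside there are none; and at the endpoints the two branches collide into one with $\zeta=0$.

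To upgrade these statements into genuine fold catastrophes for the map $\Pi=x+\e^\ell\Delta_\ell$, I will verify the usual fold genericity conditions of \cite{Kuznetsov2023} at every candidate nonhyperbolic point. Direct differentiation of the canonical form gives, at any zero with $\zeta\neq0$, $\partial_x\Delta_\ell = 2Q\zeta\,\partial_x\zeta \neq 0$ (so $\partial_x\Pi\neq1$ and the fixed point is hyperbolic), and at a zero with $\zeta=0$ (which forces $a^2+b=0$, hence $a\neq 0$), the three quantities $\partial_x\Delta_\ell=0$, $\partial_{xx}\Delta_\ell = 2Q(\partial_x\zeta)^2\neq 0$, and $\partial_\mu\Delta_\ell = 2QSa\,\partial_\mu a\neq 0$ all follow from $Q,S\neq 0$ together with the fact that $\zeta_{(\mu,\e)}$ and $a_\e$ are diffeomorphisms. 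This establishes both the fold-type collision at $\mu_c(\e)$ and fold-type emergence at $\mu_e(\e)$, completing \cref{propertytransbreakagea}, and simultaneously the hyperbolicity in \cref{propertytransbreakageb}.

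The main obstacle is really bookkeeping rather than any single hard step: I must shrink $x_2,\mu_2,\e_2$ in the right order so that \emph{all} fixed points of $\Pi$ in $(-x_2,x_2)\times(-\mu_2,\mu_2)\times(-\e_2,\e_2)$ come from the canonical form (not merely existence of the branches I describe), so that the ranges of $\zeta_{(\mu,\e)}^{-1}$ and $a_\e^{-1}$ remain inside the prescribed neighbourhoods, and so that $\mu_c(\e),\mu_e(\e)$ lie strictly inside $(-\mu_2,\mu_2)$ uniformly in $\e$. Once these containments are arranged from the diffeomorphism properties and from $b(\e)\to 0$ as $\e\to 0$, the analysis of the explicit equation $\zeta^2 = -S(a^2+b)$ delivers the complete bifurcation picture claimed.
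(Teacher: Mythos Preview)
Your proposal is correct and follows essentially the same route as the paper: both reduce, via the canonical form of \cref{propositiontransn1}, to studying the real solutions of $\zeta^2=-S(\mu,\e)\bigl(a^2(\mu,\e)+b(\e)\bigr)$, both deduce $\sgn b'(0)=\sigma\sigma'=1$ from the identity $\partial_\e\Delta_\ell(0,0,0)=Q(0,0,0)S(0,0)b'(0)$, and both define $\mu_c(\e),\mu_e(\e)$ as $a_\e^{-1}\bigl(\pm\sqrt{|b(\e)|}\bigr)$.

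The only substantive difference is in how the word ``fold-like'' is justified. The paper argues purely by counting simple versus double roots of the quadratic $z^2=a^2(\mu,\e)+b(\e)$, together with a monotonicity analysis of $c_\e(\mu)=a^2(\mu,\e)-|b(\e)|$ to locate the sign changes. You instead differentiate the canonical form directly and check the Kuznetsov genericity conditions $\partial_x\Delta_\ell=0$, $\partial_{xx}\Delta_\ell=2Q(\partial_x\zeta)^2\neq0$, $\partial_\mu\Delta_\ell=2QSa\,\partial_\mu a\neq0$ at the degenerate points. Your route is slightly more explicit about hyperbolicity away from $\zeta=0$ and about the nondegeneracy of the fold; the paper's route is more elementary but leaves the connection between ``double root'' and ``nonhyperbolic fold point of $\Pi$'' implicit. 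Either argument closes the proof.
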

\begin{proof}
	Take $\tilde{\e}_1:=\e_1$, $\tilde{x}_1,\tilde{\mu}_1>0$ such that $(-\tilde{x}_1,\tilde{x}_1) \subset I$, and $[-\tilde{\mu}_1,\tilde{\mu}_1] \subset U_\Sigma$ and define $W_1 = (-\tilde{x}_1,\tilde{x}_1) \times (-\tilde{\mu}_1,\tilde{\mu}_1) \times (-\tilde{\e}_1,\tilde{\e}_1)$. In that case, \cref{propertytransmain} ensures that
	\begin{equation}\label{eq:transcriticalcanonicalbreakage}
		\Delta_\ell(x,\mu,\e) = Q(x,\mu,\e)\left( \zeta^2(x,\mu,\e) + S(\mu,\e) a^2(\mu,\e)  + S(\mu,\e) b(\e) \right),
	\end{equation}
	for $(x,\mu,\e) \in W_1$.
	
	Let $\Lambda:(-\tilde{\mu}_1,\tilde{\mu}_1) \times (-\tilde{\e}_1,\tilde{\e}_1) \to \R^2$ be given by $\Lambda(\mu,\e) = (a(\mu,\e),\e)$. Since $a_\e$ is a diffeomorphism on $U_\Sigma$ for $ \e \in (-\tilde{\e}_1,\tilde{\e}_1)$, it follows that $\Lambda$ is a diffeomorphism onto its image. Considering that $a(0,0)=0$, $E_\Lambda: = \Im \Lambda$ is an open set containing $(0,0) \in \R^2$. Thus, there is a basic open neighbourhood of the origin $(-\tilde{a},\tilde{a}) \times (-\tilde{\e}_2,\tilde{\e}_2)\subset E_\Lambda$. This means that $(-\tilde{a},\tilde{a}) \subset \Im a_\e$ for any $\e \in  (-\tilde{\e}_2,\tilde{\e}_2)$. Since $b(0)=0$ and $b$ is smooth, we can take $\tilde{\e}_3 \in (0,\tilde{\e}_2)$ such that $\sqrt{|b(\e)|}<\tilde{a}$ for any $\e \in (-\tilde{\e}_3,\tilde{\e}_3)$. This ensures that $a_\e^{-1} \left(\pm \sqrt{|b(\e)|}\right)$ is well defined for $\e \in (-\tilde{\e}_3,\tilde{\e}_3)$. Hence, we can define 
	\begin{equation}
		\mu_c(\e):=a_\e^{-1} \left(-\sqrt{|b(\e)|}\right) \quad \text{and} \quad \mu_e(\e):= a_\e^{-1} \left(\sqrt{|b(\e)|}\right),
	\end{equation}
	both clearly continuous on $(-\tilde{\e}_3,\tilde{\e}_3)$ and whose image lies in $(-\tilde{\mu}_1,\tilde{\mu}_1)$.
	
	Proceeding as in the proof of \cref{theoremtranscriticalcatastrophe}, we obtain
	\begin{equation}
		b'(0) =  \sigma \frac{g_{\ell+1}(0,0)}{|Q(0,0,0) S(0,0)|},
	\end{equation}
	which does not vanish by hypothesis. Hence, there is $\tilde{\e}_4 \in (0,\tilde{\e}_3)$ sufficiently small such that $\sgn(b(\e)) = \sigma  \sigma'$ for $\e \in (0,\tilde{\e}_4)$ and $\sgn(b(\e))=-\sigma  \sigma'$ for $\e \in (-\tilde{\e}_4,0)$.
	Henceforth in the proof, we assume, without loss of generality, that $\sigma  \sigma' =1$. The other case can be treated analogously and will be omitted for the sake of brevity.
	
	Now, from \cref{eq:transcriticalcanonicalbreakage}, it follows that, for $(x,\mu,\e) \in W_1$, it holds that $\Delta_\ell(x,\mu,\e)=0$ if, and only if,
	\begin{equation}
		\left(\frac{\zeta(x,\mu,\e)}{\sqrt{-S(\mu,\e)}}\right)^2 = a^2 (\mu,\e) + b(\e).
	\end{equation}
	We will, therefore, study how many roots of the polynomial $z^2 = a^2(\mu,\e) +b(\e)$ exist near zero for each $(\mu,\e) \in (-\tilde{\mu}_1,\tilde{\mu}_1) \times (-\tilde{\e}_4,\tilde{\e}_4)$, since they can then be converted via inverse function to values of $x$ satisfying $\Delta_\ell(x,\mu,\e)=0$.

	We first study \cref{propertytransbreakagea}, that is, the case $\e \in (-\e_4,0)$, for which the polynomial equation can be rewritten as $z^2 = a^2(\mu,\e) + |b(\e)|$. Considering that $|b(\e)|>0$ for any $\e \in (-\tilde{\e}_4,0)$, it is easy to see that this equation has exactly two simple roots for $(\mu,\e) \in (-\tilde{\mu}_1,\tilde{\mu}_1) \times (-\tilde{\e}_4,0)$.
	
	Now, we consider \cref{propertytransbreakageb}, that is, the case $\e \in (0,\tilde{\e}_4)$, for which the polynomial equation can be rewritten as
	\begin{equation}
		z^2 = a^2(\mu,\e) - |b(\e)|.
	\end{equation}
	It is thus clear that this equation will have two simple real roots if $a^2(\mu,\e)>|b(\e)|$, one double real root if $a^2(\mu,\e)=|b(\e)|$ and no real roots if $a^2(\mu,\e)<|b(\e)|$. In other words, the number of roots depends solely on the sign of the function 
	\begin{equation}
		c_\e(\mu)= a^2(\mu,\e)-|b(\e)|.
	\end{equation} 
	There are, for each $\e \in(0,\tilde{\e}_4)$, exactly two values of $\mu \in (-\tilde{\mu}_1,\tilde{\mu}_1)$ for which $c_\e(\mu)=a^2(\mu,\e)-|b(\e)| = 0$, namely $\mu_c(\e)=a_\e^{-1} (-\sqrt{|b(\e)|})$ and $\mu_e(\e)=a_\e^{-1} (\sqrt{|b(\e)|})$. We proceed by studying the sign of the $c_\e(\mu)$ for $\mu \in (-\tilde{\mu}_1,\tilde{\mu}_1)$. 
	
	To do so, assume first that $a'_0(0)>0$. Since $a_\e$ is a diffeomorphism on $U_\Sigma$ for any $\e \in (-\tilde{\e}_1,\tilde{\e}_1)$, smoothness of $a$ ensures that $a'_\e(0)>0$ for any $\e \in (-\tilde{\e}_1,\tilde{\e}_1).$ For the same reason, we obtain that $a'_\e(\mu)>0$ for any $(\mu,\e) \in U_\Sigma \times (-\tilde{\e}_1,\tilde{\e}_1)$. Hence, since $[-\tilde{\mu}_1,\tilde{\mu}_1] \subset U_\Sigma$ and $[-\tilde{\e}_3,\tilde{\e}_3] \subset (-\tilde{\e}_1,\tilde{\e}_1)$, it follows that
	\begin{equation}
		m:=\inf \{a'_\e(\mu): (\mu,\e) \in [-\tilde{\mu}_1,\tilde{\mu}_1] \times (-\tilde{\e}_3,\tilde{\e}_3) \} >0.
	\end{equation}
	Moreover, considering that $(-\tilde{\mu}_1,\tilde{\mu}_1) \subset [-\tilde{\mu}_1,\tilde{\mu}_1]$ and that $(-\tilde{\e}_4,\tilde{\e}_4) \subset [-\tilde{\e}_3,\tilde{\e}_3]$, we get
	\begin{equation}
		\inf \{a'_\e(\mu): (\mu,\e) \in (-\tilde{\mu}_1,\tilde{\mu}_1) \times (-\tilde{\e}_4,\tilde{\e}_4) \} \geq m >0.
	\end{equation}
	This means that $a_\e$, and consequently also its inverse, is an strictly increasing function on $(-\tilde{\mu}_1,\tilde{\mu}_1)$, which will allow us to fully understand the sign of $c_\e$.
	
	Firstly, since $-\sqrt{|b(\e)|} <0 < \sqrt{|b(\e)|}$ and $a_\e^{-1}$ is increasing for any $\e \in (0,\tilde{\e}_4)$, it follows that
	\begin{equation}
		\mu_c(\e)=a_\e^{-1} \left(-\sqrt{|b(\e)|}\right)<a_\e^{-1}(0) < a_\e^{-1} \left(\sqrt{|b(\e)|}\right) = \mu_e(\e).
	\end{equation}
	Thus, since $a_\e$ is also increasing for any $\e \in (0,\tilde{\e}_4)$, we obtain
	\begin{equation}
		a_\e(\mu_c(\e)) < 0 < a_\e(\mu_e(\e)).
	\end{equation}
	Therefore, considering that $c_\e'(\mu) = 2a'_\e(\mu) a_\e(\mu)$ and that $a'_\e>0$, we conclude that
	\begin{equation}
		c_\e'(\mu_c(\e)) <0 < c_\e'(\mu_e(\e)),
	\end{equation}
	for any $\e \in (0,\tilde{\e}_4)$.
	
	We have thus proved that, if $\mu \in (0,\tilde{\e}_4)$, then $c_\e(\mu)>0$ for $\mu \in (-\tilde{\mu}_1, \mu_c(\e)) \cup (\mu_e(\e),\tilde{\mu}_1)$ and $c_\e(\mu)<0$ for $\mu \in (\mu_c(\e),\mu_e(\e))$. As mentioned before, this suffices to prove \cref{propertytransbreakageb}.
	
\end{proof}

%%%%%%%%%%%%%%%%%%%%%%%%%%%%%%%%%%%%%%%%%%%%%%%%%%%%%%%%%%%%
%%%%%%%%%%%%%%%%%%%%%%%%%%%%%%%%%%%%%%%%%%%%%%%%%%%%%%%%%%%%
\subsection{Proof of stabilisation of non-stable families: the pitchfork case}

The pitchfork bifurcation for flows is a 1-parameter family of 1-dimensional vector fields exhibiting the emergence of three equilibria from one persistent one, with a normal form $\dot x = \mu x + x^3$. If this family is perturbed, this behaviour is generally lost, unless some symmetry is assumed for the perturbation term, and the pitchfork bifurcation appears generically of families with symmetry (the so-called $\mathbb{Z}_2$-equivariant systems - see \cite[Section 7.4.2]{Kuznetsov2023}, for instance).

Similar to \Cref{sec:nonstabtran}, we begin with a definition of the pitchfork bifurcation based on the concept of $\mathcal{K}$-equivalence, before considering what happens when it occurs in a guiding system.
%In this section, we analyse the pitchfork bifurcation in a similar way to what we have done for the transcritical.

\begin{definition} \label{def:pitchforkn1}
	A 1-parameter family of $1$-dimensional vector fields $F(x,\mu)$ is said to undergo a pitchfork bifurcation at the origin for $\mu=0$ if
	\begin{enumerate}
		\item The germ of $f: x \mapsto F(x,0)$ at the origin is $\mathcal{K}$-equivalent to the germ of $s_{1^2,0}(x)=x^3$.
		\item  Let $([M],[\phi]) \in GL_n(\mathcal{E}_n)\times L_n$ be such that $[f]=[M] \cdot [s_{1^2,0}] \circ [\phi]$. The pushforward $([M],[\phi]) *[\tilde{\mathcal{U}}]$ of the unfolding $[\mathcal{U}] \in \mathcal{Z}_{1,1}^1$, given by $\tilde{\mathcal{U}}(x,\mu)=(\mathcal{U}(x,\mu),\mu)$ and $\mathcal{U}(x,\mu)=\mu x + x^3$, is $\mathcal{K}$-equivalent to $ [\tilde{F}]$ via the identity, where  $\tilde{F}(x,\mu)=(F(x,\mu),\mu)$.
	\end{enumerate}
\end{definition}

\subsubsection{The canonical form of the displacement function}
\begin{proposition} \label{propositionpitchn1}
	Let $n=k=1$ and suppose that the guiding system $\dot x = g_\ell (x,\mu)$ undergoes a pitchfork bifurcation at the origin for $\mu=0$.
	Then, there are $\e_1 \in (0,\e_0)$, an open interval $I$ containing $0 \in \R$, an open neighbourhood $U_\Sigma \subset \Sigma$ of $0$, and smooth functions $\zeta,Q: I\times U_\Sigma \times (-\e_1,\e_1) \to \R$, $a: U_\Sigma \times (-\e_1,\e_1) \to \R$, and $b:(-\e_1,\e_1) \to \R$ such that
	\begin{enumerate}[label=(P.\Roman*)]
		\item \label{propertypitchmain} If $\Delta_\ell$ is the displacement function of order $\ell$ of \cref{eq:standardsystemintro}, then 
		\begin{align*}
			\Delta_\ell(x,\mu,\e) = Q(x,\mu,\e)\left( \zeta^3(x,\mu,\e) + a(\mu,\e) \zeta(x,\mu,\e)  + b(\mu,\e) \right)
		\end{align*}
		for $(x,\mu,\e) \in I \times U_\Sigma \times (-\e_1,\e_1)$.
		\item \label{propertypitchphi} For each $(\mu,\e) \in U_\Sigma \times (-\e_1,\e_1)$, the map $\zeta_{(\mu,\e)}:x \mapsto \zeta(x,\mu,\e)$ is a diffeomorphism on the interval $I$.
		\item \label{propertypitcha} For each $\e \in(-\e_1,\e_1)$, $a_\e: \mu \mapsto a(\mu,\e)$ is a diffeomorphism on $U_\Sigma$.
		\item \label{propertypitcheq}$b(0,0)= \frac{\partial b}{\partial \mu}(0,0)=0$, $a(0,0) =0$, $\zeta(0,0,0)=0$, and $Q(0,0,0) \neq 0$.
	\end{enumerate}  
\end{proposition}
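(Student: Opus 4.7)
The plan is to mirror the proof of \cref{propositiontransn1}, replacing the $\mathcal{K}$-universal unfolding of $x^2$ (the fold $\mathcal{U}(y, \eta) = y^2 + \eta$) with the $\mathcal{K}$-universal unfolding of $x^3$, namely the cusp $\tilde{H}(y, \eta_1, \eta_2) = (y^3 + \eta_1 y + \eta_2, \eta_1, \eta_2)$. A key simplification relative to the transcritical argument is that the codimension of $s_{1^2,0}$ equals $2$, matching exactly the number of parameters $(\mu, \e)$ available in $\Delta_\ell$, so a single application of versality produces the entire canonical form; there is no need for the second iteration used in the transcritical case to decompose a residual one-variable ``$h$'' function.

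First I set up the canonical form. By the pitchfork hypothesis, the germ $[f]$ with $f(x) = \Delta_\ell(x, 0, 0) = T g_\ell(x, 0)$ is $\mathcal{K}$-equivalent to $[s_{1^2, 0}]$, so I fix $([M], [\phi]) \in GL_1(\mathcal{E}_1) \times L_1$ with $[f] = [M] \cdot [s_{1^2, 0}] \circ [\phi]$. Then $([M], [\phi]) * [\tilde{H}]$ is a $\mathcal{K}$-versal unfolding of $[f]$. Since $[\tilde{F}](x, \mu, \e) = (\Delta_\ell(x, \mu, \e), \mu, \e)$ is a $2$-parameter unfolding of $[f]$, \cref{definitioncontactversal} yields an induction germ $[h] = ([h_1], [h_2])$ together with smooth $Q, \zeta$ near the origin such that
\begin{equation*}
	\Delta_\ell(x, \mu, \e) = Q(x, \mu, \e)\bigl[\zeta^3(x, \mu, \e) + h_1(\mu, \e)\, \zeta(x, \mu, \e) + h_2(\mu, \e)\bigr],
\end{equation*}
with $Q(x, 0, 0) = M(x)$, $\zeta(x, 0, 0) = \phi(x)$, and $h_1(0, 0) = h_2(0, 0) = 0$. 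Setting $a := h_1$ and $b := h_2$ gives (P.I); property (P.II) follows by shrinking the domain using that $\zeta_{(0, 0)} = \phi$ is a local diffeomorphism; and the initial-value parts of (P.IV)---namely $Q(0, 0, 0) = M(0) \neq 0$, $\zeta(0, 0, 0) = 0$, $a(0, 0) = b(0, 0) = 0$---are immediate from the equalities above.

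The nontrivial content is establishing $\frac{\partial b}{\partial \mu}(0, 0) = 0$ in (P.IV) and $\frac{\partial a}{\partial \mu}(0, 0) \neq 0$ in (P.III). For this I exploit the pushforward form of the pitchfork guaranteed by \cref{def:pitchforkn1}: there exist smooth $Q_0, \alpha$ with $Q_0(x, 0) = 1$ and $\alpha(x, 0) = x$ such that, after harmlessly rescaling $M$ by $T$ so that the two $\mathcal{K}$-equivalences of $[f]$ agree,
\begin{equation*}
	g_\ell(x, \mu) = Q_0(x, \mu)\, M(\alpha(x, \mu)) \bigl[\mu\, \phi(\alpha(x, \mu)) + \phi^3(\alpha(x, \mu))\bigr].
\end{equation*}
Since $\phi(0) = 0$, direct differentiation at the origin gives $\frac{\partial g_\ell}{\partial \mu}(0, 0) = 0$ and $\frac{\partial^2 g_\ell}{\partial x \partial \mu}(0, 0) \neq 0$ (the latter being proportional to $M(0)\phi'(0)$).

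Comparing with the canonical form at $\e = 0$ closes the proof. Differentiating the identity $T g_\ell(x, \mu) = Q(x, \mu, 0)\bigl[\zeta^3 + a\zeta + b\bigr]$ once with respect to $\mu$ at $(x, \mu) = (0, 0)$, every product-rule term that carries a factor of $\zeta$, $a$, or $b$ is killed by the initial values, leaving $T \frac{\partial g_\ell}{\partial \mu}(0, 0) = Q(0, 0, 0)\, \frac{\partial b}{\partial \mu}(0, 0)$, which forces $\frac{\partial b}{\partial \mu}(0, 0) = 0$. Differentiating once more with respect to $x$ and using the freshly established $b'(0) = 0$ to kill still further terms collapses the expression to $T \frac{\partial^2 g_\ell}{\partial x \partial \mu}(0, 0) = Q(0, 0, 0)\, \frac{\partial a}{\partial \mu}(0, 0)\, \frac{\partial \zeta}{\partial x}(0, 0, 0)$, which in turn forces $\frac{\partial a}{\partial \mu}(0, 0) \neq 0$. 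Shrinking $U_\Sigma$ and $\e_1$ then ensures $a_\e$ is a diffeomorphism on $U_\Sigma$ for each $\e \in (-\e_1, \e_1)$, completing (P.III). The main obstacle is the bookkeeping in these two Taylor expansions---carefully tracking which mixed product-rule terms survive the cascading vanishing conditions at each stage---a routine but error-prone computation whose cleanness depends crucially on the specific linear-in-$\mu$ structure of the pitchfork normal form inside the pushforward, without which one could not simultaneously secure $\frac{\partial g_\ell}{\partial \mu}(0, 0) = 0$ and $\frac{\partial^2 g_\ell}{\partial x \partial \mu}(0, 0) \neq 0$.
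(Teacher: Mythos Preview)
Your proposal is correct and follows essentially the same route as the paper: both use $\mathcal{K}$-versality of the cusp unfolding of $s_{1^2,0}$ to induce the canonical form \ref{propertypitchmain}, then differentiate the identity obtained by specialising to $\e=0$ (where $\Delta_\ell(x,\mu,0)=Tg_\ell(x,\mu)$ carries the pitchfork normal form) to extract $\partial b/\partial\mu(0,0)=0$ and $\partial a/\partial\mu(0,0)\neq 0$. The only cosmetic difference is that the paper equates the two representations of $\Delta_\ell(x,\mu,0)$ in a single combined identity and differentiates that, obtaining $\partial a/\partial\mu(0,0)=\phi'(0)$ directly, whereas you first read off $\partial g_\ell/\partial\mu(0,0)=0$ and $\partial^2 g_\ell/\partial x\partial\mu(0,0)\neq 0$ from the pitchfork side and then match these against derivatives of the canonical form; the computations are the same.
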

\begin{proof}
	Observe that $\Delta_\ell(x,\mu,0) = T g_\ell(x,\mu)$, by definition. Let $[s_{1^2,0}]$ be as in \cref{def:pitchforkn1} and $[\tilde{F}]$ be the 2-parameter unfolding of $f: x \mapsto Tg_\ell(x,0)$ given by $\tilde{F}(x,\mu,\e) = (\Delta_\ell(x,\mu,\e),\mu,\e)$. Since $[g_\ell]$ undergoes a pitchfork bifurcation, there are $P(x,\mu) \in \R$ and $\psi(x,\mu) \in \R$ such that
	\begin{equation}\label{eq:pitchforkcanonical1}
		\Delta_\ell(x,\mu,0) = P(x,\mu) \left(\mu \psi(x,\mu) + \psi^3(x,\mu)\right)
	\end{equation}
	and, defining $M(x):=P(x,0)$, and $\phi(x):=\psi(x,0)$, it holds that $[f] = [M] \cdot [s_{1,0}] \circ [\phi]$. 
	
	Let $\tilde{H}(x,\theta,\eta)=(y^3+\theta y +\eta,\theta,\eta) \in \R \times \R \times \R$. Since the 2-parameter unfolding $[\tilde{H}]$ of $[s_{1^2,0}]$ is $\mathcal{K}$-versal, then $[\tilde{F}]$ must be $\mathcal{K}$-induced by $([M],[\phi]) * [\tilde{H}]$. Therefore, there is a neighbourhood $\tilde{V}_1 : = I \times (-\tilde{\mu}_1,\tilde{\mu}_1) \times (-\tilde{\e}_1,\tilde{\e}_1)$ of the origin in $\R^{1+1+1}$ and smooth functions $h_(\mu,\e) = (a(\mu,\e),b(\mu,\e)) \in \R^2$, $Q(x,\mu,\e) \in \R$, and $\zeta(x,\mu,\e) \in \R $ such that $h(0,0) = (0,0)$, $Q(x,0,0) = M(x) \neq 0$, $\zeta(x,0,0) = \phi(x)$, and
	\begin{equation}\label{eq:pitchforkcanonical2}
		\Delta_\ell(x,\mu,\e) = Q(x,\mu,\e) \cdot \left(\zeta^3(x,\mu,\e)+a(\mu,\e) \zeta(x,\mu,\e) + b(\mu,\e)\right).
	\end{equation}
	Considering that $\zeta(x,0,0)=\phi(x)$ is a local diffeomorphism, if we assume that $\tilde{\mu}_1$ and $\tilde{\e}_1$ are sufficiently small, we can ensure that $\zeta_{(\mu,\e)}:x \mapsto \zeta(x,\mu,\e)$ is a diffeomorphism on $I$ for any $(\mu,\e) \in (-\tilde{\mu}_1,\tilde{\mu}_1) \times (-\tilde{\e}_1,\tilde{\e}_1)$.
	
	Combining \cref{eq:pitchforkcanonical1,eq:pitchforkcanonical2}, we have
	\begin{equation} \label{eq:pitchforkcanonicalmain}
		P(x,\mu) \left(\mu \psi(x,\mu) + \psi^3(x,\mu)\right) = Q(x,\mu,0) \cdot \left(\zeta^3(x,\mu,0)+a(\mu,0) \zeta(x,\mu,0) + b(\mu,0)\right).
	\end{equation}
	
	Differentiating both sides of \cref{eq:pitchforkcanonicalmain} with respect to $\mu$ at the origin and considering that $\psi(0,0) = \zeta(0,0,0)= \phi(0) = 0$, it follows that
	\begin{equation}\label{eq:pitchforkcanonicaldhdmu}
		\frac{\partial b}{\partial \mu}(0,0) = 0.
	\end{equation}
	
	Now, differentiating both sides of \cref{eq:pitchforkcanonicalmain}, once with respect to $x$ and once with respect to $\mu$, at the origin and considering that $M(0)$ is invertible, we obtain
	\begin{equation}\label{eq:pitchforkcanonicaldmux}
		\frac{\partial a}{\partial \mu}(0,0) = \frac{\partial \psi}{\partial x}(0,0) = \phi'(0) \neq 0.
	\end{equation}
	We can assume that $\tilde{\mu}_2$ and $\tilde{\e}_2$ are sufficiently small as to ensure that $\frac{\partial a}{\partial \mu}(\mu,\e)\neq0$ for any $(\mu,\e) \in (-\tilde{\mu}_2,\tilde{\mu}_2) \times (-\tilde{\e}_2,\tilde{\e}_2)$, guaranteeing that $a_\e$ is a diffeomorphism.
\end{proof}

\subsubsection{Proof of \cref{theorempitchforkcatastrophe}}

By definition of $\Delta_\ell$, it is easy to see that 
\begin{equation}
	\frac{\partial \Delta_\ell}{\partial \e}(0,0,0) = g_{\ell+1}(0,0),
\end{equation}
which is does not vanish. Let $V : = I \times U_\Sigma \times (-\e_1,\e_1)$ as given in \cref{propositionpitchn1}. Then, \cref{propertypitchmain} ensures that 
\begin{equation}
	\frac{\partial \Delta_\ell}{\partial \e}(0,0,0) = Q(0,0,0) b'(0).
\end{equation}
Thus, considering \cref{propertypitcheq}, we obtain
\begin{equation}\label{eq:pitchforkproof1}
	\frac{\partial b}{\partial \e}(0,0) =  \frac{g_{\ell+1}(0,0)}{Q(0,0,0)}.
\end{equation}

\Cref{propertypitchmain} yields that $\Delta_\ell(x,\mu,\e)=0$ is equivalent to 
\begin{equation}\label{eq:pitchforkdeltazero}
	b(\mu,\e) =  -\zeta^3(x,\mu,\e) - a (\mu,\e) \zeta(x,\mu,\e)
\end{equation}
in $V$. Define $\Psi(x,\mu,\e) = (\Psi_1(x,\mu,\e),\Psi_2(\mu,\e),\Psi_3(\mu,\e))$ by
\begin{equation}
	\Psi_1(x,\mu,\e) = \zeta(x,\mu,\e), \quad \Psi_2(\mu,\e) = a(\mu,\e), \quad \Psi_3(\mu,\e) = b(\mu,\e),
\end{equation}
a weakly-fibred diffeomorphism onto its image $U$. We remark that, since $\frac{\partial b}{\partial \mu}(0,0)=0$ by \cref{propertypitcheq}, we also know that $\Psi$ is strongly-fibred to the first order at the origin. Moreover, \cref{eq:pitchforkdeltazero} is equivalent to
\begin{equation}
	\left(\Psi_1(x,\mu,\e)\right)^3 + \Psi_2(\mu,\e) \Psi_1(x,\mu,\e) + \Psi_3(\mu,\e) = 0. 
\end{equation}

Therefore, $\Delta_\ell(x,\mu,\e) =0 \iff \Psi(x,\mu,\e) \in \left\{(y,\theta,\eta) \in \R^3:y^3-\theta y + \eta =0\right\}$. Defining $\Phi = \Psi^{-1}$, the proof is concluded.

\subsection{Proof of \cref{theoremsaddlenode}}
	Let $\Phi$ be as in \cref{maintheorem} and $\e\neq 0 $ be small enough so that $\e' = \Phi^{-1}_3(\e)$ is well defined. By \cref{maintheorem}, the points $(x,\mu)$ near $(0,0)$ for which $\Pi(x,\mu,\e)=x$ are given by $(\Phi_1(\alpha(t),\eta(t),\e'),\Phi_2(\eta(t),\e'))$, where $(\alpha(t),\eta(t))$ are a local parametrisation near $(0,0)$ of the curve given by $g_\ell(\alpha,\eta)=0$. Considering the Implicit Function Theorem and \cref{saddlenode1,saddlenode2}, we can assume that $\alpha(t)=t$.
	
	Thus, since $g_\ell(t,\eta(t))=0$, by differentiating with respect to $t$ at $t=0$, we obtain
	\begin{equation}
		\frac{\partial g_\ell}{\partial \mu}(0,0) \eta'(0) =0,
	\end{equation}
	which ensures that $\eta'(0)=0$.
	
	Now, differentiating $\Pi(\Phi_1(t,\eta(t),\e'),\Phi_2(\eta(t),\e'),\e)=\Phi_1(t,\eta(t),\e')$ with respect to $t$ at $t=0$, it follows that
	\begin{equation}
		\frac{\partial \Pi}{\partial x_0}(x^*(\e),\mu^*(\e),\e) = 1,
	\end{equation}
	where $x^*(\e):=\Phi(0,0,\e')$ and $\mu^*(\e):=\Phi_2(0,\e')$.

	Taking into account \cite[Theorems 4.1 and 4.2]{Kuznetsov2023}, we need only verify the two genericity conditions that guarantee a fold up to topological conjugacy:
	\begin{enumerate}[label=(F\textsubscript{top}\arabic*)]
		\item $\frac{\partial \Pi}{\partial \mu}(x^*(\e),\mu^*(\e),\e)\neq 0$;
		\item $\frac{\partial^2 \Pi}{\partial x^2}(x^*(\e),\mu^*(\e),\e) \neq 0 $.
	\end{enumerate}
	These follow directly from smoothness with respect to $\e$, combined with \cref{eq:poincareelldisplacementintro}, the fact that $\Delta_\ell (x,\mu,0) = T g_\ell(x,\mu)$, and \cref{saddlenode1,saddlenode2}.

\section{Acknowledgements}
PCCRP is supported by S\~{a}o Paulo Research Foundation (FAPESP) grants nº 2023/11002-6 and 2020/14232-4. DDN was supported by the São Paulo Research Foundation (FAPESP), Grant No. 2024/15612-6; by the Conselho Nacional de Desenvolvimento Científico e Tecnológico (CNPq), Grant No. 301878/2025-0; and by the Coordenação de Aperfeiçoamento de Pessoal de Nível Superior - Brasil (CAPES), through the MATH-AmSud program, Grant No. 88881.179491/2025-01.

%%%%%%%%%%%%%%%%%%%%%%%%%%%%%%%%%%%%%%%%%%%%%%%%%%%%%%%%%%%%
%%%%%%%%%%%%%%%%%%%%%%%%%%%%%%%%%%%%%%%%%%%%%%%%%%%%%%%%%%%%
\appendix 

\section{Group structure of germs of fibred diffeomorphisms}\label{sec:fibred}
It is known that germs of local diffemorphisms at a point (see \cref{def:germslocaldiff}) have a well defined operation induced by composition. Hence, we assume without loss of generality that the domains and images of the diffeomorphisms are compatible with composition.

The fact that the composition of two fibred diffeomorphisms is still a fibred diffeomorphism, be it strongly or weakly fibred, amounts to simple calculation, and will be omitted here. The only property of groups that has to be non-trivially verified is the existence of an inverse element in the class of local diffeomorphisms with the same fibration, which amounts to proving that the inverse of a fibred diffeomorphism is itself still fibred. 

Let thus $\Phi$ be strongly-fibred and let $\Psi:=\Phi^{-1}$, its inverse diffeomorphism. We wish to prove that $\Psi$ is strongly-fibred as well. we begin by proving that $\Psi_3$ does not depend on $x$ or $\mu$.

To do so, first notice that, since $\Phi$ is diffeomorphism, it follows that, for any $(x,\mu,\e)$ in its domain, 
$\det D\Phi(x,\mu,\e) \neq0$. Considering that 
\begin{equation}
	\Phi(x,\mu,\e)=(\Phi_1(x,\mu,\e),\Phi_2(\mu,\e),\Phi_3(\e)),
\end{equation} 
it follows at once by taking into account the block structure of the matrix $D\Phi(x,\mu,\e)$ that
\begin{equation}\label{eq:appendix1}
	\det \frac{\partial \Phi_1}{\partial x}(x,\mu,\e) \neq 0, \quad 	\det \frac{\partial \Phi_2}{\partial \mu}(\mu,\e) \neq 0, \quad \text{and} \quad \Phi_3'(\e) \neq 0.
\end{equation}

Now differentiate the identity $\Psi_3 (\Phi_1(x,\mu,\e),\Phi_2(\mu,\e),\Phi_3(\e)) = \e$ with respect to $x$ to obtain
\begin{equation}
	\frac{\partial \Psi_3}{\partial x} (\Phi_1(x,\mu,\e),\Phi_2(\mu,\e),\Phi_3(\e)) \cdot \frac{\partial \Phi_1}{\partial x} (x,\mu,\e) = 0,
\end{equation}
which, combined with \cref{eq:appendix1}, ensures that $\frac{\partial \Psi_3}{\partial x}$ vanishes identically in its domain. 

Differentiating the same identity with respect to $\mu$ and considering that $\frac{\partial \Psi_3}{\partial x}=0$, we obtain 
\begin{equation}
	\frac{\partial \Psi_3}{\partial \mu} (\Phi_1(x,\mu,\e),\Phi_2(\mu,\e),\Phi_3(\e)) \cdot \frac{\partial \Phi_2}{\partial \mu} (\mu,\e) = 0 ,
\end{equation}
now ensuring that $\frac{\partial \Psi_3}{\partial x}$ vanishes identically in its domain. Therefore, $\Psi_3$ depends solely on $\e$, as we wished to prove. 

Finally, differentiating $\Psi_2 (\Phi_1(x,\mu,\e),\Phi_2(\mu,\e),\Phi_3(\e)) = \mu$ with respect to $x$, it follows that
\begin{equation}
	\frac{\partial \Psi_2}{\partial x} (\Phi_1(x,\mu,\e),\Phi_2(\mu,\e),\Phi_3(\e)) \cdot \frac{\partial \Phi_1}{\partial x} (x,\mu,\e) = 0,
\end{equation}
which proves that $\Psi_2$ is independent of $x$, finishing the proof for the strongly-fibred case.

The weakly-fibred case is analogous, and so will be omitted.

\bibliographystyle{plain}
\bibliography{newrefs}

\end{document}